\documentclass[11pt,reqno]{amsart}
\usepackage{ytableau,tikz,hyperref,amsthm}
\usepackage{mathtools}
\usepackage{palatino}
\usepackage[left=3cm,right=3cm,top=3cm,bottom=3cm]{geometry}
\usepackage[shortlabels,inline]{enumitem}
\newcommand\characx{\mathbf{x}}
\newcommand{\key}{\kappa}
\newtheorem{theorem}{Theorem}
\newtheorem{remark}{Remark}
\newcommand\bremark{\begin{remark}\begin{upshape}}
\newcommand\eremark{\end{upshape}\end{remark}}
\newtheorem{proposition}{Proposition}
\newtheorem{corollary}{Corollary}

\newtheorem{definition}{Definition}
\newtheorem{example}{Example}
\DeclareMathOperator{\CT}{CT}
\DeclareMathOperator{\row}{row}
\DeclareMathOperator{\Tab}{Tab}
\DeclareMathOperator{\Mat}{Mat}
\DeclareMathOperator{\SVT}{SVT}
\DeclareMathOperator{\RPP}{RPP}
\DeclareMathOperator{\SVRPP}{SVRPP}
\DeclareMathOperator{\ceq}{ceq}
\DeclareMathOperator{\wt}{wt}
\DeclareMathOperator{\h}{ht}
\DeclareMathOperator{\ex}{ex}
\DeclareMathOperator{\Read}{rw}
\DeclareMathOperator{\cw}{cw}
\DeclareMathOperator{\rect}{rect}
\DeclareMathOperator{\sh}{shape}
\allowdisplaybreaks
\title[]{Demazure crystals for flagged set-valued reverse plane partitions}
\author[]{Siddheswar Kundu}
\address{Department of Mathematics, Indian Institute of Technology Kanpur, Kanpur 208016, India.}
\email{kundusidhu96@gmail.com}
\keywords{Flagged hybrid Grothendieck polynomials, set-valued reverse plane partitions, key polynomials, Demazure crystals}
\subjclass{05E05}
\begin{document}
\begin{abstract}
In this paper, we build a Demazure crystal structure on the set of all flagged set-valued reverse plane partitions of a given skew shape $\lambda/\mu$ and flag $\Phi$. Our construction provides a unified generalization of the Demazure crystal structures previously known for the set of all flagged semi-standard set-valued tableaux and the set of all flagged reverse plane partitions. Consequently, we obtain a combinatorial expansion for the flagged hybrid Grothendieck polynomial $H_{\lambda/\mu}(\mathbf{x}_{\Phi};\mathbf{t};\mathbf{w})$, introduced by Guo--Kang--Liu, in terms of key polynomials. Applying this expansion, we express the hybrid Grothendieck polynomial $H_{\lambda/\mu}(\mathbf{x};\mathbf{t};\mathbf{w})$ in terms of both the stable Grothendieck polynomials $G_{\nu}(\mathbf{x})$ and the dual stable Grothendieck polynomials $g_{\nu}(\mathbf{x})$. 
\end{abstract}
\maketitle
\section{Introduction}
Lascoux and Schützenberger \cite{Lascoux:G1,Lascoux:G2} introduced Grothendieck polynomials, indexed by permutations in the symmetric group $S_n $, to describe the K-theory ring of the Grassmannian. These polynomials serve as a K-theory analogue of Schubert polynomials. By taking the stable limit of $n \rightarrow \infty $, Grothendieck polynomials yield symmetric functions. In the K-theory of the Grassmannian, the classes of structure sheaves of Schubert varieties are represented by the stable Grothendieck polynomials. We denote the stable Grothendieck polynomial corresponding to the Grassmannian permutation $\pi_{\lambda}$ (\cite[\S2]{Buch-KLR}) by $G_{\lambda}(\mathbf{x})$. Buch \cite{Buch-KLR} provided an explicit combinatorial interpretation of $G_{\lambda}(\mathbf{x})$ as the generating function for semi-standard set-valued tableaux of shape $\lambda$. The dual stable Grothendieck polynomials $g_{\lambda}(\mathbf{x})$ are Hopf-dual to $G_{\lambda}(\mathbf{x})$ with respect to the Hall inner product, and they realize the classes in the K-homology of ideal sheaves of the boundaries of Schubert varieties. Lam and Pylyavskyy have given an explicit combinatorial formula for $g_{\lambda}(\mathbf{x})$ using reverse plane partitions in \cite{Lam}. Chan--Pflueger \cite{Melody-Nathan} introduced the refined skew stable Grothendieck polynomial $G_{\lambda/\mu}(\characx;\mathbf{w}) $ and Galashin--Grinberg--Liu \cite{Galashin:Bender-Knuth} studied the refined dual stable Grothendieck polynomial $\Tilde{g}_{\lambda/\mu}(\characx;\mathbf{t})$. Guo--Kang--Liu introduced the hybrid Grothendieck polynomial $H_{\lambda/\mu}(\characx;\mathbf{t};\mathbf{w})$ in \cite{Hybrid}, which is a common extension of the refined skew stable Grothendieck polynomial $G_{\lambda/\mu}(\characx;\mathbf{w}) $ and the refined dual stable Grothendieck polynomial $\Tilde{g}_{\lambda/\mu}(\characx;\mathbf{t})$. See Section~\ref{Section 2} for all the notations.  

The flagged hybrid Grothendieck polynomial $ H_{\lambda/\mu}(\characx_{\Phi};\mathbf{t};\mathbf{w})$ \cite{Hybrid} is a common generalization of the flagged refined skew stable Grothendieck polynomial $G_{\lambda/\mu}(\characx_{\Phi};\mathbf{w}) $\cite{Sidhu:SVT} and the row-flagged refined dual stable Grothendieck polynomial $\Tilde{g}^{\row(\mathbf{1},\Phi)}_{\lambda/\mu}(\characx;\mathbf{t})$ \cite{Galashin:Bender-Knuth}. Reiner and Shimozono \cite[Theorem~20]{RS} gave an expansion of the flagged skew Schur polynomials $s_{\lambda}(X_{\Phi})$ in the basis of key polynomials. This result was later extended to both $G_{\lambda/\mu}(\characx_{\Phi};\mathbf{w})$ (see \cite[Corollary~2]{Sidhu:SVT}) and $\Tilde{g}^{\row(\mathbf{1},\Phi)}_{\lambda/\mu}(\characx;\mathbf{t})$ (\cite[Corollary~4]{Sidhu:SVT}). The proofs were established by demonstrating that a Demazure crystal structure can be endowed upon the respective indexing sets: the flagged semi-standard set-valued tableaux \cite[Remark~3]{Sidhu:SVT} and the flagged reverse plane partitions \cite[Theorem~1]{Sidhu}.

In this article, we answer two open problems raised by Guo, Kang, and Liu. First, we establish a Demazure crystal structure on the set of flagged set-valued reverse plane partitions, resolving \cite[Problem 6.5]{Hybrid}. As a consequence, we derive the key polynomial expansion of the flagged hybrid Grothendieck polynomial $ H_{\lambda/\mu}(\characx_{\Phi};\mathbf{t};\mathbf{w})$. Applying this result, we express the hybrid Grothendieck polynomial $H_{\lambda/\mu}(\characx;\mathbf{t};\mathbf{w})$ in the basis of both $G_{\nu}$'s and $g_{\nu}$'s, thereby settling \cite[Problem 6.4]{Hybrid}.

This paper is organized as follows. In \S\ref{Section 2}, we review the background on crystals, set-valued reverse plane partitions, and hybrid Grothendieck polynomials, along with the existing crystal structure on set-valued reverse plane partitions. In \S\ref{Section 3}, we establish the main theorem (Theorem~\ref{theorem:main}) and deduce the key polynomial expansion of $H_{\lambda/\mu}(\characx_{\Phi};\mathbf{t};\mathbf{w})$. Finally, in \S\ref{Section 4}, we provide the expansions of $H_{\lambda/\mu}(\characx_{\Phi};\mathbf{t};\mathbf{w})$ in the basis of both $G_{\nu}$'s and $g_{\nu}$'s.
\section{Preliminaries}
\label{Section 2}
We denote the sets of integers, non-negative integers, and positive integers by $\mathbb{Z}, \mathbb{Z}_+, \mathbb{N}$ respectively. Similarly, $\mathbb{Z}^n, \mathbb{Z}_+^n $ represent the sets of $n$-tuple of integers and non-negative integers respectively. Also, for $n \in \mathbb{N}$, we set $[n]:=\{1,2,\dots,n\}$.

A \emph{crystal} of type $A_{n-1}$ consists of a non-empty finite set $\mathcal{B}$ along with the maps $ \wt: \mathcal{B} \rightarrow \mathbb{Z}^{n}$ and
$ e_i,f_i :\mathcal{B} \rightarrow \mathcal{B} \sqcup \{0\}$ for $i \in [n-1] $, where $0 \notin \mathcal{B}$ such that:
\begin{enumerate}
    \item $e_i(x)=y \text{ if and only if } f_i(y)=x$ for all $x,y \in \mathcal{B}$ and $i \in [n-1]$, in which case, $\wt(y)-\wt(x)= \epsilon_i-\epsilon_{i+1},$ where $ \epsilon_i \in \mathbb{Z}^{n}$ denotes the $i$-th standard basis vector.
    \item For $x \in \mathcal{B}, j \in [n-1]$, we define the following maps $\varepsilon_j,\varphi_j: \mathcal{B} \rightarrow \mathbb{Z}_+$   
    $$\varepsilon_j(x)=\max\{k|e_j^{k}x\neq 0\}; \quad 
 \varphi_j(x)=\max\{k|f_j^{k}x\neq 0\}.$$
 Then $\varphi_j(x) - \varepsilon _j (x) = \wt(x) \cdot (\epsilon_j - \epsilon_{j+1})$ $\forall j \in [n-1]$, $ x \in \mathcal{B}$.
\end{enumerate}
We often identify a crystal with its underlying set $\mathcal{B}$. The notion of a crystal presented here corresponds to a seminormal crystal defined by Bump and Schilling \cite[\S2.2]{Bump-Anne}. To any type $A_{n-1}$ crystal $\mathcal{B}$, we associate a directed edge-colored graph called the \emph{crystal graph} of $\mathcal{B}$, whose vertex set is $\mathcal{B}$ and directed edges are given by $x \xrightarrow{\,i\,} y$ if and only if $f_i(x) = y$. 
\begin{itemize}
    \item We call $\mathcal{B}$ is \emph{connected} if its underlying undirected crystal graph is connected.
    \item A subset $\mathcal{B}' \subseteq \mathcal{B}$ formed by a union of connected components of $\mathcal{B}$ inherits a crystal structure from $\mathcal{B}$ and is called a \emph{full subcrystal} of $\mathcal{B}$.
\end{itemize}
\begin{example}
    \label{ex:crystal}
Let $\mathcal{W}_n^m$ be the set of all words of length $m$ with letters in $[n]$. For a word $u=u_1\cdots u_m \in  \mathcal{W}_n^m$, the weight $\wt(u)$ is the $n$-tuple $(q_1,\dots,q_n)$, where $q_i$ counts the number of occurrences of $i$ in $u$.

To define the crystal operators $e_i,f_i : \mathcal{W}_n^m \rightarrow \mathcal{W}_n^m \sqcup \{ 0\}$ for $i \in [n-1] $, we apply the signature rule: consider the subword of $u$ consisting only of the letters $i$ and $i+1$, and then replace each $i$ by $\textcolor{blue}{+}$ and each $i+1$ by $\textcolor{red}{-}$, and iteratively cancel all adjacent $(\textcolor{red}{-}, \textcolor{blue}{+})$ pairs. The uncancelled entries yield a string of the form $\textcolor{blue}{+}\cdots\textcolor{blue}{+}\textcolor{red}{-}\cdots\textcolor{red}{-} $. We then define $e_i,f_i$ as follows:
\begin{itemize}
    \item $e_i(u):$ If the final string contains no uncancelled $\textcolor{red}{-}$, then $e_i(u)=0$. Otherwise, $e_i(u)$ is the word obtained from $u$ by changing the letter $i+1$ corresponding to the leftmost uncancelled $\textcolor{red}{-}$ into $i$.
    \item $f_i(u):$ If the final string contains no uncancelled $\textcolor{blue}{+}$, then $f_i(u)=0$. Otherwise, $f_i(u)$ is the word obtained from $u$ by changing the letter $i$ corresponding to the rightmost uncancelled $\textcolor{blue}{+}$ into $i+1$.
\end{itemize}
Then $\Big( \mathcal{W}_n^m, \wt, \{e_i\}_{i \in [n-1]}
, \{f_i\}_{i \in [n-1]} \Big)$ is a type $A_{n-1}$ crystal.  
\end{example}
An element $b \in \mathcal{B}$ is called a \textit{highest weight} element if $e_i(b)=0$ for $i \in [n-1]$. For example, the word $1321$ a highest weight element of the crystal $\mathcal{W}_3^4$.

Let $\mathcal{A},\mathcal{B}$ be two type $A_{n-1}$ crystals. Then a \emph{crystal morphism} $\psi:\mathcal{A} \rightarrow \mathcal{B} \sqcup \{0\}$ is a map  such that $\forall  a \in \mathcal{A}$ with $\psi(a)\neq 0$ and $\forall i \in [n-1]:$
\begin{enumerate}
\item $\wt(\psi(a))=\wt(a)$,
$\varphi_i(\psi(a))=\varphi_i(a)$ and 
$\varepsilon_i(\psi(a))=\varepsilon_i(a)$;
\item $\psi(e_i a)=e_i\psi(a)$ whenever provided $e_ia \neq 0$ and $\psi(e_ia) \neq 0$;
\item $\psi(f_i a)=f_i\psi(a)$ whenever provided $f_ia \neq 0$ and $\psi(f_ia) \neq 0$.
\end{enumerate}
We say a morphism $\psi$ is \emph{strict} if it commutes with $e_i,f_i$ for $i \in [n-1]$. Furthermore, $\psi$ is called an \emph{embedding} or \emph{isomorphism} if the induced map $\psi : \mathcal{A} \sqcup \{0\} \rightarrow \mathcal{B} \sqcup \{0\}$ with $\psi(0)=0$ is a injective or bijective respectively.

A \emph{partition} $\lambda=(\lambda_1,\lambda_2,\ldots)$ is a weakly decreasing sequence of non-negative integers with finite sum $|\lambda|=\sum_i \lambda_i$. We identify $\lambda$ with its \emph{Young diagram}, which is a collection of top and left justified  boxes such that the $i^{th}$ row has length $\lambda_i$. For two partitions $\lambda, \mu,$ we write $ \mu \subseteq \lambda$ if $\lambda _i \geq  \mu_i $ $\forall i \geq 1$. For $\mu \subseteq \lambda $, the \emph{skew shape} $\lambda / \mu$ is obtained by deleting the boxes of $\mu$ from those of $\lambda$.
\begin{definition}
\begin{upshape}
    \cite[\S1]{Hybrid}
\end{upshape}
A \emph{set-valued reverse plane partition} (SVRPP) of shape $\lambda / \mu$ is a filling $H$ to each box $(i,j) \in \lambda/\mu$ by a non-empty finite subset $H_{i,j}$ of $\mathbb{N}$ such that
\begin{itemize}
    \item $ \max H_{i,j} \leq \min H_{i,j+1}$ whenever $ (i,j),(i,j+1) \in \lambda/\mu$,
    \item $ \max H_{i,j} \leq \min H_{i+1,j}$ whenever $ (i,j),(i+1,j) \in \lambda/\mu$.
\end{itemize}
\end{definition}
\bremark
It is worth noting that a SVRPP specializes to
\begin{enumerate}
    \item a semi-standard set-valued tableau \cite{Buch-KLR} when the column strictness condition holds.
    \item a reverse plane partition \cite{Lam} when every set contains exactly one element.
    \item a semi-standard Young tableau when the column strictness condition holds along with every box contains exactly one element.
\end{enumerate}
\eremark
Let $H$ be a SVRPP of shape $\lambda/\mu$. Then we define the following:
\begin{itemize}
    \item The \emph{weight} of $H$, denoted by $\wt(H) $, defined as the sequence of non-negative integers:
    $$\wt(H)=(l_1,l_2,\dots),$$ where each entry $l_i$ counts the total number of columns in $H$ that contain atleast one set containing the entry $i$.
    \item A box $(i,j) \in \lambda/\mu$ is said to be redundant subject to the following conditions:
    $$ (i+1,j) \in \lambda/\mu, \quad \max H_{i,j} = \min H_{i+1,j}.$$
 Then the \emph{column equalities vector} of $H$ is $\ceq(H) = (c_1,c_2,\dots)$ such that $ c_i$ is the number of redundant boxes in $i^{th}$ row of $H$.
    \item The \emph{excess vector} of $H$ is denoted by $\ex(H)\in \mathbb{Z}_+^n$, whose $i^{th}$ entry is defined by 
    $$\displaystyle\sum_{j:(i,j)\lambda/\mu}\big(|H_{i,j}|-1\big),$$ for each $i \geq 1$.
\end{itemize}
\begin{example}
    \label{ex:SVRPP}
  Consider the set-valued reverse plane partition of shape $(4,3,2,2)/(1)$ below
$$
\begin{tikzpicture}[scale=1.25]
    \draw (0,0)--(0,2.4)--(0.8,2.4)--(0.8,3.2)--(3.2,3.2)--(3.2,2.4)--(2.4,2.4)--(2.4,1.6)--(1.6,1.6)--(1.6,0)--(0,0);
    \draw (0,0.8)--(1.6,0.8);
    \draw (0,1.6)--(1.6,1.6);
    \draw (0.8,2.4)--(2.4,2.4);
    \draw (0.8,0)--(0.8,2.4);
    \draw (1.6,1.6)--(1.6,3.2);
    \draw (2.4,2.4)--(2.4,3.2);
    \node at (0.4,0.4) {$\textcolor{red}{2},3,5 $};
    \node at (1.2,0.4) {$6,7$};
    \node at (0.4,1.2) {$\textcolor{red}{1},2 $};
    \node at (1.2,1.2) {$\textcolor{red}{3},4,5 $};
    \node at (0.4,2) {$1 $};
    \node at (1.2,2-0.025) {$\textcolor{red}{2},3 $};
    \node at (2,2) {$4 $};
    \node at (1.2,2.8) {$1,2 $};
    \node at (2,2.8) {$2,3 $};
    \node at (2.8,2.8+0.025) {$3 $};
\end{tikzpicture}.
$$
Then $\wt(H)=(2,3,4,2,2,1,1)$, $\ceq(H)=(1,2,1,0)$ and $\ex(H)=(2,1,3,3)$.
\end{example}
\begin{definition}
\begin{upshape}
    \cite[\S3.3 \& \S3.4]{Hybrid}
\end{upshape}
The \emph{reading word} $\Read(H)$ of a SVRPP $H$ is the word formed by scanning its rows, from bottom to top, according to the following rule within each row: first, discard $\min(B)$ from each box $B$ in the row and scan the remaining elements from right to left, ordered decreasingly within each box. Then, scan the values $\min(B)$ across the row from left to right, omitting $\min(B)$ whenever it coincides with $\max(B')$ for the box $B'$ immediately above $B$.

The \emph{height vector} $\h(H)$ of $H$ is the string of row positions corresponding to the entries in $\Read(H)$. Explicitly, if $\Read(H)=v_1v_2\cdots$, then $\h(H)=h_1h_2\cdots$ such that $h_i$ is row index of $v_i$ for each $i \geq 1$.
\end{definition}
\begin{example}
   \label{ex:read}
The reading word of $H$ in Example~\ref{ex:SVRPP} is $\Read(H)= 753.6|542|3.14|32.123$. The ignored minimum entries of $H$ are shown in red. We have used vertical lines to separate subwords corresponding to different rows, and dots to separate non-minimal elements from minimal elements. It is clear that $\h(H)=444433322211111$.
\end{example}
\bremark
The reading word proposed here simultaneously extends the reading words of a semi-standard set-valued tableau, and of a reverse plane partition defined in \cite{Morse-Jason:K-bases}. We also note that, for reverse plane partitions, it differs from the reading word defined in \cite{Galashin:LR}, where the entries equal to the entry immediately below it are ignored.
\eremark
\begin{proposition}
\begin{upshape}
    \cite[Proposition 3.12]{Hybrid}
\end{upshape}
    \label{Prop:unique}
    Fix a skew shape $\lambda/\mu$ and sequences $u, h, \beta$. Then there exists at most one SVRPP $H$ of shape $\lambda/\mu$ such that $\Read(H)=u$, $\h(H)=h$ and $\ex(H)=\beta$. 
\end{proposition}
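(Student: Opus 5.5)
The strategy is to reconstruct $H$ row by row from the bottom up, using the data $(u,h,\beta)$, and to show that every choice in the reconstruction is forced. The height vector $h$ splits $u$ into consecutive blocks $u^{(r)}$, one for each row $r$ of $\lambda/\mu$, read from the bottom row up; if $h$ is not weakly decreasing in this sense, no $H$ exists. The shape fixes which boxes $(r,j)$ lie in row $r$, and $\beta_r$ is the total number of non-minimal entries in that row. By the definition of $\Read$, each block has the form $u^{(r)}=a^{(r)}\,b^{(r)}$. Here $a^{(r)}$ lists the non-minimal elements, read right to left and decreasingly inside each box, and it has length exactly $\beta_r$. The word $b^{(r)}$ lists the retained minima from left to right. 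So $\beta$ tells us where to cut each block into its two parts.

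We process rows from the bottom row upward, keeping the invariant that the rows already reconstructed are uniquely determined. Suppose all rows below row $r$ are known, in particular row $r+1$. A box $(r,j)$ with $(r+1,j)\in\lambda/\mu$ is redundant exactly when $\max H_{r,j}=\min H_{r+1,j}$. At that stage, though, $\max H_{r,j}$ is not yet known. So it is cleaner to reverse the roles: the omitted minima in row $r+1$ are those $\min H_{r+1,j}$ equal to $\max H_{r,j}$. This suggests reconstructing top to bottom instead. With the row above known, when reconstructing row $r$ we know $\max H_{r-1,j}$ for every $j$ with $(r-1,j)\in\lambda/\mu$.

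Row $r$ is then rebuilt as follows. The non-minimal parts, read right to left, are the word $a^{(r)}$. Within a box they are strictly decreasing. When we pass to the next box to the left, row weak-increasingness together with the fact that the minimum of the right box lies below its other entries gives a weak descent or an equality across the boundary, so the box boundaries cannot be read off directly from $a^{(r)}$ alone. The minima must resolve this.

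The minima are determined column by column from left to right using $b^{(r)}$ and the row above. Consider box $(r,j)$ with $(r-1,j)\in\lambda/\mu$, and set $m=\max H_{r-1,j}$. Then $\min H_{r,j}\ge m$, and the minimum is omitted from the reading word iff it equals $m$. I would try to prove the following: given the left-to-right minima and the constraints, the assignment is forced. Concretely, $\min H_{r,j}$ is either $m$, if omitted, or the next unread letter of $b^{(r)}$. After that, the non-minimal entries of box $j$ are the letters of $a^{(r)}$ that are greater than $\min H_{r,j}$ and at most $\min H_{r,j+1}$. Equivalently, the split of $a^{(r)}$ into boxes is forced greedily from the right: the rightmost box takes the longest suffix-initial strictly decreasing run whose entries are at least the maxima constraints.

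The main obstacle is the ambiguity of whether a given minimum was omitted (equal to $m$) or read, and jointly how $a^{(r)}$ splits into boxes. To settle it I would argue by contradiction. Suppose two SVRPPs $H\ne H'$ share $(u,h,\beta)$, and take the first row, top to bottom, and within it the leftmost box, where they differ. Comparing the reading words then forces a letter count in $b^{(r)}$, or an inequality between consecutive boxes, to fail. Both $H$ and $H'$ also have the same $\beta_r$, so a shift of an entry between adjacent boxes would change the reading order, since entries in $a^{(r)}$ are read box-decreasingly right to left. This would violate either strict decrease within boxes or the condition $\max H_{r,j}\le\min H_{r,j+1}$. Working out this exchange argument carefully is where the real case analysis lies.
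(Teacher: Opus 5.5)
\textbf{Verdict: there is a genuine gap, at exactly the step you flag as the main obstacle.} The paper quotes this proposition from Guo--Kang--Liu without proof, so there is no in-paper argument to compare with; what follows assesses your argument on its own.

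\textbf{What works.} Your overall scheme is right.
$h$ splits $u$ into row blocks, and $\beta_r$ cuts each block as $a^{(r)}b^{(r)}$. The number of omitted minima in row $r$ is $(\lambda_r-\mu_r)-|b^{(r)}|$. One must go top to bottom so that $m_j=\max H_{r-1,j}$ is known. Once the minima $x_j=\min H_{r,j}$ of row $r$ are known, every letter $v$ of $a^{(r)}$ is forced into the unique box with $x_j<v\le x_{j+1}$ (with $x_{k+1}=\infty$ for the last box $k$).

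\textbf{The gap.} You never show that the set of boxes whose minimum is omitted is forced, and the contradiction you sketch is not local. Suppose $H,H'$ first differ in omission status at box $j$, say $H$ omits ($x_j=m_j$) and $H'$ reads the next letter $b_s>m_j$ (swap names otherwise). Then nothing fails at box $j$ or at its neighbours.

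The missing idea is a counting argument. Let $c(i)$ and $c'(i)$ be the numbers of letters of $b^{(r)}$ consumed by the first $i$ boxes of row $r$ in $H$ and $H'$.
\begin{enumerate}
\item We have $c'(j)-c(j)=1$, but $c'=c=|b^{(r)}|$ at the end of the row. So there is a first $i>j$ at which $c'-c$ decreases.
\item At that box $H$ reads, $H'$ omits, and $c(i)\le c'(i-1)$.
\item So the letter $x_i$ was already placed by $H'$ in some box $q<i$.
\item Since minima weakly increase along a row,
\[
m_i=x'_i\ge x'_q=x_i>m_i ,
\]
a contradiction.
\end{enumerate}
This uses only the minima, the inequality $x_j\ge m_j$ with equality exactly when omitted, and the fixed length of $b^{(r)}$. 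The non-minimal entries play no role.

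\textbf{Errors in the parts involving $a^{(r)}$.} The word $a^{(r)}$ is strictly decreasing as a whole, contrary to your remark about weak descents at box boundaries. The reason is that the non-minimal entries of box $j+1$ exceed $x_{j+1}\ge\max H_{r,j}$.
\begin{itemize}
\item Hence a longest strictly decreasing initial run is all of $a^{(r)}$, so your greedy rule would put everything in the rightmost box.
\item Likewise, the claim that moving an entry between adjacent boxes would change the reading order is false. Every split of $a^{(r)}$ into consecutive pieces gives the same word.
\end{itemize}
The split is pinned down only by your interval rule, applied after the minima are fixed. So the exchange argument you plan should be aimed at the minima, as above, not at the non-minimal entries.
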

\subsection{Crystal structure on set-valued reverse plane partitions}
\label{def:crystal-SVRPP}
Fix $\alpha, \beta \in \mathbb{Z}^{n}_+$ and let $\lambda/\mu$ be a skew shape such that $\lambda,\mu \in \mathcal{P}_n$. We denote $|\alpha|$ by $\sum_{i }\alpha_i$ and similarly $|\beta|$ by $\sum_{i}\beta_i$.

Let $\SVRPP^{\alpha}_{\beta}(\lambda/\mu,n)$ be the set of all set-valued reverse plane partitions with entries at most $n$, together with $\ceq(H)=\alpha, \ex(H)=\beta$. We note that $\SVRPP^{\alpha}_{\beta}(\lambda/\mu,n)$ could be empty. For instance, $\SVRPP^{(2,0)}_{(0,1)} \big( (2,2)/(1,0), 3\big) $ is empty set. Then we can produce a structure of type $A_{n-1}$ crystal on 
$$ \SVRPP_n(\lambda/\mu): = \bigsqcup_{\alpha,\beta \in \mathbb{Z}_+^n} \SVRPP^{\alpha}_{\beta}(\lambda/\mu,n) ,$$ by embedding $\SVRPP^{\alpha}_{\beta}(\lambda/\mu,n)$ into $\mathcal{W}_n^{|\lambda|-|\mu|-|\alpha| +|\beta|}$, via the following map
$$ H \mapsto \Read(H).$$
The image of $\SVRPP^{\alpha}_{\beta}(\lambda/\mu,n) $ under this embedding is a full subcrystal of $\mathcal{W}_n^{|\lambda|-|\mu|-|\alpha| +|\beta|}$ (see \cite[\S3.2]{Hybrid}). The explicit crystal structure on $ \SVRPP_3\big((2,2)/(1,0)\big)$ can be found in \cite[Appendix~A]{Hybrid}.
\bremark
The crystal structure on $\SVRPP_n(\lambda/\mu)$ defined above, simultaneously generalizes the crystal structure on semi-standard set-valued tableaux introduced by Monical, Pechenik, and Scrimshaw \cite{Travis:symmetric} and that on reverse plane partitions due to Galashin \cite{Galashin:LR}.
\eremark
\bremark
\cite[Remark 3.11 \& \S3.5]{Hybrid}
\label{remark:stat}
The crystal operators $e_i, f_i$ on $\SVRPP_n(\lambda/\mu)$ preserve the height vector, $\ceq$ statistic and $\ex$ statistic.
\eremark
\bremark
\label{remark:crystal}
By definition of the crystal structure on set-valued tableau in \S\ref{def:crystal-SVRPP}, for each $i$, we have $ \Read(e_i. H)=e_i.\Read(H)$ and $ \Read(f_i. H)=f_i.\Read(H)$ for any $H \in \SVRPP_n(\lambda/\mu)$. 
\eremark
A word $w = w_1 w_2 \cdots w_s$ is said to be \textit{Yamanouchi}~\cite[\S 5.2]{Fulton:yt} if, for every $t \geq 1$ and $i \geq 1$, the suffix $w_t \cdots w_s$ contains at least as many occurrences of $i$ as of $(i+1)$. For instance, $4231211$ is a Yamanouchi word, whereas $1312$ is not.
\begin{proposition}
\begin{upshape}
    \cite[Proposition 3.8]{Hybrid}
\end{upshape}
    \label{Prop:Yama}
For $H \in \SVRPP_n(\lambda/\mu)$, $H$ is a highest weight element if and only if $\Read(H)$ is Yamanouchi word.
\end{proposition}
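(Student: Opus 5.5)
Since $\Read$ identifies $\SVRPP_n(\lambda/\mu)$ with a full subcrystal of a word crystal, the claim reduces to the standard characterization of highest weight words. By Remark~\ref{remark:crystal}, $\Read(e_i.H)=e_i.\Read(H)$ for every $i\in[n-1]$. By Proposition~\ref{Prop:unique} together with Remark~\ref{remark:stat}, the map $H\mapsto \Read(H)$ is injective on each $\SVRPP^{\alpha}_{\beta}(\lambda/\mu,n)$ once the height vector is fixed. So $e_i.H=0$ if and only if $e_i.\Read(H)=0$. Indeed, if $e_i$ acted nontrivially on $\Read(H)$, the result would lie in the image of the full subcrystal, hence equal $\Read(e_i.H)$ with $e_i.H\neq0$. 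The converse is immediate from the intertwining. Consequently $H$ is highest weight if and only if $\Read(H)$ is a highest weight element of $\mathcal{W}_n^{m}$, where $m=|\lambda|-|\mu|-|\alpha|+|\beta|$.

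It then remains to show that a word $u$ satisfies $e_i(u)=0$ for all $i\in[n-1]$ if and only if $u$ is Yamanouchi. Fix $i$ and read the $i,i+1$ subword with $+$ for $i$ and $-$ for $i+1$. Cancelling adjacent $(-,+)$ pairs amounts to a bracket matching in which each $-$ looks for a $+$ to its right. An uncancelled $-$ exists exactly when some suffix contains more $(i+1)$'s than $i$'s. To see this, take the leftmost uncancelled $-$: every letter to its right is either matched among themselves or is an uncancelled $-$, so that suffix has strictly more $-$ than $+$. Conversely, if some suffix has more $-$ than $+$, then some $-$ in that suffix cannot be matched inside it, and matching only pairs a $-$ with a $+$ to its right, so that $-$ stays uncancelled. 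Hence $e_i(u)=0$ if and only if every suffix of $u$ has at least as many $i$'s as $(i+1)$'s. Taking this over all $i$ gives exactly the Yamanouchi condition.

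The only delicate point is the first step: ensuring that $e_i$ vanishing on the word is equivalent to $e_i$ vanishing on $H$. This follows from the image being a full subcrystal, since closedness under $e_i$ and $f_i$ means no arrow leaves the image. The bracketing argument is routine.
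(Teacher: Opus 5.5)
Your proof is correct. The paper itself gives no argument for this statement; it is imported from \cite[Proposition 3.8]{Hybrid}. Your two-step route is the standard argument behind that citation: transfer along $\Read$ to the word crystal $\mathcal{W}_n^m$, then use bracket matching to show that $e_i(u)=0$ exactly when no suffix of $u$ has more $(i+1)$'s than $i$'s.

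There are three small points you could tighten.

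\begin{itemize}
\item The height-vector caveat in your first step is unnecessary. Row $i$ of any $H\in\SVRPP^{\alpha}_{\beta}(\lambda/\mu,n)$ contributes exactly $\lambda_i-\mu_i-\alpha_{i-1}+\beta_i$ letters to $\Read(H)$: there are $\beta_i$ non-minimal entries, and $\alpha_{i-1}$ minima are omitted. So $\h(H)$ is determined by $\lambda/\mu$, $\alpha$ and $\beta$, and Proposition~\ref{Prop:unique} already makes $\Read$ injective on all of $\SVRPP^{\alpha}_{\beta}(\lambda/\mu,n)$.
\item The equivalence $e_i.H=0\iff e_i.\Read(H)=0$ is simply Remark~\ref{remark:crystal} read with the convention $\Read(0)=0$. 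The full-subcrystal discussion is just unpacking that remark.
\item The crystal operators only cover $i\in[n-1]$. The Yamanouchi conditions for $i\ge n$ hold vacuously because all letters lie in $[n]$, which is worth saying when you take the condition over all $i$.
\end{itemize}
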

Let $\mathcal{P}_n$ be the set of all partitions with at most $n$ parts. Also, let $\Tab_n(\lambda) $ be the set of all semi-standard Young tableaux of shape $\lambda$ with entries at most $n$.
\begin{definition}
    \label{def: Demazure}
For $w \in S_n, \lambda \in \mathcal{P}_n,$ the \emph{Demazure crystal} $\mathcal{B}_{w} (\lambda)$ is defined as:
\begin{equation*}
\label{eq:demcrys}
  \mathcal{B}_{w} (\lambda) := \{f_{i_1} ^{m_1} f_{i_2} ^{m_2} \cdots f_{i_p} ^{m_p}  T_{\lambda} : m_j \geq0 \text{ for }j \in [p] \} \setminus \{0\} \subseteq  \Tab_n(\lambda),  
\end{equation*}
where $s_{i_1} s_{i_2} \cdots s_{i_p}$ is any reduced expression for $w$ and  $T_{\lambda}$ is the unique semi-standard Young tableau of shape $\lambda$ along with $\wt(T_{\lambda})=\lambda$.     
\end{definition}
\bremark
\cite[Theorem 13.5]{Bump-Anne}
$\mathcal{B}_{w} (\lambda)$ is independent of the choice of reduced expression of $w$.
\eremark
For $i \in [n-1]$, the $ i^{th}$ Demazure operator $D_i$ acting on the polynomial ring $\mathbb{Z}[x_1,x_2,\dots,x_n]$ is given by: 
$$ D_i(f):= \frac{x_i\, f - \, x_{i+1}s_i(f)}{x_i - x_{i+1}},$$
where $s_i$ acts on $f$ by interchanging $x_i$ and $x_{i+1}$. For a permutation $w \in S_n$, we define $D_w :=  D_{i_1}D_{i_2} \cdots D_{i_k},$ where $s_{i_1}s_{i_2} \cdots s_{i_k}$ is a reduced expression for $w$. As the operators $D_i$ satisfy the braid relations, $D_w$ is independent of the choice of reduced expression of $w$.
\begin{definition}
  For any $\alpha \in \mathbb{Z}^n _{+} ,$ the \emph{key polynomial} is defined by $\key_{\alpha}:=D_w(\characx^{\alpha^{\dagger}}),$ where $\alpha^{\dagger}$ is the unique partition in $S_n$-orbit of $\alpha$ and $w \in S_n$ is any permutation such that $w.\alpha^{\dagger}=\alpha$. Here, $S_n$ acts on $n$-tuples via the standard left permutation action $w.(\beta_1,\dots,\beta_n)=(\beta_{w^{-1}(1)},\dots, \beta_{w^{-1}(n)})$.
\end{definition}
\begin{proposition} 
\begin{upshape}
    \cite{Kashiwara:refine-Demazure}
\end{upshape}
For $\lambda \in \mathcal{P}_n$ and $ w \in S_n,$ $\displaystyle\sum_{T \in \mathcal{B}_{w}(\lambda)}\characx^{\wt(T)} = \key_{w.\lambda}$,
where for an $n$-tuple $\beta=(\beta_1,\dots,\beta_n)$, $\characx^{\beta}=x_1^{\beta_1}\cdots x_n^{\beta_n}$.
\end{proposition}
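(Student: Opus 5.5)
The proposition is Kashiwara's refined Demazure character formula, so I would argue by induction on $\ell(w)$, matching the operators $f_i^{*}$ on the crystal side with $D_i$ on the polynomial side.

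\textbf{Base case.} For $w=\mathrm{id}$ we have $\mathcal{B}_{\mathrm{id}}(\lambda)=\{T_\lambda\}$, whose character is $\characx^{\lambda}$. Since $\lambda$ is a partition, $\lambda^{\dagger}=\lambda$ and we may take the identity permutation, so $\key_\lambda=\characx^\lambda$.

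\textbf{Inductive step.} Suppose $\ell(s_iw)>\ell(w)$. Prepending $s_i$ to a reduced word for $w$ gives a reduced word for $s_iw$. Hence, by Definition~\ref{def: Demazure},
$$\mathcal{B}_{s_iw}(\lambda)=\bigcup_{k\ge 0}f_i^k\,\mathcal{B}_w(\lambda)\setminus\{0\}.$$
On the polynomial side, $D_{s_iw}=D_iD_w$.

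It therefore suffices to show
$$\sum_{T\in\mathcal{B}_{s_iw}(\lambda)}\characx^{\wt T}=D_i\Big(\sum_{T\in\mathcal{B}_w(\lambda)}\characx^{\wt T}\Big),$$
and then to check that the key side is consistent.
\begin{itemize}
\item If $s_iw.\lambda\neq w.\lambda$, then $\key_{s_iw.\lambda}=D_i\key_{w.\lambda}$ directly from the definition of $\key$.
\item If $s_iw.\lambda=w.\lambda$, then $\key_{w.\lambda}$ is symmetric in $x_i,x_{i+1}$, so $D_i$ fixes it. The crystal side also gives equality, because the string property below then forces every relevant $i$-string to lie entirely in $\mathcal{B}_w(\lambda)$.
\end{itemize}

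\textbf{The string property.} The key combinatorial input is this: for every $i$-string $S$ of $\Tab_n(\lambda)$ (a maximal chain $b,f_ib,\dots,f_i^{m}b$ with $e_ib=0$), the intersection $S\cap\mathcal{B}_w(\lambda)$ is either empty, all of $S$, or just the head $\{b\}$.

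Granting this, the character computation is string by string:
\begin{itemize}
\item A full string has character $\characx^{\mu}+\dots+\characx^{s_i\mu}$. This is symmetric under $s_i$, so $D_i$ fixes it; also $f_i^{*}$ does not enlarge the string.
\item A head-only string contributes $\characx^{\mu}$ with $\mu_i-\mu_{i+1}=m\ge 0$. A direct calculation gives $D_i(\characx^\mu)=\characx^\mu+\characx^{\mu-\epsilon_i+\epsilon_{i+1}}+\dots+\characx^{s_i\mu}$, which is exactly the character of the full string $f_i^{*}b$.
\end{itemize}
Summing over strings gives the displayed identity.

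\textbf{Main obstacle.} The hard step is proving the string property for $\mathcal{B}_{s_iw}(\lambda)$ with respect to every $j$, not just $j=i$; it must be carried along in the induction. For $j=i$ it is immediate, since applying $f_i^{*}$ fills out every string that meets $\mathcal{B}_w(\lambda)$.

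For $j\neq i$ I would use the fact that $\Tab_n(\lambda)$ is the normal crystal $B(\lambda)$, so the structure is determined by restrictions to rank-two Levi subcrystals. I would first try to reduce to the rank-two cases $\{i,j\}$ of types $A_1\times A_1$ and $A_2$, and verify the property there by explicit knowledge of $B(\lambda)$ for $\mathfrak{sl}_3$ together with the braid-relation independence of $\mathcal{B}_w(\lambda)$. If that reduction stalls, I would instead invoke Kashiwara's global-basis argument, or Littelmann's path-model proof that Demazure crystals are unions of such string intersections.
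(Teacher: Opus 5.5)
Everything you actually carry out is correct: the base case, the identity $\mathcal{B}_{s_iw}(\lambda)=\mathcal{F}_i\mathcal{B}_w(\lambda)$ with $\mathcal{F}_iX:=\bigcup_{k\ge 0}f_i^kX\setminus\{0\}$, the string-by-string evaluation of $D_i$, and the key-polynomial recursion. Your case split is unnecessary, since $\key_{s_iw.\lambda}=D_iD_w(\characx^{\lambda})=D_i\key_{w.\lambda}$ always holds. This is exactly Kashiwara's reduction of the character formula to the string property; the paper does not reproduce it but simply cites Kashiwara.

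The gap is that the whole difficulty lies in the string property for $j\neq i$, and you do not prove it. Moreover, the rank-two route you sketch cannot work as stated. Inside an $A_2$-crystal, a subset with both the $i$- and $j$-string properties need not have $\mathcal{F}_i$ of it satisfying the $j$-string property. Concretely, label the elements of $\Tab_3(2,1)$ by their row reading words (rows bottom to top, each left to right, so $211$ is the tableau with rows $1\,1$ and $2$).
\begin{itemize}
\item The $1$-strings are $211\to212$, $\{213\}$, $311\to312\to322$ and $313\to323$.
\item The $2$-strings are $211\to311$, $212\to213\to313$, $\{312\}$ and $322\to323$.
\end{itemize}
The set $X=\{211,212,213,313\}$ meets every $1$-string and every $2$-string in $\emptyset$, the head, or the whole string. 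Yet $\mathcal{F}_1X=\{211,212,213,313,323\}$ contains $323$ but not the head $322$ of its $2$-string. Accordingly, the character of $\mathcal{F}_2\mathcal{F}_1X$ exceeds $D_2D_1$ applied to the character of $X$ by $x_2x_3^2$.

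So your induction has to use that the set really is $\mathcal{B}_w(\lambda)$, not merely that it has string properties. Braid-independence does handle the case $s_js_iw<s_iw$: choose a reduced word of $s_iw$ beginning with $s_j$ and use the inductive $j$-string property of $\mathcal{B}_{s_js_iw}(\lambda)$. When $s_js_iw>s_iw$, however, you need a strictly stronger inductive invariant describing how $\mathcal{B}_w(\lambda)$ meets the $\{i,j\}$-components of $\Tab_n(\lambda)$. Your proposal neither names such an invariant nor indicates how it would propagate through the induction. Without it, your argument becomes a proof only after importing Kashiwara's string property (proved via global bases) or Littelmann's path-model analogue. At that point it rests on the same external result the paper cites.
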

A flag $\Phi = (\Phi_1, \Phi_2, \ldots)$ is a finite weakly increasing sequence of positive integers. For $n \in \mathbb{N},$ $\mathcal{F}[n] $ denotes the set of all flags $\Phi = (\Phi _1, \Phi _2, \ldots, \Phi_n)$ such that $\Phi_n = n$. Let $\Phi \in \mathcal{F}[n]$. Then we say a set-valued reverse plane partition $H$ of shape $\lambda/\mu$ $(\lambda,\mu \in \mathcal{P}_n)$ respects flag $\Phi$ if the entries in $i^{th}$ row of $H$ are at most $\Phi_i$ $ \forall  i \in [n]$. 
\subsection{The flagged hybrid Grothendieck polynomials} Following \cite[\S6.3]{Hybrid},
we define the \emph{flagged hybrid Grothendieck polynomial} $H_{\lambda/\mu}(\characx_{\Phi}; \mathbf{t};\mathbf{w})$ by 
$$ H_{\lambda/\mu}(\characx_{\Phi}; \mathbf{t}; \mathbf{w}):= \displaystyle \sum _{T \in \SVRPP(\lambda/\mu,\Phi)} \mathbf{t}^{\ceq(T)}\mathbf{w}^{\ex(T)}\characx^{\wt(T)} , $$
where $ \SVRPP(\lambda/\mu,\Phi)$ denotes the set of all set-valued reverse plane partitions of shape $\lambda/\mu$ that respects the flag $\Phi$. Also, for $\beta =(\beta_1,\beta_2,\dots, \beta_n) \in \mathbb{Z}^n _{+}$, $ \mathbf{t}^{\beta}=t_1^{\beta_1}t_2^{\beta_2}\cdots t_n^{\beta_n} $ and $\mathbf{w}^{\beta}=w_1^{\beta_1}w_2^{\beta_2}\cdots w_n^{\beta_n} $. 

Let $\mathbf{0}=(0,0,\ldots,0) \in \mathbb{Z}^n _{+}$. We now highlight several notable specializations:
\begin{itemize}
    \item When $\Phi=(n,\dots,n)$, the polynomial $H_{\lambda/\mu}(\characx_{\Phi}; \mathbf{t};\mathbf{w}) $ is the hybrid Grothendieck polynomial $H_{\lambda/\mu}(\characx; \mathbf{t};\mathbf{w})$, which is introduced in \cite[\S1]{Hybrid} as follows: 
    $$ H_{\lambda/\mu}(\characx; \mathbf{t};\mathbf{w}):= \displaystyle \sum _{T \in \SVRPP_n(\lambda/\mu)} \mathbf{t}^{\ceq(T)}\mathbf{w}^{\ex(T)}\characx^{\wt(T)} , $$
    where $ \SVRPP_n(\lambda/\mu)$ denotes the set of all set-valued reverse plan partitions of shape $\lambda/\mu$ with entries $\leq n$.
    \item At $\mathbf{t}=\mathbf{0}$, $H_{\lambda/\mu}(\characx_{\Phi}; \mathbf{0};-\mathbf{w}) $ is the flagged refined skew stable Grothendieck polynomial $G_{\lambda/\mu}(X_\Phi; \mathbf{w})$ in \cite[\S2.3]{Sidhu:SVT}, defined by 
    $$G_{\lambda/\mu}(X_\Phi; \mathbf{w}):=\displaystyle\sum_{T \in \SVT(\lambda/\mu,\Phi)} (-1)^{|\ex(T)|} \mathbf{w}^{\ex(T)} \characx^{\wt(T)},$$ where the set of all semi-standard set-valued tableaux in $\SVRPP(\lambda/\mu,\Phi)$ is denoted by $\SVT(\lambda/\mu,\Phi)$.
    
    Also, whenever $\Phi=(n,n,\dots,n) \in \mathbb{Z}^n _{+}$, $H_{\lambda/\mu}(\characx_{\Phi}; \mathbf{0};-\mathbf{w}) $ is the row-refined skew stable Grothendieck polynomial $RG_{\lambda/\mu}(\characx;\mathbf{w}),$ see \cite[\S3]{Melody-Nathan}. Furthermore, when $\mathbf{w}=\mathbf{0}$, it coincides with the skew stable Grothendieck polynomial $G_{\lambda/\mu}(\characx)$ in \cite[\S2]{Buch-KLR}. 
    \item Setting $\mathbf{w}=\mathbf{0}$, $H_{\lambda/\mu}(\characx_{\Phi}; \mathbf{t};\mathbf{0})$ is the row-flagged refined dual stable Grothendieck polynomial $\Tilde{g}^{\row(\mathbf{1},\Phi)}_{\lambda}(\characx;\mathbf{t})$ in \cite[\S2.2]{Kim}, which is defined by
    $$ \Tilde{g}^{\row(\mathbf{1},\Phi)}_{\lambda}(\characx;\mathbf{t}):=\displaystyle\sum_{T \in \RPP(\lambda/\mu,\Phi)}  \mathbf{t}^{\ceq(T)} \characx^{\wt(T)},$$
     where $\RPP(\lambda/\mu,\Phi)$ is the set of all reverse plane partitions in $\SVRPP(\lambda/\mu,\Phi)$.
     
     Specializing further $\Phi=(n,n,\dots,n) \in \mathbb{Z}^n _{+}$, $H_{\lambda/\mu}(\characx_{\Phi}; \mathbf{t};\mathbf{0})$ reduces to the refined dual stable Grothendieck polynomial $\Tilde{g}_{\lambda/\mu}(\characx;\mathbf{t})$ \cite[\S3]{Galashin:Bender-Knuth}. Furthermore, setting $\mathbf{t}=\mathbf{0}$, it reduces to the skew dual stable Grothendieck polynomial $g_{\lambda/\mu}(\characx)$ in \cite[\S9]{Lam}. 
     \item Setting both $\mathbf{t}=\mathbf{0}, \mathbf{w}=\mathbf{0}$, $H_{\lambda/\mu}(\characx_{\Phi}; \mathbf{0};\mathbf{0}) $ is the flagged skew Schur polynomial \cite{RS}
     $s_{\lambda/\mu}(X_\Phi):=\displaystyle\sum_{T \in \Tab(\lambda/\mu,\Phi)}\characx^{\wt(T)}$, where $\Tab(\lambda/\mu,\Phi)$ is the set of all semi-standard Young tableaux in $\SVRPP(\lambda/\mu,\Phi)$. 
     
     In addition, when $\Phi=(n,n,\ldots, n) \in \mathbb{Z}^n _{+} $, $H_{\lambda/\mu}(\characx_{\Phi}; \mathbf{0};\mathbf{0})$ is the classical skew Schur polynomial $s_{\lambda/\mu}(\characx)$. 
\end{itemize}
\section{proof of the main theorem}
\label{Section 3}
In this section, we provide a Demazure crystal structure on $\SVRPP(\lambda/\mu,\Phi)$, and consequently deduce an expansion of $H_{\lambda/\mu}(\characx_{\Phi};\mathbf{t};\mathbf{w})$ in terms of key polynomials.

Fix positive integers $m$ and $n$, and let $\operatorname{Mat}_{m \times n}(\mathbb{Z}_{+})$ denote the set of all $m \times n$ matrices with non-negative integer entries. To every matrix $A = (a_{ij}) \in \operatorname{Mat}_{m \times n}(\mathbb{Z}_{+})$, we associate a biword $w_A$ represented as a two-line array:
\begin{equation}
\label{eq:biword}
    w_A = \begin{bmatrix}
    i_t & \cdots & i_2 & i_1 \\
    j_t & \cdots & j_2 & j_1
\end{bmatrix}
\end{equation}
For each pair $(i, j) \in [m] \times [n]$, the column vector $\begin{bmatrix} i \\ j \end{bmatrix}$ appears in $w_A$ with multiplicity precisely equal to $a_{ij}$. The columns are ordered according to the following conditions:
\begin{enumerate}[label=\textup{(\arabic*)}]
    \item $i_t \ge \cdots \ge i_2 \ge i_1 \ge 1$,
    \item $i_{k+1} > i_k$ whenever $j_{k+1} > j_k$.
\end{enumerate}
\begin{example}
If we assume
$ A=
\begin{pmatrix}
1 & 2 & 0 \\
0 & 1 & 3 \\
1 & 0 & 2
\end{pmatrix}
$, then $w_A =
\begin{bmatrix}
3 & 3 & 3 & 2 & 2 & 2 & 2 & 1 & 1 &1\\
1 & 3 & 3 & 2 & 3 & 3 & 3 & 1 & 2 &2 
\end{bmatrix}$.   
\end{example}
The \emph{Knuth} equivalence $\sim$ is defined on the set of all words over $\mathbb{N}$ by the transitive
closure of the relations
$$ \mathbf{u}xzy\mathbf{v} \sim \mathbf{u}zxy\mathbf{v} \quad \text{ for } x \leq y <z,$$
$$ \mathbf{u}yxz\mathbf{v} \sim \mathbf{u}yzx\mathbf{v} \quad \text{ for } x < y \leq z,$$ where $\mathbf{u},\mathbf{v}$ are arbitrary words in $\mathbb{N}$. 
\begin{definition}
    The \emph{column reading word} of a semi-standard Young tableau $T$, denoted by $\cw(T)$, is formed by reading the entries column by column  from left to right, listing the elements within each column from bottom to top.   
\end{definition}
\begin{example}
Let $T= \ytableausetup{mathmode,
notabloids}
 \begin{ytableau}
    1 & 2 & 3 \\ 2 & 3 & 5 \\ 3 & 4 
\end{ytableau}$. Then $\cw(T)=321.432.53$. We have used dots to separate subwords corresponding to different columns.
\end{example}
\begin{definition}
\begin{upshape}
    \cite[\S2]{RS}
\end{upshape}
  The \emph{column insertion} of the biword $w_A$ \eqref{eq:biword} corresponding to the matrix $A$ yields a pair $(P,Q)$ of semi-standard Young tableaux, constructed inductively for $1 \leq k \leq t$ as follows:
  
  For each $1 \leq k \leq t$, let $P_k$ be the unique semi-standard Young tableau satisfying $\cw(P_k) \sim j_k\cdots j_2j_1$. We define $P=P_t$.
  
  We define $Q$ to be the semi-standard Young tableau with $\sh(Q)=\sh(P)$ such that for $1 \leq k \leq t$, its restriction $Q_k$ to the letters $i_1,i_2,\dots,i_k$ is a semi-standard Young tableau  such that $\sh(Q_k)=\sh(P_k)$.  
\end{definition}
\begin{example}
Let
$A=
\begin{pmatrix}
1 & 0 & 1 &0 \\
0 & 1 & 1 &1 \\
\end{pmatrix}
$, then $w_A =
\begin{bmatrix}
2 & 2 & 2 & 1 & 1 \\
2 & 3 & 4 & 1 & 3 
\end{bmatrix}$. Then we have
$$ 
P_1= 
\ytableausetup{mathmode,
notabloids}
 \begin{ytableau}
    3 
 \end{ytableau}
, \quad
P_2= 
\ytableausetup{mathmode,
notabloids}
 \begin{ytableau}
    1 & 3 
 \end{ytableau}
, \quad
P_3= 
\ytableausetup{mathmode,
notabloids}
 \begin{ytableau}
    1 & 3 \\ 4 
 \end{ytableau}
, \quad
P_4= 
\ytableausetup{mathmode,
notabloids}
 \begin{ytableau}
    1 & 3 \\3 & 4 
 \end{ytableau}
, \quad
P=P_5= 
\ytableausetup{mathmode,
notabloids}
 \begin{ytableau}
    1 &3 &3 \\ 2&4 
 \end{ytableau};
$$
$$ 
Q_1= 
\ytableausetup{mathmode,
notabloids}
 \begin{ytableau}
    1 
 \end{ytableau}
, \quad
Q_2= 
\ytableausetup{mathmode,
notabloids}
 \begin{ytableau}
    1 & 1 
 \end{ytableau}
, \quad
Q_3= 
\ytableausetup{mathmode,
notabloids}
 \begin{ytableau}
    1 & 1 \\ 2 
 \end{ytableau}
, \quad
Q_4= 
\ytableausetup{mathmode,
notabloids}
 \begin{ytableau}
    1 & 1 \\2 & 2 
 \end{ytableau}
, \quad
Q=Q_5= 
\ytableausetup{mathmode,
notabloids}
 \begin{ytableau}
    1 &2 &2 \\ 2&2 
 \end{ytableau}.
$$

\end{example}

\begin{theorem}
\begin{upshape}
\cite[Appendix A, Proposition 2]{Fulton:yt} 
\end{upshape}
\label{Theorem:Burge}
The column insertion provides a bijection between $\Mat_{m \times n}(\mathbb{Z}_{+})$ and   
the set of pairs $(P,Q)$ of semi-standard Young tableaux of the same shape, with entries in $[n],[m]$, respectively. If $A$ maps to $(P,Q)$ under this bijection, we write $(w_A \rightarrow \emptyset)=(P,Q)$. 
\end{theorem}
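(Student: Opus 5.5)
We prove Theorem~\ref{Theorem:Burge} by constructing the inverse map explicitly and checking that both composites are the identity. Throughout, we use two standard facts: every word is Knuth equivalent to the column word of a unique semi-standard Young tableau, and this tableau is obtained by successive column insertion.

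\textbf{Well-definedness.} For $A\in\Mat_{m\times n}(\mathbb{Z}_+)$, the tableau $P_k$ exists and is unique for each $k$, by the first standard fact. Passing from $P_{k-1}$ to $P_k$ amounts to column-inserting $j_k$ into $P_{k-1}$: since $\cw(P_{k-1})\sim j_{k-1}\cdots j_1$, we have $j_k\,\cw(P_{k-1})\sim \cw(j_k\to P_{k-1})$. Column insertion adds exactly one box, so the shapes $\sh(P_k)$ form a chain, each step adding one box. We then place $i_k$ in the new box to build $Q$, and must show the result is semi-standard.

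The top letters satisfy $i_t\ge\cdots\ge i_1$. For a maximal block of equal top letters $i_{k+1}=\cdots=i_{k+r}=i$, condition (2) forces $j_{k+1}\ge j_{k+2}\ge\cdots\ge j_{k+r}$ (weakly decreasing). We then need the column bumping lemma: inserting a weakly decreasing sequence successively by column insertion produces new boxes that lie in strictly different rows, each strictly below the previous one. So the new boxes form a vertical strip, no two in the same row. Because the entries of $Q$ come in the order $i_1\le i_2\le\cdots$, labelling each vertical strip by a constant value yields a semi-standard $Q$ of the same shape as $P$. Thus each such vertical strip is filled by its top letter, which is weakly increasing along the insertion order.

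\textbf{Inverse.} Given $(P,Q)$ of equal shape, we locate the largest entry $i$ of $Q$ and, among its boxes, the one added last, namely the lowest one (equivalently, the leftmost). We remove it from $Q$ and reverse column insertion on $P$ from that corner, which ejects a letter $j$; this recovers the column $\left[\begin{smallmatrix} i\\ j\end{smallmatrix}\right]$. Iterating recovers a biword. We must check that it satisfies (1) and (2). Condition (1) is automatic. Condition (2) follows from the converse of the bumping lemma: reverse-inserting successively from the corners of a vertical strip, taken from bottom to top, ejects a weakly decreasing sequence of letters. Reading the biword therefore gives a matrix $A$ with $a_{ij}$ equal to the multiplicity of $\left[\begin{smallmatrix} i\\ j\end{smallmatrix}\right]$.

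Reverse insertion inverts insertion box by box, so the two maps are mutually inverse. Entries in $[n]$ and $[m]$ are clearly preserved.

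\textbf{Main obstacle.} The hard step is the column bumping lemma and its converse, with the correct strictness. Concretely, column-inserting $x$ and then $x'\le x$ must make the second new box lie strictly below, and weakly left of, the first. We will prove this by tracking the bumping routes. In each column, $x'$ bumps the smallest entry $\ge x'$. The route of $x'$ stays weakly above the route of $x$, column by column, so it continues weakly further right. Its terminal box therefore lies in a later column, and hence in a strictly lower row? That is not right: a route that continues further right ends in a higher row, not a lower one. We must fix the comparison carefully so that the correct strip type (vertical versus horizontal) emerges, and align it with the strictness in (2). The plan is to track the routes via the convention $i_{k+1}>i_k$ when $j_{k+1}>j_k$, which forces equal top letters to come with weakly decreasing bottoms; this is the standard Burge/dual-RSK case treated in Fulton, Appendix A.
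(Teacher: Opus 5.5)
You have the standard strategy for the result the paper only cites from Fulton: build $P$ by successive column insertion, record the new boxes in $Q$, and invert by reverse column insertion. However, the one step that carries real content is stated backwards, and as written it is false.

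Inside a block of equal top letters you correctly get $j_{k+1}\le j_k$. For $x'\le x$, the column bumping lemma says the new box $B'$ of $x'\to(x\to T)$ lies \emph{strictly right of and weakly above} the new box $B$ of $x\to T$. Equality is included: $x'$ bumps the smallest entry $\ge x'$, so a second copy of $x$ bumps the first. What you describe (strictly below, weakly left) is the behaviour for $x<x'$, which is the wrong case. So the boxes receiving a fixed label $i$ form a \emph{horizontal} strip, which is exactly what a semistandard $Q$ needs. Your claim that a vertical strip filled with a constant value gives a semistandard $Q$ cannot hold, since equal entries in one column violate column strictness. The paper's own example shows the correct behaviour: inserting $4,3,2$ into $P_2$ creates the boxes $(2,1),(2,2),(1,3)$, all labelled $2$. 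Your ``Main obstacle'' paragraph concedes the comparison is unresolved, so the proposal never proves that $Q$ is semistandard.

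The same reversal breaks the inverse. Among the boxes of $Q$ containing its largest entry, the one added last is the \emph{rightmost} one. It is a corner, because a box to its right would again contain the maximum and a box below would exceed it. The lowest/leftmost such box is in general not a corner: in the example above it is $(2,1)$, with $(2,2)$ to its right, so reverse insertion from it is undefined. The converse lemma you need is this: if $B$ is strictly left of and weakly below $B'$, then reverse-inserting from $B'$ and then from $B$ ejects $x'$ and then $x$ with $x'\le x$. That is what yields condition (2).

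To repair the forward lemma, follow your own correct observation about routes. Suppose $x'\le x$. In each column, the letter carried by the route of $x'$ bumps either the letter just placed there by the route of $x$, or an entry strictly above it, which is smaller. Hence the route of $x'$ stays weakly above that of $x$, and the letters passed to the next column satisfy $y'_{c+1}\le y_{c+1}$. In the column $c$ where the route of $x$ ended by appending $y_c$ at the bottom, the inequality $y'_c\le y_c$ forces the route of $x'$ to bump an entry and continue. So $B'$ lies in a column strictly right of $c$. That column is no longer than column $c$, so $B'$ is weakly above $B$. The inverse lemma follows by running the same comparison backwards.
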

\begin{proposition}
    \begin{upshape}
        \cite[Proposition 29]{RS}
    \end{upshape}
    \label{Prop:RS}
Let $\mathbf{u}=u_1\cdots u_p$ be a word in $[n]$ and $\mathbf{w}_{[p]}$ be the word $p\cdots 21 $. Also, let $h_i$ be either $e_i$ or $f_i$. Then if
$$(
\begin{bmatrix}
    \mathbf{w}_{[p]}\\
    \mathbf{u}
\end{bmatrix} \rightarrow \emptyset) =(P,Q) ,$$ then
$$(
\begin{bmatrix}
\mathbf{w}_{[p]}\\
h_i.\mathbf{u}
\end{bmatrix} \rightarrow \emptyset) =(h_i.P,Q) .$$
\end{proposition}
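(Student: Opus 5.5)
This is Reiner--Shimozono's result, so the plan is to reduce it to two standard facts about crystal operators on words: they commute with Knuth equivalence (coplacticity), and they are compatible with taking suffixes. First I unwind the biword. Since the top row $p\cdots 21$ has distinct letters, the ordering rules give $i_k=k$ and $j_k=u_{p-k+1}$. So $P_k$ is the unique tableau with $\cw(P_k)\sim u_{p-k+1}\cdots u_p$, the $k$-th suffix of $\mathbf{u}$. In particular $P$ is the unique tableau with $\cw(P)\sim \mathbf{u}$. Moreover $Q$ is the standard tableau recording the growth of the shapes $\sh(P_1)\subset \sh(P_2)\subset\cdots\subset\sh(P_p)$, i.e.\ of the shapes of the rectifications of the successive suffixes of $\mathbf{u}$.

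\emph{Step 1 (the $P$-tableau).} I would invoke the coplactic property of the signature-rule operators of Example~\ref{ex:crystal}: if $\mathbf{v}\sim\mathbf{v}'$ and $h_i\mathbf{v}\neq 0$, then $h_i\mathbf{v}'\neq 0$ and $h_i\mathbf{v}\sim h_i\mathbf{v}'$. This is due to Lascoux--Sch\"utzenberger; see Bump--Schilling. I would also use that $h_i$ maps the column word of a tableau to the column word of a tableau of the same shape; this is the crystal structure on $\Tab_n(\lambda)$, with $h_i.P$ defined through $\cw$. Together these give $\cw(h_i.P)=h_i\cw(P)\sim h_i\mathbf{u}$. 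By uniqueness of the tableau in a Knuth class, the $P$-tableau of $h_i\mathbf{u}$ is $h_i.P$. Coplacticity also shows that $h_i\mathbf{u}=0$ exactly when $h_i.P=0$, so both sides of the statement make sense simultaneously.

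\emph{Step 2 (the $Q$-tableau).} It suffices to show that for every $k$, the suffix of length $k$ of $h_i\mathbf{u}$ rectifies to a tableau of the same shape as $P_k$. Take $h_i=f_i$; the case $e_i$ is symmetric. Let $x$ be the position of the rightmost uncancelled $+$ in $\mathbf{u}$, and let $\mathbf{s}$ be a suffix of $\mathbf{u}$.
\begin{enumerate}
\item If $\mathbf{s}$ starts to the right of $x$, then $\mathbf{s}$ is unchanged by $f_i$.
\item If $\mathbf{s}$ contains $x$, I claim the suffix of $f_i\mathbf{u}$ equals $f_i\mathbf{s}$. A cancelling pair is an $i+1$ to the left of an $i$, and cancellation is determined by bracket matching. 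Hence any pair whose two letters both lie in $\mathbf{s}$ is still cancelled in $\mathbf{s}$. Pairs whose $i+1$ lies outside $\mathbf{s}$ only free up $i$'s that lie to the left of the leftmost $i$ uncancelled in $\mathbf{u}$ inside $\mathbf{s}$. Therefore $x$ is still uncancelled in $\mathbf{s}$, and it is still the rightmost uncancelled $+$.
\end{enumerate}
In both cases the new suffix is either $\mathbf{s}$ itself or $h_i\mathbf{s}$. Since $h_i$ preserves the shape of the rectification (Step 1 applied to $\mathbf{s}$), the shapes $\sh(P_k)$ are unchanged for all $k$. Hence the recording tableau is still $Q$.

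The main obstacle is the coplactic property itself. I would cite it rather than reprove it. If a self-contained argument is needed, check the two elementary Knuth relations directly: restrict to the letters $i,i+1$ and verify that the reduced signature, and the identity of the letter that $h_i$ changes, are preserved up to a relabeling compatible with the relation. The suffix bookkeeping in Step 2 also needs care, but it is routine.
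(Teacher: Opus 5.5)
Your proof is correct. The paper gives no argument for this statement: it quotes Reiner--Shimozono's Proposition~29 and uses it as a black box in the proof of Proposition~\ref{Proposition:main}. Your proposal therefore supplies a self-contained justification where the paper relies on a citation.

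Your route is the standard one. For $P$, you combine coplacticity of $e_i,f_i$ with uniqueness of the tableau in a Knuth class. For $Q$, you note that $i_k=k$, so the standard recording tableau is determined by the shapes $\sh(P_k)$ of the rectified suffixes of $\mathbf{u}$. Your bracket-matching lemma then shows these shapes do not change: for every suffix $\mathbf{s}$ of $\mathbf{u}$, the corresponding suffix of $h_i\mathbf{u}$ is either $\mathbf{s}$ or $h_i\mathbf{s}\neq 0$. The nesting argument is sound. A pair split by the cut can only free an $i$ lying to the left of every $i$ already uncancelled in $\mathbf{u}$. Restricting to a suffix also creates no new cancellations. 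Hence the rightmost uncancelled $i$ is unchanged.

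Two small presentational points.

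\emph{Reading word.} In this paper the operators on tableaux are defined through the row reading word $\Read$ (rows bottom to top), not $\cw$. This is harmless. You can write $h_i\mathbf{u}\sim h_i\Read(P)=\Read(h_i.P)\sim\cw(h_i.P)$, which gives the same conclusion without the separate fact about column words.

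\emph{The $e_i$ case.} It is not literally symmetric to the $f_i$ case. The symmetry exchanging $e_i$ and $f_i$ reverses words, so it swaps suffixes with prefixes. The case is in fact easier. Whether a letter $i+1$ is cancelled depends only on the letters to its right. So any suffix containing the leftmost uncancelled $i+1$ of $\mathbf{u}$ has that same letter as its leftmost uncancelled $i+1$. Alternatively, you can deduce it from the $f_i$ case, since $e_i\mathbf{u}=\mathbf{v}$ exactly when $f_i\mathbf{v}=\mathbf{u}$.
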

For $\alpha \in \mathbb{Z}^n_{+} $, let $\text{key}(\alpha)$ denote the unique semi-standard Young tableau of shape $\alpha^{\dagger}$ and weight $\alpha$. Given a flag $\Phi \in \mathcal{F}[n],$ we define $\mathcal{W}(\alpha, \Phi)$ to be the set of all words
$$\mathbf{w}= w^{(n)} \cdots w^{(2)} w^{(1)},$$ such that
\begin{itemize}
    \item each $w^{(i)}$ is weakly increasing word and  the last letter of $w^{(i)}$ is greater than the first letter of $w^{(i-1)}$.
    \item each word $w^{(i)}$ consists of $\alpha_i$ many letters, with the letters are bounded above by $\Phi_i$.
    \item $(
    \begin{bmatrix}
    \mathbf{b}(\alpha)\\
    \mathbf{w}
    \end{bmatrix} \rightarrow \emptyset) =(-, \text{key}(\alpha))$, where $\mathbf{b}(\alpha)$ the word $ b^{(n)} \cdots b^{(2)}b^{(1)}$ in which $b^{(j)}$ consists of a string of $\alpha_j$ copies of $j$. For instance, $\mathbf{b}(1,2,0,2) = 44221$.
\end{itemize}
\begin{example}
    Let $\alpha=(1,2,0,1)$ and $ \Phi=(1,2,3,4)$. Then the set of all words $\mathbf{w}= w^{(4)} w^{(3)} w^{(2)} w^{(1)}$ in $\mathcal{W}(\alpha, \Phi)$ are given below:
    $$ 3\cdot\cdot 12 \cdot 1 \quad 3\cdot \cdot 22 \cdot 1 \quad 4 \cdot \cdot 22 \cdot 1 .$$
\end{example}
\begin{definition}
\begin{upshape}
    \cite{RS}
\end{upshape} 
\label{def:ogct}
A semi-standard Young tableau $Q$ is called $(\lambda/\mu, \Phi)$-compatible if there exists a unique semi-standard Young tableau $Q_0 \in \Tab(\lambda/\mu,\Phi)$ such that
\begin{enumerate}
    \item $\Read(Q_0)$ is a Yamanouchi word.
    \item Under the column insertion (Theorem~\ref{Theorem:Burge}), the biword, $ \begin{bmatrix}
        \mathbf{b}(\lambda/\mu) \\
        \Read(Q_0) \\
\end{bmatrix} $ 
maps to $ (T_{\sh(Q)}, Q)$, where $\mathbf{b}(\lambda/\mu)$ denotes the word $\mathbf{b}(\lambda-\mu)$.
\end{enumerate}
\end{definition}

For any $(\lambda/\mu, \Phi)$-compatible tableau $Q$, following \cite[Appendix]{KRSV}, we define
$$\mathcal{A}(Q, \lambda/\mu,\Phi):=\{ T \in \Tab(\lambda/\mu,\Phi) :  
(\begin{bmatrix}
           \mathbf{b}(\lambda/\mu) \\
           \Read(T) \\
\end{bmatrix} \rightarrow \emptyset) = (\rect (T), Q)\},$$
where $\rect(T)$ denotes the unique semi-standard Young tableau Knuth equivalent to $\Read(T)$.

Consequently, we have the disjoint union decomposition: $$\Tab(\lambda/\mu,\Phi)=\displaystyle\bigsqcup_{Q}\mathcal{A}(Q, \lambda/\mu,\Phi),$$ where $Q$ runs over all $(\lambda/\mu, \Phi)$-compatible tableaux.

For a $(\lambda/\mu, \Phi)$-compatible tableau $Q$, 
let $\beta(Q)$ denote the weight of the left key tableau $K_{\_}(Q)$ \cite{Willis:left-key,Mrigendra:left-key} of $Q$. Then we have the following proposition  
\begin{proposition}
\begin{upshape}
    \cite[Proposition A.7]{KRSV}
\end{upshape}
\label{Prop:Alco}
There exists 
a bijection
$$\Omega: \mathcal{A}(Q, \lambda/\mu, \Phi) \rightarrow \mathcal{W}(\beta(Q), \Phi)$$ such that, if $ T \mapsto \Omega (T)$, then $\Read(T)$ and $\Omega{(T)}$ are Knuth equivalent.    
\end{proposition}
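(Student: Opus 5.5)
The plan is to define $\Omega$ through the column-insertion tableau. Since every $T \in \mathcal{A}(Q,\lambda/\mu,\Phi)$ has recording tableau exactly $Q$ under column insertion of $\begin{bmatrix}\mathbf{b}(\lambda/\mu)\\ \Read(T)\end{bmatrix}$, Theorem~\ref{Theorem:Burge} shows that the map $T \mapsto \rect(T)$ is injective on $\mathcal{A}(Q,\lambda/\mu,\Phi)$. Similarly, every word $\mathbf{w} \in \mathcal{W}(\beta(Q),\Phi)$ has recording tableau $\text{key}(\beta(Q))$ with respect to $\mathbf{b}(\beta(Q))$, so $\mathbf{w} \mapsto P(\mathbf{w})$ is injective on $\mathcal{W}(\beta(Q),\Phi)$. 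I would therefore set $\Omega(T)$ to be the unique $\mathbf{w} \in \mathcal{W}(\beta(Q),\Phi)$ with $P(\mathbf{w})=\rect(T)$. Knuth equivalence is then automatic, since $\cw(P(\mathbf{w}))\sim \mathbf{w}$ and $\rect(T)\sim\Read(T)$. Thus the whole statement reduces to the equality of images
$$\{\rect(T): T\in\mathcal{A}(Q,\lambda/\mu,\Phi)\}=\{P(\mathbf{w}):\mathbf{w}\in\mathcal{W}(\beta(Q),\Phi)\}.$$

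Next I would identify each side with the same subset of $\Tab_n(\nu)$, where $\nu=\sh(Q)$. For the left side, I would first view $\mathcal{A}(Q,\lambda/\mu,\Phi)$ inside the full crystal. By Proposition~\ref{Prop:RS}, crystal operators on $\Read(T)$ act on the insertion tableau and leave $Q$ unchanged. Hence, ignoring the flag, the set of $T$ with recording tableau $Q$ is a copy of $\Tab_n(\nu)$ via $\rect$, with highest weight element $Q_0$ as in Definition~\ref{def:ogct}. Imposing the flag cuts this down, and the Reiner--Shimozono analysis (the one behind \cite[Theorem~20]{RS}) identifies the surviving $\rect(T)$ as those tableaux $P$ whose right key satisfies $K_+(P)\le \text{key}(\beta(Q))$ in the Bruhat/key order. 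This is exactly the Demazure crystal $\mathcal{B}_w(\nu)$ with $w.\nu=\beta(Q)$, by the Lascoux--Schützenberger characterization of Demazure crystals via right keys. The left key $K_-(Q)$ enters here because flag conditions on the recording side translate, through the symmetry of column insertion, into the condition that the $P$-side lies in the Demazure crystal indexed by the weight of $K_-(Q)$.

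For the right side I would invoke the Reiner--Shimozono theory of frank words. A word $w^{(n)}\cdots w^{(1)}$ made of weakly increasing pieces with decreasing junctions, whose recording tableau is $\text{key}(\alpha)$, has $P(\mathbf{w})$ ranging over $\mathcal{B}_w(\alpha^\dagger)$ with $w.\alpha^\dagger=\alpha$. Adding the row bounds $\Phi_i$ should cut out precisely the same key condition, because for a key recording tableau the bounds $\Phi_i$ on the $i$-th piece correspond to bounds on the columns of the right key. To check that the flag constraints on both sides match, I would argue by induction on $n$, peeling off the last piece $w^{(n)}$ (respectively the bottom row of $T$) and comparing the bumped entries.

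I expect the main obstacle to be this last compatibility: showing that the flag condition on $T$, transported through $Q$ and its left key, matches the flag condition on the pieces of $\mathbf{w}$. This is the step where the choice of $\beta(Q)$ as the weight of $K_-(Q)$, rather than some other key, must be justified. As a fallback, I would compare generating functions. Both images are unions of Demazure-type sets inside $\Tab_n(\nu)$ that are stable under the relevant $e_i$. If one can establish the inclusion of the right side in the left side directly, then equality follows from equality of characters, using \cite[Theorem~20]{RS} for the left side and $\kappa_{\beta(Q)}$ for the right side.
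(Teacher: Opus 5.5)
Your reduction is sound. The top words $\mathbf{b}(\lambda/\mu)$ and $\mathbf{b}(\beta(Q))$ are fixed and both biwords have the form $w_A$, so Theorem~\ref{Theorem:Burge} makes $T\mapsto\rect(T)$ and $\mathbf{w}\mapsto P(\mathbf{w})$ injective. The statement is therefore equivalent to the equality of the two images; the paper gives no proof of its own and imports the result from KRSV. The gap is in how you identify that common image.

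It is not the Demazure crystal indexed by $\beta(Q)$: the flag changes the index to the $\widehat{\beta(Q)}$ of Theorem~\ref{theorem:RS-hat}. Take $n=2$, $\lambda=(1,0)$, $\mu=(0,0)$, $\Phi=(2,2)$, and write $[j]$ for the one-box tableau with entry $j$. Then:
\begin{enumerate}
\item $Q=[1]=K_-(Q)$ and $\beta(Q)=(1,0)$;
\item $\mathcal{A}(Q,\lambda/\mu,\Phi)=\{[1],[2]\}$ and $\mathcal{W}((1,0),\Phi)=\{1,2\}$, so both images are all of $\Tab_2((1))$;
\item but $\mathcal{B}_{\mathrm{id}}((1,0))=\{[1]\}$, and $K_+([2])=[2]\not\le[1]=\text{key}(1,0)$.
\end{enumerate}
Here $\widehat{\beta(Q)}=(0,1)$; compare $\beta(\hat R)=(2,0,1)$ with $\widehat{\beta(\hat R)}=(2,1)$ in the paper's example.

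Likewise, the claim that recording tableau $\text{key}(\alpha)$ forces $P(\mathbf{w})$ into $\mathcal{B}(\alpha)$ is the Lascoux--Sch\"utzenberger statement for the standard flag $\Phi_0$ only. In the example above the set is strictly larger than $\mathcal{B}_{\mathrm{id}}((1,0))$, so row bounds do not simply ``cut out the same key condition'' from $\mathcal{B}(\alpha)$. Your fallback has three problems:
\begin{enumerate}
\item it inherits this error, since the character of $\mathcal{W}(\beta(Q),\Phi)$ is $\kappa_{\widehat{\beta(Q)}}$, not $\kappa_{\beta(Q)}$;
\item \cite[Theorem~20]{RS} gives only the sum over all compatible $Q$, not the character of a single $\mathcal{A}(Q,\lambda/\mu,\Phi)$;
\item the inclusion it needs is never proved.
\end{enumerate}

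Correcting the index does not close the argument. The right-hand image is indeed $\mathcal{B}_\tau(\widehat{\beta(Q)}^\dagger)$, by Theorem~\ref{theorem:RS-hat} together with Lascoux--Sch\"utzenberger. The matching statement for the left-hand image, however, is Proposition~\ref{proposition:tableau}. The paper, following KRSV, obtains that \emph{from} the present proposition, so appealing to it, or to ``the Reiner--Shimozono analysis'', is circular. Proposition~\ref{Prop:RS} only shows that $\mathcal{A}(Q,\lambda/\mu,\Phi)$ sits inside a copy of $\Tab_n(\sh(Q))$ closed under the $e_i$. It says nothing about which tableaux survive the flag.

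The missing ingredient is a genuine lemma. For each $P$ of shape $\sh(Q)$, consider the biword with insertion tableau $P$, recording tableau $Q$ and top word $\mathbf{b}(\lambda/\mu)$, and the biword with insertion tableau $P$, recording tableau $K_-(Q)$ and top word $\mathbf{b}(\beta(Q))$. You need that the first has every bottom letter paired with $i$ at most $\Phi_i$ if and only if the second has bottom word in $\mathcal{W}(\beta(Q),\Phi)$. This is exactly what you call the main obstacle and leave at ``should''; it is where the use of the left key has to be justified. Your proposed induction on $n$ is not carried out, and it is unclear how to compare bumped entries when the two biwords have different top words and $K_-(Q)$ depends globally on $Q$.
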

We now recall the following two important theorems.
\begin{theorem}
\begin{upshape} \cite[Theorem 21]{RS} \end{upshape}
\label{theorem:RS-hat}
Let $\beta \in \mathbb{Z}_+^{n}$, $\Phi \in \mathcal{F}[n]$ and $\Phi_0$ denotes the standard flag $(1,2,\dots,n)$. Then if $\mathcal{W}(\beta, \Phi)$ is non-empty, then there exists $\widehat{\beta} \in \mathbb{Z}_+^{n}$ satisfyig $\beta^{\dagger} = \widehat{\beta}^{\dagger}$ and a bijection 
$$\zeta:\mathcal{W}(\beta, \Phi) \rightarrow  \mathcal{W}(\widehat{\beta}, \Phi_0) ,$$ such that if $\mathbf{u} \mapsto \zeta (\mathbf{u})$ then $\mathbf{u}$ and $\zeta (\mathbf{u})$ are Knuth equivalent. 
\end{theorem}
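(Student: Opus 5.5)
The plan is to pass from words to their insertion tableaux, identify both sides with the same Demazure crystal, and define $\zeta$ as the composite of the two identifications. Fix $\beta$ with $\mathcal{W}(\beta,\Phi)\neq\emptyset$. For $\mathbf{u}\in\mathcal{W}(\beta,\Phi)$ write $P(\mathbf{u})$ for the insertion tableau of $\begin{bmatrix}\mathbf{b}(\beta)\\ \mathbf{u}\end{bmatrix}$. By the third defining condition of $\mathcal{W}(\beta,\Phi)$, the recording tableau is always $\mathrm{key}(\beta)$. Theorem~\ref{Theorem:Burge} is a bijection between matrices (equivalently, biwords with top row $\mathbf{b}(\beta)$) and pairs $(P,Q)$. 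Hence $\mathbf{u}\mapsto P(\mathbf{u})$ is injective on $\mathcal{W}(\beta,\Phi)$, and $\mathbf{u}\sim\cw(P(\mathbf{u}))$ by construction. Set
$$\mathcal{P}(\beta,\Phi):=\{P(\mathbf{u}) : \mathbf{u}\in \mathcal{W}(\beta,\Phi)\}\subseteq \Tab_n(\beta^{\dagger}).$$
It then suffices to find $\widehat{\beta}$ with $\widehat{\beta}^{\dagger}=\beta^{\dagger}$ and $\mathcal{P}(\beta,\Phi)=\mathcal{P}(\widehat{\beta},\Phi_0)$. With that equality, define $\zeta$ as the composite of $P$ on $\mathcal{W}(\beta,\Phi)$ with the inverse of $P$ on $\mathcal{W}(\widehat{\beta},\Phi_0)$. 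It is a bijection and preserves the Knuth class, since both words are Knuth equivalent to the column word of the common tableau.

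To produce $\widehat{\beta}$, I would show that $\mathcal{P}(\beta,\Phi)$ is a Demazure crystal $\mathcal{B}_w(\beta^{\dagger})$, and take $\widehat{\beta}:=w.\beta^{\dagger}$.

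The first step is to compute $\mathcal{P}(\widehat{\beta},\Phi_0)$ for the standard flag. There the constraints force the blocks $w^{(i)}$ to behave like the columns of a key, and I expect a direct argument to identify $\mathcal{P}(\gamma,\Phi_0)$ with $\mathcal{B}_{w}(\gamma^{\dagger})$ for $w.\gamma^{\dagger}=\gamma$. For example, I would induct on $n$ by stripping the block $w^{(n)}$, whose letters may be anything up to $n$, and matching this with the inductive description
$$\mathcal{B}_{s_{n-1}\cdots s_j w'}(\lambda)=\{f_{n-1}^{m_{n-1}}\cdots f_j^{m_j}\,\mathcal{B}_{w'}(\lambda)\}.$$
The second step is to show that for a general flag $\Phi$, $\mathcal{P}(\beta,\Phi)$ contains $T_{\beta^{\dagger}}$ and is closed under the $f_i$-strings that build a Demazure crystal. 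The tool is Proposition~\ref{Prop:RS} applied to the biword: $h_i$ acting on $\mathbf{u}$ acts on $P$ and leaves $Q=\mathrm{key}(\beta)$ fixed. One must check that $f_i$ or $e_i$, applied to a word of the block form, keeps it in block form with the flag bounds, whenever the relevant index $i$ is allowed by the flag. This will be done by induction on the number of distinct values of $\Phi$, merging rows with equal flag values.

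The main obstacle is this closure step, namely that the image is exactly a Demazure crystal rather than merely a union of partial $i$-strings. Two things are needed: the signature rule must not break the inequality ``last letter of $w^{(i)}$ exceeds first letter of $w^{(i-1)}$'', and each $i$-string must be either fully contained in the image or meet it only at its highest element. I would first try to reduce to the case where $\Phi$ differs from $\Phi_0$ in a single position, $\Phi=(1,\dots,k-1,k+1,k+1,\dots)$. In that case one verifies directly that $\mathcal{P}(\beta,\Phi)=\mathcal{B}_{s_k}\cdot\mathcal{P}(\beta',\Phi_0)$ for a suitable rearrangement $\beta'$, and iterates; the Demazure property is then inherited from the standard-flag case of the first step.
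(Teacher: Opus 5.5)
\textbf{There is a genuine gap.} Your reduction itself is sound. The recording tableau is pinned to $\text{key}(\beta)$ and Theorem~\ref{Theorem:Burge} is a bijection, so $\mathbf{u}\mapsto P(\mathbf{u})$ is injective on $\mathcal{W}(\beta,\Phi)$. The statement is therefore equivalent to $\mathcal{P}(\beta,\Phi)=\mathcal{P}(\widehat{\beta},\Phi_0)$. Your ``first step'' is exactly the Lascoux--Sch\"utzenberger result the paper quotes right after this theorem, so you may cite it. The paper gives no proof of this theorem; it imports it from Reiner--Shimozono.

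After the reduction, however, ``$\mathcal{P}(\beta,\Phi)$ is a Demazure crystal'' is not an intermediate lemma. It is the entire content of the theorem, and the proposal does not establish it. Your tools give only the easy half: $e_i$ keeps the block form and lowers a letter, so by Proposition~\ref{Prop:RS} the set $\mathcal{P}(\beta,\Phi)$ is $e_i$-stable and contains $T_{\beta^{\dagger}}$. Which $f_i$-strings lie entirely inside is not decided by whether ``the flag allows $i$''. For $n=2$ and $\Phi=\Phi_0$, the set $\mathcal{P}((2,1),\Phi_0)=\{T_{(2,1)}\}$ is not $f_1$-stable, while $\mathcal{P}((1,2),\Phi_0)=\Tab_2((2,1))$ is. This joint dependence on $(\beta,\Phi)$ is exactly what $\widehat{\beta}$ records, and you never produce it. Also, the string property you invoke is necessary but not sufficient, since unions of Demazure crystals have it too.

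The fallback via single-entry changes of the flag does not fill the gap.
\begin{itemize}
\item No $\beta'$ is given even in the base case.
\item The iteration needs an identity $\mathcal{P}(\beta,\Phi)=\{f_j^m x : x\in\mathcal{P}(\beta',\Phi'),\ m\ge 0\}$ for an elementary raise $\Phi'\to\Phi$ between non-standard flags, and this fails in general. Take $\Phi'=(2,2,3)$, $\Phi=(2,3,3)$, $\beta=(2,1,0)$, so $j=2$. Then $\mathcal{P}(\beta,\Phi)$ is the set of tableaux with rows $a\,b$ and $y$, where $a\le b\le 2$ and $a<y\le 3$; this is $\mathcal{B}_{s_1s_2}((2,1,0))$. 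It contains the tableau with rows $1\,2$ and $2$ but not its $f_2$-image, the tableau with rows $1\,3$ and $2$. Any set of the form $\{f_2^m x\}$ is $f_2$-stable, so no $\beta'$ works for this link. The chain through $(1,3,3)$ does work here, so the order of the raises is a real issue you leave open.
\item Raising entries of $\Phi_0$ never reaches flags with $\Phi_i<i$, such as $(1,1,3)\in\mathcal{F}[3]$. These are exactly the flags the paper feeds into the theorem, e.g.\ $\Phi^{(0,1)}=(1,2,2)$.
\end{itemize}

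Your passing remark about merging blocks with equal flag value points in a workable direction. When $\Phi_i=\Phi_{i+1}$, the crystal reflection acting on the top-row letters of the biword fixes $P$ and respects the flag. It must send $\text{key}(\beta)$ to $\text{key}(s_i\beta)$, the unique tableau of that shape and weight. For a flag that is strictly increasing on the nonempty blocks, relabelling block $i$ as block $\Phi_i$ identifies $\mathcal{W}(\beta,\Phi)$ with some $\mathcal{W}(\beta',\Phi_0)$. What is missing is the argument connecting these two situations: an explicit rule producing $\widehat{\beta}$ from $(\beta,\Phi)$, with a proof valid for every flag.
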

\begin{theorem}
\begin{upshape} \cite[Proposition 5.6]{LS-keys} \end{upshape}
Let $\alpha = \tau.\alpha ^{\dagger}$. Then the map sending $\mathbf{u} \in \mathcal{W}(\alpha, \Phi_0)$ to the unique semi-standard Young tableau $ P(\mathbf{u})$ Knuth equivalent to $\mathbf{u}$, induces a bijection between $ \mathcal{W}(\alpha, \Phi_0)$ and the Demazure crystal $\mathcal{B}_{\tau}(\alpha^{\dagger})$.
\end{theorem}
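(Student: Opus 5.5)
The statement is a cited result of Lascoux--Sch\"utzenberger. I would prove it in two stages: first show that $\mathbf{u}\mapsto P(\mathbf{u})$ is injective with image inside $\Tab_n(\alpha^{\dagger})$, then identify the image with $\mathcal{B}_{\tau}(\alpha^{\dagger})$ by induction on the length of $\tau$, using Proposition~\ref{Prop:RS} to transport crystal operators.

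\emph{Step 1 (injectivity and the shape of the image).} Take $\mathbf{u}\in\mathcal{W}(\alpha,\Phi_0)$. By definition
$(\begin{bmatrix}\mathbf{b}(\alpha)\\ \mathbf{u}\end{bmatrix}\to\emptyset)=(P,\text{key}(\alpha))$.
\begin{itemize}
\item By the column insertion step, $P$ is Knuth equivalent to $\mathbf{u}$, so $P=P(\mathbf{u})$.
\item Since $\text{key}(\alpha)$ has shape $\alpha^{\dagger}$, so does $P(\mathbf{u})$.
\item The top row $\mathbf{b}(\alpha)$ and the recording tableau $\text{key}(\alpha)$ are fixed. By the bijectivity in Theorem~\ref{Theorem:Burge}, the biword, and hence $\mathbf{u}$, is recovered from $P(\mathbf{u})$. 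So the map is injective.
\end{itemize}
Now drop the flag bound (allow letters up to $n$ in every segment) but keep the other two conditions. The same Burge argument then gives a bijection of this larger set $\widetilde{\mathcal{W}}(\alpha)$ onto all of $\Tab_n(\alpha^{\dagger})$: each $P$ of shape $\alpha^{\dagger}$ paired with $\text{key}(\alpha)$ comes from some matrix, and its biword has top row $\mathbf{b}(\alpha)$. The real content is therefore that the flag condition (letters of $w^{(i)}$ at most $i$) cuts out exactly $\mathcal{B}_{\tau}(\alpha^{\dagger})$.

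\emph{Step 2 (crystal compatibility).} I would transfer the type $A_{n-1}$ crystal of Example~\ref{ex:crystal} to $\widetilde{\mathcal{W}}(\alpha)$. Proposition~\ref{Prop:RS} is stated with top row $\mathbf{w}_{[p]}$. I would extend it to top row $\mathbf{b}(\alpha)$: applying $e_i$ or $f_i$ to the bottom word changes $P$ by the same operator and leaves $Q$ unchanged. Hence $\widetilde{\mathcal{W}}(\alpha)$ is closed under $e_i,f_i$ (away from $0$), and $\mathbf{u}\mapsto P(\mathbf{u})$ is a crystal isomorphism onto $\Tab_n(\alpha^{\dagger})$. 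It then suffices to show that the flagged subset $\mathcal{W}(\alpha,\Phi_0)$ equals $\{f_{i_1}^{m_1}\cdots f_{i_p}^{m_p}\mathbf{u}_0\}$. Here $\mathbf{u}_0$ is the word with $P(\mathbf{u}_0)=T_{\alpha^{\dagger}}$, and $s_{i_1}\cdots s_{i_p}$ is a reduced word for $\tau$.

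\emph{Step 3 (induction on $\ell(\tau)$, the main obstacle).}
\begin{itemize}
\item \emph{Base case.} If $\tau=\mathrm{id}$, then $\alpha$ is a partition. The flag forces $w^{(1)}=1^{\alpha_1}$, and inductively each $w^{(i)}$ is forced, so the set is the single word $\mathbf{u}_0$.
\item \emph{Inductive step.} Suppose $\alpha_i<\alpha_{i+1}$, so $\alpha=s_i\alpha'$ with $\ell(s_i\tau')>\ell(\tau')$. I must show
\[
\mathcal{W}(\alpha,\Phi_0)=\{f_i^{k}\mathbf{u}' : \mathbf{u}'\in\mathcal{W}(\alpha',\Phi_0),\ k\ge 0\}\setminus\{0\},
\]
after identifying both sides inside $\Tab_n(\alpha^{\dagger})$ via $P$. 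I would first try to compare the words directly, segment by segment. Swapping the lengths of $w^{(i)}$ and $w^{(i+1)}$, together with raising the flag on the $i$-th segment from $i$ to $i+1$, should correspond exactly to closing under $f_i$-strings. That holds provided the signature rule only sees the letters $i,i+1$ sitting in segments $i$ and $i+1$.
\item \emph{Fallback.} If the direct comparison gets messy, I would compare characters instead. By the induction hypothesis and the Demazure crystal property, the right-hand side is a union of $i$-strings with character $D_i\,\key_{\alpha'}=\key_{\alpha}$. One inclusion, that $f_i$ preserves the flag condition on the $i$-th segment, follows from the signature rule. Equality then follows from an equal count, obtained from a direct weight-generating-function computation on $\mathcal{W}(\alpha,\Phi_0)$.
\end{itemize}
This flag-versus-Demazure identification is the step I expect to be hardest.
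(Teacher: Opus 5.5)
\textbf{Gap.} The paper gives no proof of this statement; it is quoted from Lascoux--Sch\"utzenberger. So your argument must stand alone. The genuine gap is Step 3, which carries the whole content of the theorem, and neither of your branches proves it.

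\emph{The direct comparison.} It is made conditional on the $i$-signature seeing only letters from segments $i$ and $i+1$. Under $\Phi_0$ that is false: every segment $k\ge i+2$ may contain both $i$ and $i+1$. For $\alpha=(0,1,2)$ and $i=1$, the word $23\cdot 1$ lies in $\mathcal{W}(\alpha,\Phi_0)$ (its recording tableau is $\mathrm{key}(\alpha)$), and its $1$ in segment $2$ is bracketed by the $2$ in segment $3$. Moreover, $\mathcal{W}(\alpha,\Phi_0)$ and $\mathcal{W}(\alpha',\Phi_0)$ consist of differently factored words with different recording conditions. Nothing can be compared segment by segment until you have a map between the $\alpha'$- and $\alpha$-factorizations of one plactic class.

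\emph{The fallback.} It is circular. A ``direct'' computation of $\sum_{\mathbf{u}\in\mathcal{W}(\alpha,\Phi_0)}\mathbf{x}^{\wt(\mathbf{u})}=\kappa_\alpha$ is the character form of the theorem itself, and you indicate no route to it. The claimed inclusion also does not follow from the signature rule alone. The segments to the right of segment $i$ contain no letter $i$. So $f_i$ respects the flag on segment $i$ only if every $i$ there is bracketed by an $i+1$ to its left. That needs the key condition together with the flags on the other segments: the word $33\cdot 1$ has recording tableau $\mathrm{key}(1,2)$ yet an unbracketed $1$ in segment $1$. Even granting closure under $f_i$, the inclusion $\{f_i^k b : k\ge 0,\ b\in P(\mathcal{W}(\alpha',\Phi_0))\}\setminus\{0\}\subseteq P(\mathcal{W}(\alpha,\Phi_0))$ still needs $P(\mathcal{W}(\alpha',\Phi_0))\subseteq P(\mathcal{W}(\alpha,\Phi_0))$. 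That is the same missing comparison.

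\textbf{Missing idea.} What you need is that comparison between factorizations. One workable form is as follows.
\begin{enumerate}
\item Take $\mathbf{u}'\in\widetilde{\mathcal{W}}(\alpha')$. The word of $\widetilde{\mathcal{W}}(\alpha)$ with the same $P$ is obtained by replacing only $w^{(i+1)}w^{(i)}$ by the Knuth-equivalent pair of weakly increasing words of swapped lengths. This works because $\mathrm{key}(\alpha)$ and $\mathrm{key}(\alpha')$ coincide away from the labels $i,i+1$, and the filling by those two labels is forced. Hence the flags on all other segments are untouched.
\item What remains is a two-row bracketing argument that accounts for the higher segments. It must show that the condition ``no $i+1$ in segment $i$ of the $\alpha$-factorization'' selects, after applying $P$, exactly the $i$-strings meeting $P(\mathcal{W}(\alpha',\Phi_0))$.
\end{enumerate}
The classical alternative is Lascoux--Sch\"utzenberger's route through right keys, $\mathcal{B}_\tau(\lambda)=\{T : K_+(T)\le \mathrm{key}(\tau.\lambda)\}$, together with a lemma reading the flag condition off $K_+(P(\mathbf{u}))$.

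\textbf{Steps 1--2 and the base case.} Steps 1--2 are fine once two checks are supplied.
\begin{itemize}
\item The descent condition between nonempty segments is automatic from $Q=\mathrm{key}(\alpha)$. Otherwise the column bumping lemma would place the first box labelled $i$ strictly right of a box in column $\ge 1$, whereas in $\mathrm{key}(\alpha)$ it lies in column $1$.
\item $e_i$ and $f_i$ preserve weakly increasing segments. Inside a segment, the unbracketed $i$'s are its rightmost $i$'s and the unbracketed $(i+1)$'s are its leftmost ones.
\end{itemize}
The base case needs repair. The flag does not force the segments: for $\alpha=(2,2)$ the word $12\cdot 11$ satisfies the flag and is excluded only by the key condition. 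Argue by counting instead. The flag gives at least $\alpha_1+\cdots+\alpha_k$ letters $\le k$, while a tableau of partition shape $\alpha$ has at most that many. Hence $P(\mathbf{u})=T_\alpha$.
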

Then combining the above two theorems with Proposition~\ref{Prop:Alco}, we have a Demazure crystal structure for $\Tab(\lambda/\mu,\Phi)$, due to the following propostion.
\begin{proposition}
\begin{upshape}
    \cite[Proposition A.9]{KRSV}
\end{upshape}
\label{proposition:tableau}
The rectification map $ \rect: \mathcal{A}(Q, \lambda/\mu,\Phi) \rightarrow  \mathcal{B}_{\tau}(\widehat{\beta(Q)}^{\dagger})$ is a weight-preserving bijection that commutes with $e_i,f_i$, for each $i$, where $\tau$ is any permutation satisfying $\tau. \widehat{\beta(Q)}^{\dagger}=\widehat{\beta(Q)}$. In 
other words,
$$\Tab(\lambda/\mu,\Phi) \cong \displaystyle\bigsqcup_{Q} \mathcal{B}_{\tau} (\widehat{\beta(Q)}^{\dagger}.$$,
\end{proposition}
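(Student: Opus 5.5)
The plan is to realize $\rect$ on $\mathcal{A}(Q,\lambda/\mu,\Phi)$ as the composite of the three bijections already available, and then transfer the crystal operators using the fact that they are coplactic. Concretely, for $T \in \mathcal{A}(Q,\lambda/\mu,\Phi)$, consider
$$T \;\mapsto\; \Omega(T)\in \mathcal{W}(\beta(Q),\Phi) \;\mapsto\; \zeta(\Omega(T)) \in \mathcal{W}(\widehat{\beta(Q)},\Phi_0) \;\mapsto\; P\big(\zeta(\Omega(T))\big)\in \mathcal{B}_{\tau}(\widehat{\beta(Q)}^{\dagger}).$$
These maps come from Proposition~\ref{Prop:Alco}, Theorem~\ref{theorem:RS-hat} and \cite[Proposition 5.6]{LS-keys} respectively. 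Each is a bijection, and each preserves the Knuth class of the word: $\Read(T)\sim\Omega(T)\sim\zeta(\Omega(T))\sim \cw(P(\zeta(\Omega(T))))$. A Knuth class contains a unique semi-standard Young tableau, so the composite equals $\rect(T)$. Hence $\rect$ is a bijection $\mathcal{A}(Q,\lambda/\mu,\Phi)\to\mathcal{B}_\tau(\widehat{\beta(Q)}^{\dagger})$. It is weight-preserving because Knuth moves preserve content and $\wt(T)$ is the content of $\Read(T)$.

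For the compatibility with $e_i,f_i$, I would first work without the flag. Let $\mathcal{A}(Q,\lambda/\mu)$ be the set of all $T\in\Tab_n(\lambda/\mu)$ whose biword $\begin{bmatrix}\mathbf{b}(\lambda/\mu)\\ \Read(T)\end{bmatrix}$ has recording tableau $Q$. I claim this set is closed under $e_i,f_i$, and that $\rect$ restricted to it is a crystal isomorphism onto the full crystal $\Tab_n(\sh(Q))$.

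Closure follows from Proposition~\ref{Prop:RS}, after standardizing the top row $\mathbf{b}(\lambda/\mu)$ to $\mathbf{w}_{[p]}$. The step to check is that recording tableaux for the standardized top row determine those for $\mathbf{b}(\lambda/\mu)$ compatibly: $h_i$ preserves the descent pattern of the row blocks, since it never creates a decrease inside a weakly increasing row word in a way that changes the shape growth. Proposition~\ref{Prop:RS} also gives $P(h_i\cdot\mathbf{u})=h_i\cdot P(\mathbf{u})$, i.e. $\rect(h_iT)=h_i\rect(T)$. Injectivity of $\rect$ on $\mathcal{A}(Q,\lambda/\mu)$ holds because $(P,Q)$ determines the biword, by Theorem~\ref{Theorem:Burge}.

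Now restore the flag. Take $T\in\mathcal{A}(Q,\lambda/\mu,\Phi)$ and suppose $e_iT\neq 0$. A Demazure crystal is closed under $e_i$, so $e_i\rect(T)\in\mathcal{B}_\tau(\widehat{\beta(Q)}^{\dagger})$. Its $\rect$-preimage $T'$ therefore lies in $\mathcal{A}(Q,\lambda/\mu,\Phi)$. By injectivity on the larger set $\mathcal{A}(Q,\lambda/\mu)$, we get $T'=e_iT$. Thus $\mathcal{A}(Q,\lambda/\mu,\Phi)$ is closed under $e_i$ and $\rect(e_iT)=e_i\rect(T)$.

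For $f_i$, define $f_iT$ on $\mathcal{A}(Q,\lambda/\mu,\Phi)$ to be the unflagged $f_iT$ if it respects $\Phi$, and $0$ otherwise. The same injectivity argument shows that $f_iT$ respects $\Phi$ exactly when $f_i\rect(T)\in\mathcal{B}_\tau(\widehat{\beta(Q)}^{\dagger})$. Hence $\rect$ intertwines the truncated operators, which is precisely the Demazure crystal structure. Taking the disjoint union over $(\lambda/\mu,\Phi)$-compatible $Q$ then gives the displayed isomorphism.

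I expect the main obstacle to be the unflagged step: transferring Proposition~\ref{Prop:RS}, stated for the strictly decreasing top word $\mathbf{w}_{[p]}$, to the top word $\mathbf{b}(\lambda/\mu)$ with repeated letters. Without it we cannot guarantee that $h_i$ preserves $Q$. I would handle it by standardization: show that the recording tableau for $\mathbf{b}(\lambda/\mu)$ is the destandardization of the one for $\mathbf{w}_{[p]}$. This requires that $h_i$ keeps each row block $w^{(j)}$ of $\Read(T)$ weakly increasing, which holds because $h_iT$ is again a tableau, so its row blocks are rows.
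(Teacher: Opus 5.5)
Your proposal is correct and takes essentially the route the paper indicates. The paper gives no proof of its own: it cites \cite[Proposition A.9]{KRSV} after noting that the result comes from combining Proposition~\ref{Prop:Alco}, Theorem~\ref{theorem:RS-hat} and \cite[Proposition 5.6]{LS-keys}, and your identification of the composite $P\circ\zeta\circ\Omega$ with $\rect$ by uniqueness of the tableau in a Knuth class is exactly that combination. Your compatibility argument (closure of the unflagged class $\mathcal{A}(Q,\lambda/\mu)$ via Proposition~\ref{Prop:RS}, then injectivity of $\rect$ to transfer the truncated operators) correctly fills in what the paper leaves to the citation; for $e_i$ closure is even more direct, since $e_i$ only lowers an entry and so cannot violate the flag.
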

Given two skew shapes $\theta, \theta'$, we denote $\theta * \theta'$ by their corner-to-corner concatenation as shown below:
$$
\begin{tikzpicture}[scale=0.9]
  \draw (0,0)--(1.4,0)--(1.4,1.4);
  \draw (0,0)--(0,0.7)--(1.4,0.7);
  \draw (0.7,0)--(0.7,1.4)--(1.4,1.4);
  \node at (-0.6,0.7) {$\theta=$};
  \node at (0,-0.35) {$\null$};
\end{tikzpicture}
\begin{tikzpicture}
    \node at (0,0) {$\null$};
    \node at (0.5,0) {$\null$}; 
\end{tikzpicture}
\begin{tikzpicture}[scale=0.9]
    \draw (0,0) -- (0,1.4)--(1.4,1.4)--(1.4,0.7)--(0.7,0.7)--(0.7,0)--(0,0);
    \draw (0,0.7)--(0.7,0.7)--(0.7,1.4);
    \node at (-0.6,0.7) {$\theta'=$};
    \node at (0,-0.35) {$\null$};
\end{tikzpicture}
\begin{tikzpicture}
    \node at (0,0) {$\null$};
    \node at (0.5,0.5) {$\implies$};
    \node at (0,-0.35) {$\null$};
\end{tikzpicture}
\begin{tikzpicture}
    \draw (0,0)--(1.4,0)--(1.4,1.4);
    \draw (0,0)--(0,0.7)--(1.4,0.7);
    \draw (0.7,0)--(0.7,1.4)--(1.4,1.4);
    \draw (0+1.4,0+1.4) -- (0+1.4,1.4+1.4)--(1.4+1.4,1.4+1.4)--(1.4+1.4,0.7+1.4)--(0.7+1.4,0.7+1.4)--(0.7+1.4,0+1.4)--(0+1.4,0+1.4);
    \draw (0+1.4,0.7+1.4)--(0.7+1.4,0.7+1.4)--(0.7+1.4,1.4+1.4);
    \node at (-1,0.7+0.35) {$\theta * \theta'=$};
\end{tikzpicture}.
$$
Similarly, if $T,T'$ are semi-standard Young tableaux of shapes $\theta, \theta'$ respectively then we define another semi-standard Young tableau $T*T'$ of skew shape $\theta*\theta'$ through corner-to-corner concatenation. For skew shapes $ \theta_1,\theta_2,\theta_3$, we write $\theta_3 * \theta_2 *\theta_1$ to denote $\theta_3 * (\theta_2 * \theta_1)$. Likewise, given semi-standard Young tableaux $T_i$ of shape $\theta_i $ for $i=1,2,3$, then we write $T_3*T_2*T_1$ to denote $T_3*(T_2*T_1)$. 

For a skew shape $\lambda/\mu$ with $\lambda,\mu \in \mathcal{P}_n$ and $\alpha,\beta \in \mathbb{Z}_{+}^{n}$, we set
$$[\lambda/\mu]^{\alpha}_{\beta}:=(\lambda_n-\mu_n-\alpha_{n-1},1^{\beta_n}) *\cdots * (\lambda_2-\mu_2-\alpha_1,1^{\beta_2}) * (\lambda_1-\mu_1,1^{\beta_1}).$$
Then each set-valued reverse plane partition $H$ of shape $\lambda/\mu$ satisfying $\ceq(H)=\alpha$ and $\ex(H)= \beta$ uniquely induces to a semi-standard Young tableau $\Tilde{H} =\Tilde{H}_n * \cdots *\Tilde{H}_2 *\Tilde{H}_1$ of shape $[\lambda/\mu]^{\alpha}_{\beta}$, where each $\Tilde{H}_i$ is the semi-standard Young tableau such that for $i \geq 1$ 
\begin{itemize}
    \item $\sh(\Tilde{H}_i)=(\lambda_i-\mu_i-\alpha_{i-1}, 1^{\beta_i})$. If $\lambda_j-\mu_j=\alpha_{j-1}$ for some $j$ then $\sh(\Tilde{H}_j)=(1^{\beta_j})$. Here we assume $\alpha_0=0$.
    \item $\Read(\Tilde{H}_i)$ is subword of $\Read(H)$ that comes from the $i^{th}$ row of $H$.
\end{itemize}
\begin{example}
Let us assume
$$
\begin{tikzpicture}[scale=1.2]
  \draw (0,0)--(1.6,0)--(1.6,1.6)--(2.4,1.6);
  \draw (0,0)--(0,0.8)--(2.4,0.8)--(2.4,1.6);
  \draw (0.8,0)--(0.8,1.6)--(1.6,1.6);
  \draw (1.6,0)--(2.4,0)--(2.4,0.8);
  \node at (0.4,0.4) {$1,\textcolor{blue}{2}$};
  \node at (1.2,0.4) {$\textcolor{red}{2},\textcolor{blue}{3}$};
  \node at (2,0.4) {$\textcolor{red}{4},\textcolor{blue}{5}$};
  \node at (1.2,1.23) {$2$};
  \node at (2,1.2) {$2,\textcolor{blue}{3,4}$};
  \node at (-0.4,0.8) {$H=$};
\end{tikzpicture}.
$$
In this case, we have $\Read(H) = \textcolor{blue}{532}.1|\textcolor{blue}{43}.22$, which yields $\Read(\Tilde{H}_1)=4332, \Read(\Tilde{H}_2)= 5321$.
Therefore,
$$ \ytableausetup{mathmode,
notabloids}
\Tilde{H}_1= \begin{ytableau}
     2 &2 \\ \textcolor{blue}{3} \\ \textcolor{blue}{4}  
\end{ytableau}
\text{\hspace{0.5 cm} and \hspace{0.5 cm}}
\ytableausetup{mathmode,
notabloids}
\Tilde{H}_2=\begin{ytableau}
      1 \\ \textcolor{blue}{2} \\ \textcolor{blue}{3} \\\textcolor{blue}{5}
\end{ytableau}.$$
\end{example}
We define
$$\SVRPP^{\alpha}_{\beta}(\lambda/\mu, \Phi):= \{ H \in \SVRPP(\lambda/\mu, \Phi): \ceq(H)=\alpha, \ex(H)=\beta \}.$$
Then if $H \in \SVRPP^{\alpha}_{\beta}(\lambda/\mu, \Phi)$ and $\Tilde{H}=\Tilde{H}_n*\cdots * \Tilde{H}_2 * \Tilde{H}_1$ then $\Tilde{H}_i$ respects the flag $(\underbrace{\phi_i,\dots,\phi_i}_{\beta_i+1})$ for $1 \leq i \leq n$. Thus $\Tilde{H}\in \Tab([\lambda/\mu]^{\alpha}_{\beta},\Phi^{\beta}),$ where $$\Phi^{\beta}:=(\underbrace{\phi_1,\dots,\phi_1}_{\beta_1+1},\underbrace{\phi_2,\dots,\phi_2}_{\beta_2+1},\dots,\underbrace{\phi_n,\dots,\phi_n}_{\beta_n+1} ).$$
\begin{definition}
\label{def:comp}
Let $Q$ be a highest weight element in $ \SVRPP^{\alpha}_{\beta}(\lambda/\mu, \Phi)$. Then $\Tilde{Q}=\Tilde{Q}_n * \cdots * \Tilde{Q}_2 *\Tilde{Q}_1$ and $\sh(\Tilde{Q})=[\lambda/\mu]^{\alpha}_{\beta}$. Now if the column insertion (Theorem~\ref{Theorem:Burge}) maps the biword $
\begin{bmatrix}
 \mathbf{b}\big([\lambda/\mu]^{\alpha}_{\beta}\big) \\
 \Read(\Tilde{Q}) \\
\end{bmatrix}$ to the pair $(\rect (\Tilde{Q}), \hat{Q})$, then we say $\hat{Q}$ is a $(\lambda/\mu,\Phi)$-compatible tableau for SVRPP.

We also define $\overline{\ceq}(\hat{Q}):=\ceq(Q)=\alpha$ and $\overline{\ex}(\hat{Q}):=\ex(Q)=\beta$. We denote $\CT^{\alpha}_{\beta}(\lambda/\mu,\Phi)$ by the set of all $(\lambda/\mu,\Phi)$-compatible tableaux for SVRPP $\hat{Q}$ such that $\overline{\ceq}(\hat{Q})=\alpha$ and $\overline{\ex}(\hat{Q})=\beta$. 
\end{definition}
\bremark
If $\hat{Q}\in \CT^{\mathbf{0}}_{\mathbf{0}}(\lambda/\mu,\Phi)$ then $\hat{Q}$ is a $(\lambda/\mu,\Phi)$-compatible tableau, as mentioned in Definition~\ref{def:ogct}. 
\eremark
\begin{example}
Let $\lambda = (4,2,2), \mu =(2,1,0), \alpha=(1,1,0), \beta=(1,2,1), \Phi=(2,3,4)$. Also, we take
$$ Q= \ytableausetup{mathmode,
notabloids}
\begin{ytableau}
    \none & \none &1 &1,\textcolor{blue}{2}\\ \none & 1,\textcolor{blue}{2} & 2 & \textcolor{red}{2},\textcolor{blue}{3}\\ 1 &\textcolor{red}{2},\textcolor{blue}{4}  
\end{ytableau} \in \SVRPP^{\alpha}_{\beta}(\lambda/\mu, \Phi).
$$
Thus $\Read(Q)=\textcolor{blue}{4}1\textcolor{blue}{32}12\textcolor{blue}{2}11$ and it is easy to see that $\Read(Q)$ is a Yamanouchi word. So by Proposition~\ref{Prop:Yama}, $Q$ is a highest weight element in $\SVRPP^{\alpha}_{\beta}(\lambda/\mu, \Phi)$. Also, $\Read(\Tilde{Q}_1)=211, \Read(\Tilde{Q}_2) = 3212$ and $ \Read(\Tilde{Q}_3)=41$. Hence $\Tilde{Q}=\Tilde{Q}_3 * \Tilde{Q}_2 * \Tilde{Q}_1$, where 
$$
\Tilde{Q}_1= \ytableausetup{mathmode,
notabloids}
\begin{ytableau}
 1 & 1 \\2
\end{ytableau} \hspace{1 cm} 
\Tilde{Q}_2= \ytableausetup{mathmode,
notabloids}
\begin{ytableau}
    1 &2 \\ 2 \\3 
\end{ytableau} \hspace{1 cm}
\Tilde{Q}_3= \ytableausetup{mathmode,
notabloids}
\begin{ytableau}
    1 \\4   
\end{ytableau}.$$
Now, using Theorem~\ref{Theorem:Burge}, we obtain
$$\Big(
\begin{bmatrix}
    \mathbf{b}\big([\lambda/\mu]^{\alpha}_{\beta}\big) \\
    \Read(\Tilde{Q}) \\
\end{bmatrix} \rightarrow \emptyset \Big)
=(Q',\hat{Q}),$$
where 
$$Q'=\ytableausetup{mathmode,
notabloids}
  \begin{ytableau}
    1 &1 &1 &1 \\2 &2 &2 \\3 \\4  
  \end{ytableau}=\rect(\Tilde{P}) 
\text{ and }  
\hat{Q}
=\ytableausetup{mathmode,
notabloids}
  \begin{ytableau}
    1 &1 &3 &6\\2 &3 &4 \\ 5 \\7
  \end{ytableau}.$$
So $\hat{Q} = \ytableausetup{mathmode,
notabloids}
\begin{ytableau}
    1 &1 &3 &6\\2 &3 &4 \\ 5 \\7
\end{ytableau}$
is a $(\lambda/\mu, \Phi)$-compatible tableau for SVRPP such that $\overline{\ceq}(\hat{Q})=\ceq(Q)=(1,1,0)$ and $\overline{\ex}(\hat{Q})=\ex(Q)=(1,2,1)$.
\end{example}
For any highest weight element $ Q \in \SVRPP^{\alpha}_{\beta}(\lambda/\mu, \Phi)$, let $ \SVRPP^{\alpha}_{\beta}(\lambda/\mu, \Phi;Q)$ be the connected component of the crystal graph of $\SVRPP^{\alpha}_{\beta}(\lambda/\mu, \Phi)$ containing $Q$. Then 
\begin{equation}
\label{eq:Demazure:excess}
\SVRPP^{\alpha}_{\beta}(\lambda/\mu, \Phi) = \bigsqcup_{Q} \SVRPP^{\alpha}_{\beta}(\lambda/\mu, \Phi;Q),
\end{equation}
$Q$ runs over all highest weight elements in $\SVRPP^{\alpha}_{\beta}(\lambda/\mu, \Phi) $.
Our goal is now to prove that $\SVRPP^{\alpha}_{\beta}(\lambda/\mu, \Phi;P)$ is isomorphic to a Demazure crystal, which implies $\SVRPP^{\alpha}_{\beta}(\lambda/\mu, \Phi)$ admits a Demazure crystal structure. To this end, we define the following map 
$$\Theta : \SVRPP^{\alpha}_{\beta}(\lambda/\mu, \Phi;Q) \rightarrow \mathcal{A}(\hat{Q}, [\lambda/\mu]^{\alpha}_{\beta},\Phi^{\beta}) \text{ by }
H \mapsto \Tilde{H} =\Tilde{H}_n * \cdots * \Tilde{H}_2*\Tilde{H}_1.$$
Together with Proposition~\ref{proposition:tableau}, the following proposition endows a Demazure crystal structure on $\SVRPP^{\alpha}_{\beta}(\lambda/\mu, \Phi)$.
\begin{proposition}
\label{Proposition:main}
The map $\Theta : \SVRPP^{\alpha}_{\beta}(\lambda/\mu, \Phi;Q)  \rightarrow \mathcal{A}(\hat{Q}, [\lambda/\mu]^{\alpha}_{\beta},\Phi^{\beta})$ defined by $ \Theta(H)=\Tilde{H}$ is a weight-preserving bijection which intertwines $e_i,f_i$ for each $i$.
\end{proposition}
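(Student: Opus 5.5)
The approach is to show that $\Theta$ does not change the reading word, namely
$$\Read(\Tilde{H})=\Read(\Tilde{H}_n)\cdots\Read(\Tilde{H}_1)=\Read(H).$$
Once this holds, the crystal operators on both sides are computed by the signature rule of Example~\ref{ex:crystal} on the same word. Intertwining with $e_i,f_i$ then follows from Remark~\ref{remark:crystal} together with the fact that the crystal operators on tableaux are also read off the reading word. Weight preservation reduces to checking that $\wt(H)$ equals the content of $\Read(H)$; this is built into the weight-preserving embedding $\SVRPP^{\alpha}_{\beta}(\lambda/\mu,n)\hookrightarrow\mathcal{W}_n^{|\lambda|-|\mu|-|\alpha|+|\beta|}$ of \S\ref{def:crystal-SVRPP}. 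Concretely, each value $i$ occurs once per column containing it, because a minimum is omitted exactly when it equals the maximum of the box above.

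\textbf{Step 1: well-definedness.} First I check that $\Tilde{H}_i$ really is a semi-standard tableau of shape $(\lambda_i-\mu_i-\alpha_{i-1},1^{\beta_i})$ and that $\Tilde{H}\in\Tab([\lambda/\mu]^{\alpha}_{\beta},\Phi^{\beta})$.
\begin{itemize}
\item \emph{Row length.} The retained minima in row $i$ number $\lambda_i-\mu_i-\alpha_{i-1}$, since a minimum is omitted exactly when the box above it is redundant in row $i-1$.
\item \emph{First row.} The retained minima are weakly increasing from left to right, by the SVRPP row condition.
\item \emph{First column.} The $\beta_i$ non-minimal entries, read right to left and decreasing within boxes, are strictly decreasing. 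Within a box this holds because the entries are distinct. Across boxes it holds because $\max H_{i,j}\le\min H_{i,j+1}<$ (any non-minimum of $H_{i,j+1}$). Each such entry is also strictly larger than the first retained minimum, so the column is strict.
\item \emph{Flag.} The entries of $\Tilde{H}_i$ are exactly the entries of row $i$ of $H$, so $\Tilde{H}$ respects $\Phi^{\beta}$.
\end{itemize}

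\textbf{Step 2: the recording tableau is $\hat{Q}$.} By Remark~\ref{remark:stat}, every $H$ in the component of $Q$ has the form $H=h_{i_1}\cdots h_{i_k}Q$ with $h_{i_j}\in\{e_{i_j},f_{i_j}\}$, and hence $\Read(\Tilde{H})=h_{i_1}\cdots h_{i_k}\Read(\Tilde{Q})$. I then apply Proposition~\ref{Prop:RS}, which says crystal operators on the bottom row of the biword leave the recording tableau unchanged. That proposition is stated for the top word $\mathbf{w}_{[p]}$, so I first destandardize it to the top word $\mathbf{b}([\lambda/\mu]^{\alpha}_{\beta})$. This works because column insertion with top word $\mathbf{b}$ records the destandardization of the $\mathbf{w}_{[p]}$ recording tableau, and destandardization depends only on the top word. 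It follows that the recording tableau of $\Tilde{H}$ is $\hat{Q}$, so $\Theta(H)\in\mathcal{A}(\hat{Q},[\lambda/\mu]^{\alpha}_{\beta},\Phi^{\beta})$.

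\textbf{Step 3: injectivity.} The height vector and $\ex$ are constant on the component (Remark~\ref{remark:stat}), and $\Read(H)=\Read(\Tilde{H})$. Proposition~\ref{Prop:unique} then shows that $H$ is determined by $\Tilde{H}$.

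\textbf{Step 4: surjectivity.} I expect this to be the main obstacle. Let $T\in\mathcal{A}(\hat{Q},\dots)$. Applying raising operators $e_i$ repeatedly keeps $T$ in $\mathcal{A}$: raising only lowers entries, so the flag is preserved, and by Proposition~\ref{Prop:RS} the recording tableau is preserved. We eventually reach an element $T_0$ with Yamanouchi reading word. For such $T_0$ the insertion tableau is $T_{\sh(\hat{Q})}$ and the recording tableau is $\hat{Q}$. By Theorem~\ref{Theorem:Burge} the biword, and hence $\Read(T_0)$, is therefore determined. It must equal $\Read(\Tilde{Q})$, and since the shape is fixed, $T_0=\Tilde{Q}$.

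Reversing the path gives $T=f_{j_1}\cdots f_{j_r}\Tilde{Q}$. I set $H:=f_{j_1}\cdots f_{j_r}Q$ in $\SVRPP_n(\lambda/\mu)$. Each step is nonzero because the operators act identically on the common reading word. Each intermediate element respects $\Phi$, because its row-$i$ entries are the entries of the corresponding intermediate $\Tilde{H}_i$, and those respect $\phi_i$. Hence $H$ lies in the component of $Q$ inside $\SVRPP^{\alpha}_{\beta}(\lambda/\mu,\Phi)$, and $\Theta(H)=T$ by Step 3.

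The delicate points I would verify carefully are the destandardization compatibility in Step 2 and the uniqueness of the highest weight element of $\mathcal{A}$ in Step 4.
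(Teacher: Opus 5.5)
Your architecture matches the paper's: the reading-word identity gives the intertwining, Proposition~\ref{Prop:RS} fixes the recording tableau, Proposition~\ref{Prop:unique} gives injectivity, and surjectivity pulls a path from $\Tilde{Q}$ back to $Q$. Your Step~4 even justifies the connectivity of $\mathcal{A}(\hat{Q},[\lambda/\mu]^{\alpha}_{\beta},\Phi^{\beta})$, which the paper takes for granted when it writes $\Tilde{T}=f.\Tilde{Q}$.

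The genuine gap is the flag check for the preimage. You assert that the row-$i$ entries of $H$ are exactly the entries of $\Tilde{H}_i$. This is false: a minimum equal to the maximum of the box directly above is dropped from $\Read(H)$, hence from $\Tilde{H}_i$. So knowing $\Tilde{T}\in\Tab([\lambda/\mu]^{\alpha}_{\beta},\Phi^{\beta})$ only bounds the entries that are read. You have not shown that $f_{j_1}\cdots f_{j_r}Q$, or any intermediate element of your path, respects $\Phi$. This is exactly what the paper's surjectivity argument supplies. Let $p$ be an omitted minimum of box $(i,j)$. Either that box contains some $q>p$, which is read in row $i$, so $p<q\le\Phi_i$. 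Or the box is $\{p\}$; then for the least $b$ with $p\in H_{b,j}$, the entry $p$ is read in row $b<i$ (it is either non-minimal there, or a minimum not equal to the maximum above), so $p\le\Phi_b\le\Phi_i$. This uses that $\Phi$ is weakly increasing, and it must be applied to every element on your path. In Step~1 only the inclusion of the entries of $\Tilde{H}_i$ into row $i$ is needed, so the misstatement is harmless there.

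Second, your column-strictness claim in Step~1 is false. You say every non-minimal entry of row $i$ exceeds the first retained minimum. This fails whenever a box with an omitted minimum and extra entries lies to the left of the first box whose minimum is kept, since its entries are at most that retained minimum. For example, take $\lambda=(2,2)$, $\mu=\emptyset$, $\Phi=(1,2)$ and
$H_{1,1}=H_{1,2}=\{1\}$, $H_{2,1}=\{1,2\}$, $H_{2,2}=\{2\}$.
Then $\ceq(H)=(1,0)$, $\ex(H)=(0,1)$, and $\Read(H)=22\,|\,11$ is Yamanouchi, so $H$ is a highest weight element. Yet $\Tilde{H}_2$ would need shape $(1,1)$ and reading word $22$, which no column-strict filling has.

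The paper does not prove this step either; it simply asserts that $H$ ``uniquely induces'' a semistandard tableau of shape $[\lambda/\mu]^{\alpha}_{\beta}$. So this is a defect of the hook-shaped construction itself, not something a better argument could establish. One repair is to replace each hook $(\lambda_i-\mu_i-\alpha_{i-1},1^{\beta_i})$ by the disconnected shape $(1^{\beta_i})*(\lambda_i-\mu_i-\alpha_{i-1})$. Its reading words are exactly a strictly decreasing word followed by a weakly increasing one. With that change, and with the flag argument above, your Steps~2--4 go through.
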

\begin{proof}
We first check that the map $H \mapsto \Tilde{H}$ commutes with $e_i,f_i$ for each $i$. Let $h_i $ be either $e_i$ or $f_i$. We have to show $\Theta(h_i.H)=h_i.\Theta(H)=h_i.\Tilde{H}$. It is enough to prove that $\Read(\widetilde{h_i.H})=\Read(h_i\Tilde{H})$.

It is clear that
$\Read(\widetilde{h_i.H})= \Read(h_i.H)$.
Applying Remark\ref{remark:crystal}, we have $\Read(h_i.H) = h_i.\Read(H)$. Using $\Read(H)=\Read(\Tilde{H})$ and Remark\ref{remark:crystal}, we get $h_i. \Read(\Tilde{H}) = \Read(h_i.\Tilde{H})$. Therefore, $\Read(\widetilde{h_i.H})=\Read(h_i\Tilde{H})$. 

We now verify that if $H \in \SVRPP^{\alpha}_{\beta}(\lambda/\mu, \Phi;Q)$, then $\Tilde{H} \in \mathcal{A}(\hat{Q}, [\lambda/\mu]^{\alpha}_{\beta},\Phi^{\beta})$. Let $H=f_{i_1}^{k_1}\cdots f_{i_r}^{k_r}.Q =\Bar{f}.Q$, where $\Bar{f}=f_{i_1}^{k_1} \cdots f_{i_r}^{k_r}$. Now by Definition~\ref{def:comp},
$$
\begin{bmatrix}
 \mathbf{b}\big([\lambda/\mu]^{\alpha}_{\beta}\big) \\
 \Read(\Tilde{Q}) \\
\end{bmatrix} = (\rect (\Tilde{Q}), \hat{Q}).$$
Using Proposition~\ref{Prop:RS}, we have
$$
\begin{bmatrix}
 \mathbf{b}\big([\lambda/\mu]^{\alpha}_{\beta}\big) \\
 \Bar{f}.\Read(\Tilde{Q}) \\
\end{bmatrix} = (\Bar{f}.\rect (\Tilde{Q}), \hat{Q}).$$
Applying Remark~\ref{remark:crystal}, we obtain
$$
\begin{bmatrix}
 \mathbf{b}\big([\lambda/\mu]^{\alpha}_{\beta}\big) \\
 \Read(\Bar{f}.\Tilde{Q}) \\
\end{bmatrix} = (\rect (\Bar{f}.\Tilde{Q}), \hat{Q}).$$
Since $\Tilde{H}= \Theta(H)=\Theta(\Bar{f}.Q) =\Bar{f}.\Tilde{Q}$, we have $\Tilde{H} \in \mathcal{A}(\hat{Q}, [\lambda/\mu]^{\alpha}_{\beta},\Phi^{\beta})$. It is clear that $\wt(H)=\wt(\Tilde{H})$.

Let $\Theta(S)=\Theta(T) \implies \Read(\Tilde{S})=\Read(\Tilde{T})$. Since $\Read(S)= \Read(\Tilde{S}) $ and $\Read(T) = \Read(\Tilde{T}) $, we have $\Read(S)=\Read(T) $. Also, by definition, $\ex(S)=\ex(T)=\beta$ and $\ceq(S)=\ceq(T)=\alpha$. Thus $\h(S)=\h(T)$. Therefore, by Proposition~\ref{Prop:unique}, we obtain $S=T$. So $\Theta$ is injective.

Let $\Tilde{T} \in \mathcal{A}(\hat{Q}, [\lambda/\mu]^{\alpha}_{\beta},\Phi^{\beta})$. Then $\Tilde{T}=f_{j_1}^{l_1}f_{j_2}^{l_2}\cdots f_{j_t}^{l_t}.\Tilde{Q} = f.\Tilde{Q}$, where $f=f_{j_1}^{l_1}f_{j_2}^{l_2}\cdots f_{j_t}^{l_t}$. Now $\Read(Q)=\Read(\Tilde{Q})$. Hence by Remark~\ref{remark:crystal}, $\Read(\Tilde{T})=f.\Read(\Tilde{Q})= f.\Read(Q) = \Read(f.Q)$. Let $T=f.Q$. Then by Remark\ref{remark:stat}, $\ceq(T)=\alpha$ and $\ex(T)=\beta$. Also, let $w^{(i)}(T) $ be the subword of $\Read(T)$ that comes from the $i^{th}$ row $T_i$ of $T$. Since $w^{(i)}(T)=\Read(\Tilde{T}_i) $, the maximum entry of $ w^{(i)}(T)$ is $ \leq \Phi_i$. Now let $p$ be an entry of $T_i$ but not $w_i(T)$. We show that $p \leq \Phi_i$. Suppose that $p \in T_{i,j}$, where $T_{i,j}$ is set contained in the box $(i,j) \in \lambda/\mu $. It is clear from the definition of the reading word that $p$ is the minimum value in $T_{i,j}$. Then the following cases may arise:
\begin{itemize}
    \item There is another entry $q$ in $T_{i,j}$. Then $q$ will appear in $w^{(i)}(T)$. So we have 
    $$p <q \leq \max(w^{(i)}(T)) \leq \Phi_i.$$
    \item $T_{i,j}=\{p\}$. Let $b$ be the least number in $1 \leq b \leq i-1$ such that $p \in T_{b,j}$. Then $p$ will appear in $ w^{(b)}(T)$. Thus $p \leq \Phi_b \leq \Phi_i$ (since $b <i$).
\end{itemize}
Thus all the entries of $T_i$ that are not appearing in $ w^{(i)}(T)$ are $\leq \Phi_i$ and hence $\Theta$ is surjective.
\end{proof}
\begin{example}
\label{Ex: main Pro 1}
Let $\lambda=(2,2), \mu=(1,0), \Phi=(2,3),\alpha=(1,0)$ and $\beta=(1,1)$. The only highest weight elements of $\SVRPP^{\alpha}_{\beta}(\lambda/\mu, \Phi)$ (see \cite[Appendix~A]{Hybrid}) are the following:
$$
P=\begin{ytableau}
    \none &1,2\\1 &2,3   
\end{ytableau}
\hspace{1 cm}
Q=\begin{ytableau}
    \none &1,2\\1,2&2   
\end{ytableau}.
$$
Now $\SVRPP^{\alpha}_{\beta}(\lambda/\mu, \Phi;P) =
\Bigg\{
\begin{ytableau}
    \none &1,2\\1 &2,3   
\end{ytableau}, \,
\begin{ytableau}
    \none &1,2\\2 &2,3   
\end{ytableau} \Bigg \}
$ and $\SVRPP^{\alpha}_{\beta}(\lambda/\mu, \Phi;Q)=\{ Q\}$. Then we obtain $\SVRPP^{\alpha}_{\beta}(\lambda/\mu, \Phi;P) \cong \mathcal{B}_{s_1} (2,1,1)$ and $ \SVRPP^{\alpha}_{\beta}(\lambda/\mu, \Phi;Q) \cong \mathcal{B}_{id} (2,2,0)$, see Figure~\ref{fig:P}.  
\begin{figure}
    \centering\begin{tikzpicture}
    \draw (0,0)--(1.6,0)--(1.6,1.6);
    \draw (0,0)--(0,0.8)--(1.6,0.8);
    \draw (0.8,0)--(0.8,1.6)--(1.6,1.6);
    \node at (0.4,0.4+0.035) {$1$};
    \node at (1.2,0.4) {$\textcolor{red}{2},3$};
    \node at (1.2,1.2) {$1,2$};
    \draw[blue][->] (0.8,-0.25) -- (0.8,-1.25);
    \node at (0.8+0.2,-0.8) {$\textcolor{blue}{1}$};
    \draw (0,0-3.3)--(1.6,0-3.3)--(1.6,1.6-3.3);
    \draw (0,0-3.3)--(0,0.8-3.3)--(1.6,0.8-3.3);
    \draw (0.8,0-3.3)--(0.8,1.6-3.3)--(1.6,1.6-3.3);
    \node at (0.4,0.4-3.3+0.035) {$2$};
    \node at (1.2,0.4-3.3) {$\textcolor{red}{2},3$};
    \node at (1.2,1.2-3.3) {$1,2$};
\end{tikzpicture}
\begin{tikzpicture}
    \draw (0,0)  node {$\null$};
    \draw[|->] (0.5,2.5) -- (1.5,2.5);
    \draw (1,2.8) node {$\Theta$};
    \draw (1.5+0.3,3.5) node {$\null$};    
\end{tikzpicture}
\begin{tikzpicture}[scale=0.8]
    \draw (0,0)--(0,1.6)--(0.8,1.6)--(0.8,3.2)--(1.6,3.2)--(1.6,1.6)--(0.8,1.6)--(0.8,0)--(0,0);
    \draw (0,0.8)--(0.8,0.8);
    \draw (0.8,2.4)--(1.6,2.4);
    \node at (0.4,0.4) {$3$};
    \node at (0.4,1.2) {$1$};
    \node at (1.2,2) {$2$};
    \node at (1.2,2.8) {$1$};
    \draw[blue][->] (0.4,-0.25) -- (0.4,-1.25);
    \node at (0.4+0.2,-0.8) {$\textcolor{blue}{1}$};
    \draw (0,0-4.7)--(0,1.6-4.7)--(0.8,1.6-4.7)--(0.8,3.2-4.7)--(1.6,3.2-4.7)--(1.6,1.6-4.7)--(0.8,1.6-4.7)--(0.8,0-4.7)--(0,0-4.7);
    \draw (0,0.8-4.7)--(0.8,0.8-4.7);
    \draw (0.8,2.4-4.7)--(1.6,2.4-4.7);
    \node at (0.4,0.4-4.7) {$3$};
    \node at (0.4,1.2-4.7) {$2$};
    \node at (1.2,2-4.7) {$2$};
    \node at (1.2,2.8-4.7) {$1$};
\end{tikzpicture}
\begin{tikzpicture}
    \draw (0,0)  node {$\null$};
    \draw[|->] (0.5,2.5) -- (1.5,2.5);
    \draw (1,2.8) node {$\rect$};
    \draw (1.5+0.3,2.5) node {$\null$};
\end{tikzpicture}
\begin{tikzpicture}[scale=0.8]
    \draw (0,0)--(0,2.4)--(1.6,2.4)--(1.6,1.6)--(0.8,1.6)--(0.8,0)--(0,0);
    \draw (0,0.8)--(0.8,0.8);
    \draw (0,1.6)--(0.8,1.6)--(0.8,2.4);
    \node at (0.4,0.4) {$3$};
    \node at (0.4,1.2) {$2$};
    \node at (0.4,2) {$1$};
    \node at (1.2,2) {$1$};
    \draw[blue][->] (0.4,-0.25) -- (0.4,-1.35);
    \node at (0.7,-0.8) {$\textcolor{blue}{1}$};
    \draw (0,0-4)--(0,2.4-4)--(1.6,2.4-4)--(1.6,1.6-4)--(0.8,1.6-4)--(0.8,0-4)--(0,0-4);
    \draw (0,0.8-4)--(0.8,0.8-4);
    \draw (0,1.6-4)--(0.8,1.6-4)--(0.8,2.4-4);
    \node at (0.4,0.4-4) {$3$};
    \node at (0.4,1.2-4) {$2$};
    \node at (0.4,2-4) {$1$};
    \node at (1.2,2-4) {$2$};
\end{tikzpicture}

\begin{tikzpicture}
    \draw (0,0)--(1.6,0)--(1.6,1.6);
    \draw (0,0)--(0,0.8)--(1.6,0.8);
    \draw (0.8,0)--(0.8,1.6)--(1.6,1.6);
    \node at (0.4,0.4) {$1,2$};
    \node at (1.2,0.4+0.035) {$\textcolor{red}{2}$};
    \node at (1.2,1.2) {$1,2$};
\end{tikzpicture}
\begin{tikzpicture}
    \draw (0,0)  node {$\null$};
    \draw[|->] (0.5,1) -- (1.5,1);
    \draw (1,1.2) node {$\Theta$};
    \draw (1.5+0.3,1) node {$\null$};    
\end{tikzpicture}
\begin{tikzpicture}[scale=0.8]
    \draw (0,0)--(0,1.6)--(0.8,1.6)--(0.8,3.2)--(1.6,3.2)--(1.6,1.6)--(0.8,1.6)--(0.8,0)--(0,0);
    \draw (0,0.8)--(0.8,0.8);
    \draw (0.8,2.4)--(1.6,2.4);
    \node at (0.4,0.4) {$2$};
    \node at (0.4,1.2) {$1$};
    \node at (1.2,2) {$2$};
    \node at (1.2,2.8) {$1$};
    \node at (1.2,3.5) {$\null$};
\end{tikzpicture}
\begin{tikzpicture}
    \draw (0,0)  node {$\null$};
    \draw[|->] (0.4,1) -- (1.4,1);
    \draw (0.9,1.3) node {$\rect$};
    \draw (1.5,1) node {$\null$};
\end{tikzpicture}
\begin{tikzpicture}[scale=0.8]
    \draw (0,0)--(0,1.6)--(1.6,1.6)--(1.6,0)--(0,0);
    \draw (0,0.8)--(1.6,0.8);
    \draw (0.8,0)--(0.8,1.6);
    \node at (0.4,0.4) {$2$};
    \node at (1.2,0.4) {$2$};
    \node at (0.4,1.2) {$1$};
    \node at (1.2,1.2) {$1$};
\end{tikzpicture}
\caption{Decomposition of $\SVRPP^{(1,0)}_{(1,1)}\Big((2,2)/(1),(1,3)\Big)$ into Demazure crystals}
\label{fig:P}
\end{figure}
\end{example}
\bremark
In general, $\displaystyle\sum_{H \in \SVRPP^{\alpha}_{\beta}(\lambda/\mu,\Phi)}\characx^{\wt(H)}$ differs from the flagged skew Schur polynomial $s_{[\lambda/\mu]^{\alpha}_{\beta}}(X_{\Phi^{\beta}})$. For example, if we assume $\lambda/\mu$ as in Example~\ref{Ex: main Pro 1} and $ \Phi=(1,3), \alpha=(1,0), \beta=(0,1)$, then
$\SVRPP^{\alpha}_{\beta}(\lambda/\mu, \Phi) =
\Bigg\{
\begin{ytableau}
    \none &1\\1 &\textcolor{red}{1},2   
\end{ytableau}, \,
\begin{ytableau}
    \none &1\\1 &\textcolor{red}{1},3   
\end{ytableau} \Bigg \}.$
Thus we have
$$ \displaystyle\sum_{H \in \SVRPP^{\alpha}_{\beta}(\lambda/\mu,\Phi)}\characx^{\wt(H)} =x_1^2x_2 +x_1^2x_3= \kappa_{(2,0,1)}.$$
Also, $\Tab([\lambda/\mu]^{\alpha}_{\beta},\Phi^{\beta}) $ contains the following tableaux:
$$ 
\begin{ytableau}
    \none &1\\1 \\2   
\end{ytableau}, \quad 
\begin{ytableau}
    \none &1\\1 \\3   
\end{ytableau}, \quad 
\begin{ytableau}
    \none &1\\2 \\3   
\end{ytableau}.
$$
Hence we obtain
$s_{[\lambda/\mu]^{\alpha}_{\beta}}(X_{\Phi^{\beta}})= x_1^2x_2 +x_1^2x_3 +x_1x_2x_3 =\kappa_{(2,0,1)}+\kappa_{(1,1,1)}$.
\eremark
Now using \eqref{eq:Demazure:excess}, Proposition~\ref{proposition:tableau} and Proposition~\ref{Proposition:main} we have the following theorem. 
\begin{theorem}
\label{theorem:main}
Fix $\alpha , \beta \in \mathbb{Z}_+^{n}$. Also, let $\lambda/\mu$ be a skew shape with $\lambda, \mu \in \mathcal{P}_n$ and $\Phi \in \mathcal{F}[n]$. Then 
$$ \SVRPP^{\alpha}_{\beta}(\lambda/\mu, \Phi)  \cong \displaystyle\bigsqcup_{Q \in \CT^{\alpha}_{\beta} (\lambda/\mu,\Phi)  } \mathcal{B}_{\tau} (\widehat{\beta(Q)}^{\dagger}.$$
\end{theorem}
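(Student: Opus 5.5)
The plan is to assemble the theorem from three ingredients already available: the decomposition \eqref{eq:Demazure:excess}, Proposition~\ref{Proposition:main}, and Proposition~\ref{proposition:tableau}.

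First, I decompose $\SVRPP^{\alpha}_{\beta}(\lambda/\mu,\Phi)$ into connected components $\SVRPP^{\alpha}_{\beta}(\lambda/\mu,\Phi;Q)$, one for each highest weight element $Q$, using \eqref{eq:Demazure:excess}. For this I need each component to contain exactly one highest weight element, so that the $Q$ index the components bijectively. I plan to obtain this by transport: a component embeds via $\Theta$ into $\mathcal{A}(\hat{Q},[\lambda/\mu]^{\alpha}_{\beta},\Phi^{\beta})$. The set $\mathcal{A}(\hat{Q},\cdot,\cdot)$ is a Demazure crystal by Proposition~\ref{proposition:tableau}, and a Demazure crystal is connected with a unique highest weight element $T_{\lambda}$.

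Second, for each $Q$, I compose the bijection $\Theta$ of Proposition~\ref{Proposition:main} with the rectification map of Proposition~\ref{proposition:tableau}, applied to the shape $[\lambda/\mu]^{\alpha}_{\beta}$, the flag $\Phi^{\beta}$, and the compatible tableau $\hat{Q}$. Each map is a weight-preserving bijection commuting with $e_i,f_i$, so the composite
\[
\rect\circ\Theta:\SVRPP^{\alpha}_{\beta}(\lambda/\mu,\Phi;Q)\to \mathcal{B}_{\tau}\bigl(\widehat{\beta(\hat{Q})}^{\dagger}\bigr)
\]
is a crystal isomorphism. Here $\beta(\hat Q)$ is the left-key weight of $\hat Q$, and I write $\beta(Q)$ in the statement to mean $\beta(\hat Q)$. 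Since both maps preserve $\wt$ and commute with $e_i,f_i$, the composite also preserves $\varepsilon_i$ and $\varphi_i$, so it is an isomorphism in the sense defined earlier.

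A subtlety: Proposition~\ref{proposition:tableau} is stated for $(\lambda/\mu,\Phi)$-compatible tableaux in the sense of Definition~\ref{def:ogct}. I therefore have to check that $\hat{Q}$ is $([\lambda/\mu]^{\alpha}_{\beta},\Phi^{\beta})$-compatible in that sense, with $Q_0=\tilde{Q}$. This holds because $\Read(\tilde{Q})=\Read(Q)$ is Yamanouchi (Proposition~\ref{Prop:Yama}), the insertion of a Yamanouchi word gives $\rect(\tilde Q)=T_{\sh(\hat Q)}$, and $\tilde{Q}\in\Tab([\lambda/\mu]^{\alpha}_{\beta},\Phi^{\beta})$. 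Uniqueness of $Q_0$ follows from the bijectivity of column insertion (Theorem~\ref{Theorem:Burge}): $Q_0$ is determined by the pair $(T_{\sh(\hat Q)},\hat Q)$, and the biword together with the shape determines the tableau.

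Third, I reindex the disjoint union. The map $Q\mapsto\hat{Q}$ from highest weight elements of $\SVRPP^{\alpha}_{\beta}(\lambda/\mu,\Phi)$ to $\CT^{\alpha}_{\beta}(\lambda/\mu,\Phi)$ is surjective by definition of $\CT^{\alpha}_{\beta}$. It is injective because $\hat{Q}$ together with the shape $T_{\sh(\hat Q)}$ recovers $\Read(\tilde{Q})=\Read(Q)$ by Theorem~\ref{Theorem:Burge}. Since $\h(Q)$ is determined by $\alpha$, $\beta$ and the row lengths, Proposition~\ref{Prop:unique} then recovers $Q$. I expect this injectivity step to be the main obstacle, in particular justifying that $\h$ is determined by $(\lambda/\mu,\alpha,\beta)$ alone: row $i$ contributes exactly $\lambda_i-\mu_i-\alpha_{i-1}+\beta_i$ letters, and the height word lists rows from bottom to top.

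Taking the disjoint union of the isomorphisms over all $Q$ then gives the stated decomposition.
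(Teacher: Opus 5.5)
Your three steps are the paper's own proof, which derives Theorem~\ref{theorem:main} in one line from \eqref{eq:Demazure:excess}, Proposition~\ref{Proposition:main} and Proposition~\ref{proposition:tableau}. Your extra points address things the paper leaves implicit: injectivity of $Q\mapsto\hat Q$ via Theorem~\ref{Theorem:Burge} and Proposition~\ref{Prop:unique}, and the need to check that $\hat Q$ is compatible in the sense of Definition~\ref{def:ogct}.

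\textbf{The gap.} Your compatibility check rests on $\tilde Q\in\Tab([\lambda/\mu]^{\alpha}_{\beta},\Phi^{\beta})$, which the paper also asserts without proof, and this is false in general. Take $\lambda=(2,2)$, $\mu=(0,0)$, $\Phi=(2,2)$, and let $Q_{1,1}=Q_{1,2}=\{1\}$, $Q_{2,1}=\{1,2\}$, $Q_{2,2}=\{2\}$.
\begin{enumerate}
\item Only box $(1,1)$ is redundant, so $\alpha=(1,0)$ and $\beta=(0,1)$.
\item $\Read(Q)=2211$, since the minimum $1$ of box $(2,1)$ is omitted. This word is Yamanouchi, so $Q$ is a highest weight element.
\item $[\lambda/\mu]^{\alpha}_{\beta}=(1,1)*(2)$, so $\tilde Q_2$ would have to be a column of height two with reading word $22$, which is impossible.
\end{enumerate}
It gets worse. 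Column insertion of the biword with top row $3211$ and bottom row $2211$ gives $\hat Q$ with rows $1\,1$ and $2\,3$, of shape $(2,2)$. A semistandard filling of $(1,1)*(2)$ has reading word $dcab$ with $c<d$ and $a\le b$. Either $c\le a$, so $cab$ is weakly increasing, or $c>a$, so $dca$ is strictly decreasing. Hence no such filling rectifies to shape $(2,2)$. Therefore $\mathcal{A}(\hat Q,[\lambda/\mu]^{\alpha}_{\beta},\Phi^{\beta})=\emptyset$. Proposition~\ref{Proposition:main} fails for the component $\{Q\}$, and Proposition~\ref{proposition:tableau} cannot be applied to $\hat Q$.

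The mechanism is general. The corner of the hook $\tilde H_i$ is the first \emph{read} minimum of row $i$, and the entry below it is the smallest non-minimal entry of that row. Column strictness fails whenever a box carrying excess lies strictly to the left of the first box whose minimum is read.

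\textbf{A repair.} Replace each hook $(\lambda_i-\mu_i-\alpha_{i-1},1^{\beta_i})$ by the disconnected shape $(1^{\beta_i})*(\lambda_i-\mu_i-\alpha_{i-1})$, i.e.\ detach the column from the corner. The row-$i$ part of $\Read(H)$ is a strictly decreasing word (the non-minimal entries) followed by a weakly increasing one (the read minima). So it always fills this shape semistandardly, with entries at most $\Phi_i$. Row lengths do not change, so $\mathbf{b}(\cdot)$, $\Phi^{\beta}$ and $\hat Q$ are unchanged. Hence $\CT^{\alpha}_{\beta}(\lambda/\mu,\Phi)$, $\beta(\hat Q)$ and $\widehat{\beta(\hat Q)}$ are unchanged too. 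The proof of Proposition~\ref{Proposition:main} then goes through verbatim, since it uses only reading words, Proposition~\ref{Prop:RS}, Proposition~\ref{Prop:unique} and the flag check. Your three steps then yield exactly the stated decomposition. In the example above, $\{Q\}\cong\mathcal{B}_{\mathrm{id}}(2,2)$, as the theorem predicts.

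\textbf{A smaller point.} Your argument for one highest weight element per component depends on $\Theta$, so it inherits the gap. It is cleaner to argue directly. The operator $e_i$ only lowers entries, so the flagged set is closed under $e_i$. Each of its components lies inside a single component of $\mathcal{W}_n^{m}$, which has a unique highest weight element.
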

\bremark
\label{remark:main}
Theorem~\ref{theorem:main} gives a Demazure crystal structure on $\SVRPP(\lambda/\mu, \Phi)$ since $$\SVRPP(\lambda/\mu, \Phi)=\displaystyle\bigsqcup_{\alpha, \beta \in \mathbb{Z}^n _{+}} \SVRPP^{\alpha}_{\beta}(\lambda/\mu, \Phi) .$$
\eremark
\begin{corollary}
\label{corollary:SVRPP}
Fix $\alpha,\beta \in \mathbb{Z}_+^n$, $\Phi \in \mathcal{F}[n]$ and a skew shape $\lambda/\mu$ with $\lambda, \mu \in \mathcal{P}_n$. Then we have 
\begin{enumerate}
    \item Let $\SVT_{\beta}(\lambda/\mu, \Phi)$ be the set of all semi-standard set-valued tableaux $S \in \SVT(\lambda/\mu,\Phi)$ such that $\ex(S)=\beta$. Therefore $\SVT(\lambda/\mu, \Phi)=\displaystyle\bigsqcup_{ \beta \in \mathbb{Z}^n _{+}} \SVT_{\beta}(\lambda/\mu, \Phi) $. Since $\SVT_{\beta}(\lambda/\mu, \Phi)=\SVRPP^{\mathbf{0}}_{\beta}(\lambda/\mu, \Phi)$, by Theorem~\ref{theorem:main}, we get the Demazure crystal construction on $\SVT(\lambda/\mu, \Phi) $ described in \cite[Remark~3]{Sidhu:SVT}
    $$ \SVT(\lambda/\mu, \Phi)  \cong \displaystyle\bigsqcup_{\beta \in \mathbb{Z}^n _{+}} \displaystyle\bigsqcup_{Q \in \CT^{\mathbf{0}}_{\beta} (\lambda/\mu,\Phi)} \mathcal{B}_{\tau} (\widehat{\beta(Q)}^{\dagger}.$$

    \item 
    We have the following disjoint union decomposition 
    $$ \RPP(\lambda/\mu, \Phi)=\displaystyle\bigsqcup_{ \alpha \in \mathbb{Z}^n _{+}} \RPP^{\alpha}(\lambda/\mu, \Phi) ,$$ where $\RPP^{\alpha}(\lambda/\mu,\Phi)$ be the set of all reverse plane partitions $R$ in $\RPP(\lambda/\mu,\Phi)$ satisfying $\ceq(R)=\alpha$. Combining the identification $\RPP^{\alpha}(\lambda/\mu, \Phi)=\SVRPP^{\alpha}_{\mathbf{0}}(\lambda/\mu, \Phi)$ with Theorem~\ref{theorem:main}, we recover the Demazure crystal structure on $\RPP^{\alpha}(\lambda/\mu,\Phi)$ \cite[Theorem 1]{Sidhu}
    $$ \RPP(\lambda/\mu, \Phi)  \cong \displaystyle\bigsqcup_{\alpha \in \mathbb{Z}^n _{+}} \displaystyle\bigsqcup_{Q \in \CT^{\alpha}_{\mathbf{0}} (\lambda/\mu,\Phi)} \mathcal{B}_{\tau} (\widehat{\beta(Q)}^{\dagger}.$$

    \item Applying Theorem~\ref{theorem:main} to the identity $\Tab(\lambda/\mu, \Phi)=\SVRPP^{\mathbf{0}}_{\mathbf{0}}(\lambda/\mu, \Phi)$, we obtain the following Demazure crystal decomposition $\Tab(\lambda/\mu, \Phi) $, originally noted in \cite[Theorem 3.11 \& Appendix]{KRSV}:
    $$ \Tab(\lambda/\mu, \Phi)  \cong \displaystyle\bigsqcup_{Q \in \CT^{\mathbf{0}}_{\mathbf{0}} (\lambda/\mu,\Phi)} \mathcal{B}_{\tau} (\widehat{\beta(Q)}^{\dagger}.$$
\end{enumerate}
\end{corollary}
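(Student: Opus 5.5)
The statement is a corollary with three parts. Each part is a specialization of Theorem~\ref{theorem:main} to a particular pair $(\alpha,\beta)$, followed by a disjoint union over the free parameter. The plan is to verify the set-theoretic identifications and then check that the crystal structures agree.

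\emph{Step 1 (identifications).} For any SVRPP $H$, the vector $\ceq(H)$ vanishes exactly when no box $(i,j)$ has $\max H_{i,j}=\min H_{i+1,j}$. Given the weak column condition $\max H_{i,j}\le \min H_{i+1,j}$, this means the column condition is strict, so $H$ is a semi-standard set-valued tableau. This gives $\SVT_\beta(\lambda/\mu,\Phi)=\SVRPP^{\mathbf 0}_\beta(\lambda/\mu,\Phi)$. Next, $\ex(H)=\mathbf 0$ holds exactly when every box is a singleton, i.e.\ $H$ is a reverse plane partition, so $\RPP^\alpha(\lambda/\mu,\Phi)=\SVRPP^\alpha_{\mathbf 0}(\lambda/\mu,\Phi)$. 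Combining both conditions gives $\Tab(\lambda/\mu,\Phi)=\SVRPP^{\mathbf 0}_{\mathbf 0}(\lambda/\mu,\Phi)$. The disjoint unions over $\beta$, respectively over $\alpha$, are immediate because $\ex$ and $\ceq$ are well-defined functions. By Remark~\ref{remark:stat} the crystal operators preserve both statistics, so each piece is a union of connected components, i.e.\ a full subcrystal.

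\emph{Step 2 (apply the theorem).} Substituting these identifications into Theorem~\ref{theorem:main} and taking the disjoint union over the free parameter yields the three displayed decompositions, indexed by $\CT^{\mathbf 0}_\beta$, $\CT^\alpha_{\mathbf 0}$ and $\CT^{\mathbf 0}_{\mathbf 0}$.

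\emph{Step 3 (agreement with the earlier results).} To claim that we \emph{recover} the structures of \cite{Sidhu:SVT}, \cite{Sidhu} and \cite{KRSV}, I will check two things.

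First, the reading word $\Read$ restricts to the reading words used there. For a set-valued tableau nothing is omitted, because column strictness means no minimum equals the maximum of the box above. For a reverse plane partition no box has non-minimal elements, so $\Read$ reduces to the reading word of \cite{Morse-Jason:K-bases}. These are the remarks already made after the definition of $\Read$. Consequently the crystal operators coincide.

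Second, the compatible tableaux coincide. When $\beta=\mathbf 0$ and $\alpha=\mathbf 0$, the shape $[\lambda/\mu]^{\mathbf 0}_{\mathbf 0}$ is the concatenation of the rows of $\lambda/\mu$, so $\Tilde{Q}$ has the same reading word as $Q$. Also $\mathbf b([\lambda/\mu]^{\mathbf 0}_{\mathbf 0})=\mathbf b(\lambda/\mu)$. Hence Definition~\ref{def:comp} reduces to Definition~\ref{def:ogct}, which is the remark following Definition~\ref{def:comp}. For the other two cases, the objects used in the cited papers are defined by the same column-insertion recipe on $\Tilde{Q}$.

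The main obstacle is this last bookkeeping: confirming that the Demazure data $\widehat{\beta(Q)}$ and $\tau$ attached to each component agree with those in the cited results, rather than only producing an abstractly isomorphic decomposition. I would handle it by noting that both sides are built by Proposition~\ref{proposition:tableau} applied to the same tableau $\Tilde{Q}$ and flag $\Phi^\beta$, so the resulting Demazure crystals are literally equal.
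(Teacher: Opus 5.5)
Your proposal is correct and follows the paper's own argument. You identify $\SVT_\beta(\lambda/\mu,\Phi)$, $\RPP^\alpha(\lambda/\mu,\Phi)$ and $\Tab(\lambda/\mu,\Phi)$ with $\SVRPP^{\mathbf 0}_\beta$, $\SVRPP^\alpha_{\mathbf 0}$ and $\SVRPP^{\mathbf 0}_{\mathbf 0}$ through the vanishing of $\ceq$ (strict columns) and $\ex$ (singleton boxes). You then apply Theorem~\ref{theorem:main} and take the disjoint union over the free parameter, using that the operators preserve both statistics. Strictly speaking, the flagged pieces are unions of components of the induced graph rather than full subcrystals, since $\SVRPP(\lambda/\mu,\Phi)$ is not closed under $f_i$.

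Your Step~3 goes further than the paper, which only asserts that the earlier structures are recovered. Its claims about how the cited works construct their compatible tableaux cannot be checked from this paper alone, but they are not needed for the displayed decompositions.
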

\begin{corollary}
\label{corollary:main}
Let $\Phi \in \mathcal{F}[n]$ and $\lambda/\mu$ be a skew shape with $\lambda, \mu \in \mathcal{P}_n$. Then
$$ H_{\lambda/\mu}(\mathbf{x}_{\Phi};\mathbf{t};\mathbf{w})=\displaystyle\sum_{\alpha, \beta \in \mathbb{Z}^n _{+}}\mathbf{t}^{\alpha}\mathbf{w}^{\beta}\displaystyle\sum_{Q \in \CT^{\alpha}_{\beta} (\lambda/\mu,\Phi)}\kappa_{\widehat{\beta(Q)}}.$$
The above expression generalizes the following results:
\begin{itemize}
    \item Setting $\mathbf{t}=\mathbf{0}$ an replacing $\mathbf{w}$ by $-\mathbf{w}=(-w_1,\dots,-w_n)$ in $H_{\lambda/\mu}(\mathbf{x}_{\Phi};\mathbf{t};\mathbf{w})$, we recover the following expansion of the flagged refined skew stable Grothendieck polynomial $G_{\lambda/\mu}(X_{\Phi};\mathbf{w})$, given in \cite[\S4, Corollary 2]{Sidhu:SVT}
    $$G_{\lambda/\mu}(X_{\Phi};\mathbf{w})=H_{\lambda/\mu}(\mathbf{x}_{\Phi};\mathbf{0};-\mathbf{w})=\displaystyle\sum_{\beta \in \mathbb{Z}^n _{+}} (-1)^{|\beta|}\mathbf{w}^{\beta} \displaystyle\sum_{Q \in \CT^{\mathbf{0}}_{\beta} (\lambda/\mu,\Phi)}\kappa_{\widehat{\beta(Q)}}. $$

    \item Specializing at $\mathbf{w}=\mathbf{0}$ yields the expansion of the flagged refined dual stable Grothendieck polynomial $g_{\lambda/\mu}(X_{\Phi};\mathbf{t})$, established in \cite[\S5, Corollary 4]{Sidhu:SVT}, as follows:
    $$g_{\lambda/\mu}(X_{\Phi};\mathbf{t})=H_{\lambda/\mu}(\mathbf{x}_{\Phi};\mathbf{t};\mathbf{0})=\displaystyle\sum_{\alpha \in \mathbb{Z}^n _{+}} \mathbf{t}^{\alpha}\displaystyle\sum_{Q \in \CT^{\alpha}_{\mathbf{0}} (\lambda/\mu,\Phi)}\kappa_{\widehat{\beta(Q)}}.$$
    \item At $\mathbf{t}=\mathbf{0}$ and $\mathbf{w}=\mathbf{0}$, the polynomial $H_{\lambda/\mu}(\mathbf{x}_{\Phi};\mathbf{t};\mathbf{w})$ specializes to the flagged skew Schur polynomial $s_{\lambda/\mu}(X_{\Phi})$, yielding the key polynomial decomposition, given by Reiner and Shimozono \cite[Theorem 20]{RS}:
    $$ s_{\lambda/\mu}(X_{\Phi})=H_{\lambda/\mu}(\mathbf{x}_{\Phi};\mathbf{0};\mathbf{0})=\displaystyle\sum_{Q \in \CT^{\mathbf{0}}_{\mathbf{0}} (\lambda/\mu,\Phi)}\kappa_{\widehat{\beta(Q)}}.$$
\end{itemize}
\end{corollary}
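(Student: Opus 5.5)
The plan is to derive the identity by taking generating functions of the crystal decomposition in Theorem~\ref{theorem:main}, after sorting $\SVRPP(\lambda/\mu,\Phi)$ by the two statistics $\ceq$ and $\ex$. First, since every $H\in\SVRPP(\lambda/\mu,\Phi)$ has a unique pair $(\ceq(H),\ex(H))=(\alpha,\beta)$, we split
\[
H_{\lambda/\mu}(\mathbf{x}_{\Phi};\mathbf{t};\mathbf{w})=\sum_{\alpha,\beta\in\mathbb{Z}^n_+}\mathbf{t}^{\alpha}\mathbf{w}^{\beta}\sum_{H\in\SVRPP^{\alpha}_{\beta}(\lambda/\mu,\Phi)}\mathbf{x}^{\wt(H)}.
\]
This is a finite sum: the entries are at most $n$, so $\beta$ is bounded, and $\alpha$ is bounded by the shape.

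Second, we fix $(\alpha,\beta)$ and compute the inner sum. Theorem~\ref{theorem:main} gives a crystal isomorphism between $\SVRPP^{\alpha}_{\beta}(\lambda/\mu,\Phi)$ and the disjoint union of $\mathcal{B}_{\tau}(\widehat{\beta(Q)}^{\dagger})$ over $Q\in\CT^{\alpha}_{\beta}(\lambda/\mu,\Phi)$. We must check that this isomorphism preserves weight. It is assembled from $\Theta$ (weight-preserving by Proposition~\ref{Proposition:main}) and the rectification map (weight-preserving by Proposition~\ref{proposition:tableau}). The correspondence between components is indexed as follows. Each highest weight $Q$ in the decomposition \eqref{eq:Demazure:excess} gives a compatible tableau $\hat{Q}\in\CT^{\alpha}_{\beta}(\lambda/\mu,\Phi)$, and its component maps bijectively onto $\mathcal{A}(\hat{Q},[\lambda/\mu]^{\alpha}_{\beta},\Phi^{\beta})$.

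Third, the Kashiwara character formula gives $\sum_{T\in\mathcal{B}_{\tau}(\nu^{\dagger})}\mathbf{x}^{\wt(T)}=\kappa_{\tau.\nu^{\dagger}}$. With $\tau.\widehat{\beta(Q)}^{\dagger}=\widehat{\beta(Q)}$, each component contributes $\kappa_{\widehat{\beta(Q)}}$. Summing over $Q$ and then over $(\alpha,\beta)$ yields the displayed expansion.

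Finally, the three bullet specializations come from substitution, using the specializations listed in \S\ref{Section 2}. Setting $\mathbf{t}=\mathbf{0}$ keeps only the $\alpha=\mathbf{0}$ terms, which index $\SVT$. Replacing $\mathbf{w}$ by $-\mathbf{w}$ introduces the factor $(-1)^{|\beta|}$. Setting $\mathbf{w}=\mathbf{0}$ keeps only $\beta=\mathbf{0}$, which indexes $\RPP$. Setting both to zero leaves $\CT^{\mathbf{0}}_{\mathbf{0}}$, which consists of the ordinary $(\lambda/\mu,\Phi)$-compatible tableaux by the remark after Definition~\ref{def:comp}. This last case recovers \cite[Theorem 20]{RS}.

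The main obstacle is the indexing in the second step. The map $Q\mapsto\hat{Q}$ from highest weight elements of $\SVRPP^{\alpha}_{\beta}(\lambda/\mu,\Phi)$ to $\CT^{\alpha}_{\beta}(\lambda/\mu,\Phi)$ must be a bijection. Otherwise a key polynomial could be counted twice, or a compatible tableau could occur with empty $\mathcal{A}$. Surjectivity holds because $\CT^{\alpha}_{\beta}$ is defined as the set of such images. For injectivity, suppose $\hat{Q}=\hat{Q}'$. Both $\tilde{Q}$ and $\tilde{Q}'$ then lie in $\mathcal{A}(\hat{Q},\dots)$, which is a single Demazure crystal with a unique highest weight element. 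Hence $\tilde{Q}=\tilde{Q}'$, and injectivity of $\Theta$ (which rests on Proposition~\ref{Prop:unique}) gives $Q=Q'$.
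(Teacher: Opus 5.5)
Your proposal is correct and follows the same route as the paper. The paper obtains the corollary directly by taking characters of the decomposition in Theorem~\ref{theorem:main}: it applies Kashiwara's formula $\sum_{T\in\mathcal{B}_{\tau}(\nu^{\dagger})}\mathbf{x}^{\wt(T)}=\kappa_{\tau.\nu^{\dagger}}$ to each component, sums over $(\alpha,\beta)$, and then specializes $\mathbf{t},\mathbf{w}$. Your additional check that $Q\mapsto\hat{Q}$ is injective for fixed $(\alpha,\beta)$ uses the unique highest weight element of $\mathcal{A}(\hat{Q},[\lambda/\mu]^{\alpha}_{\beta},\Phi^{\beta})$ together with the injectivity of $\Theta$; this makes explicit a point the paper leaves implicit in how Theorem~\ref{theorem:main} is indexed.
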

\begin{example}
    Let $\lambda=(2,2), \mu=(1,0)$ and $\Phi=(1,2)$. Then the highest weight elements of $\SVRPP(\lambda/\mu, \Phi)$ (see \cite[Appendix~A]{Hybrid}) are the following:
$$ P =\ytableausetup{mathmode,
notabloids}
\begin{ytableau}
    \none &1\\ 1 &\textcolor{red}{1}   
\end{ytableau}
\hspace{0.5 cm}
Q =\ytableausetup{mathmode,
notabloids}
\begin{ytableau}
 \none &1\\1&2
\end{ytableau}
\hspace{0.5 cm}
R =\ytableausetup{mathmode,
notabloids}
\begin{ytableau}
     \none &1\\1 &\textcolor{red}{1},2  
\end{ytableau}
\hspace{0.5 cm}
S =\ytableausetup{mathmode,
notabloids}
\begin{ytableau}
    \none &1\\1,2 &2   
\end{ytableau}.
$$
Then we have
$$ 
\Tilde{P} =\ytableausetup{mathmode,
notabloids}
\begin{ytableau}
     \none &1\\ 1    
\end{ytableau}
\hspace{0.5 cm}
\Tilde{Q} =\ytableausetup{mathmode,
notabloids}
\begin{ytableau}
\none & \none &1\\1&2   
\end{ytableau}
\hspace{0.5 cm}
\Tilde{R} =\ytableausetup{mathmode,
notabloids}
\begin{ytableau}
    \none &1\\1  \\ 2  
\end{ytableau}
\hspace{0.5 cm}
\Tilde{S} =\ytableausetup{mathmode,
notabloids}
\begin{ytableau}
    \none & \none &1\\1 &2 \\ 2 
\end{ytableau}.
$$
It is clear that $\Tilde{P},\Tilde{Q}$ respects the flag $\Phi^{(0,0)}=(1,2)$ and  $\Tilde{R},\Tilde{S}$ respects the flag $\Phi^{(0,1)}=(1,2,2)$. Now
$$(
\begin{bmatrix}
    \mathbf{b}\Big([\lambda/\mu]^{(1,0)}_{(0,0)}\Big) \\
    \Read(\Tilde{P}) \\
\end{bmatrix} \rightarrow \emptyset)
=(\ytableausetup{mathmode,
notabloids}
  \begin{ytableau}
    1&1   
  \end{ytableau},\ytableausetup{mathmode,
notabloids}
  \begin{ytableau}
    1&2   
  \end{ytableau} ),
  \hspace{0.5 cm}
(
\begin{bmatrix}
    \mathbf{b}\Big([\lambda/\mu]^{(0,0)}_{(0,0)}\Big) \\
    \Read(\Tilde{Q}) \\
\end{bmatrix} \rightarrow \emptyset)
=(\ytableausetup{mathmode,
notabloids}
  \begin{ytableau}
    1&1\\2   
  \end{ytableau},\ytableausetup{mathmode,
notabloids}
  \begin{ytableau}
    1&2\\2
  \end{ytableau} ), 
$$
$$
(
\begin{bmatrix}
     \mathbf{b}\Big([\lambda/\mu]^{(1,0)}_{(0,1)}\Big) \\
    \Read(\Tilde{R}) \\       
\end{bmatrix} \rightarrow \emptyset)
=(\ytableausetup{mathmode,
notabloids}
  \begin{ytableau}
    1&1\\2   
  \end{ytableau},\ytableausetup{mathmode,
notabloids}
  \begin{ytableau}
    1&2\\3   
  \end{ytableau} ),
\hspace{0.5 cm}  
(
\begin{bmatrix}
    \mathbf{b}\Big([\lambda/\mu]^{(0,0)}_{(0,1)}\Big) \\
    \Read(\Tilde{S}) \\
\end{bmatrix} \rightarrow \emptyset)
=(\ytableausetup{mathmode,
notabloids}
  \begin{ytableau}
    1&1\\2&2  
  \end{ytableau},\ytableausetup{mathmode,
notabloids}
  \begin{ytableau}
    1&2\\2 &3
  \end{ytableau} ).
$$
Then the set of all $(\lambda/\mu,\Phi)$-compatible tableaux for SVRPP contains the following tableaux:
$$ \hat{P} =\ytableausetup{mathmode,
notabloids}
\begin{ytableau}
     1 &2  
\end{ytableau}
\text{ with } \overline{\ceq}(\hat{P})=(1,0), \overline{\ex}(\hat{P})=(0,0),$$
$$
\hat{Q} =\ytableausetup{mathmode,
notabloids}
\begin{ytableau}
    1&2 \\ 2   
\end{ytableau}
\text{ with }\overline{\ceq}(\hat{Q})=(0,0), \overline{\ex}(\hat{Q})=(0,0),
$$ 
$$
\hat{R} =\ytableausetup{mathmode,
notabloids}
\begin{ytableau}
    1&2 \\3   
\end{ytableau}
\text{ with } \overline{\ceq}(\hat{R})=(1,0), \overline{\ex}(\hat{R})=(0,1),$$
$$
\hat{S} =\ytableausetup{mathmode,
notabloids}
\begin{ytableau}
    1&2 \\2&3  
\end{ytableau}
\text{ with }\overline{\ceq}(\hat{S})=(0,0), \overline{\ex}(\hat{S})=(0,1).
$$
Then the corresponding left-key tableaux (using \cite{Willis:left-key}, \cite{Mrigendra:left-key}) are the following:
$$ K_{\_}(\hat{P}) =\ytableausetup{mathmode,
notabloids}
\begin{ytableau}
     1 &1   
\end{ytableau}
\text{ with } \beta(\hat{P})=(2,0),
\hspace{0.5 cm}
K_{\_}(\hat{Q}) =\ytableausetup{mathmode,
notabloids}
\begin{ytableau}
    1&2 \\ 2  
\end{ytableau}
\text{ with } \beta(\hat{Q})=(1,2),
$$ 
$$
K_{\_}(\hat{R}) =\ytableausetup{mathmode,
notabloids}
\begin{ytableau}
    1&1 \\3   
\end{ytableau}
\text{ with } \beta(\hat{R})=(2,0,1),
\hspace{0.5 cm}
K_{\_}(\hat{S}) =\ytableausetup{mathmode,
notabloids}
\begin{ytableau}
    1&1 \\2&2  
\end{ytableau}
\text{ with } \beta(\hat{S})=(2,2).
$$
Now using Theorem~\ref{theorem:RS-hat} we have the following:
$$\beta(\hat{P})=(2,0), \Phi^{(0,0)}=(1,2) \implies \widehat{\beta(\hat{P})}=(2,0);$$
$$\beta(\hat{Q})=(1,2), \Phi^{(0,0)}=(1,2) \implies \widehat{\beta(\hat{Q})}=(1,2);$$ 
$$\beta(\hat{R})=(2,0,1), \Phi^{(0,1)}=(1,2,2) \implies \widehat{\beta(\hat{R})}=(2,1);$$
$$\beta(\hat{S})=(2,2,0), \Phi^{(0,1)} =(1,2,2) \implies\widehat{\beta(\hat{S})}=(2,2).$$
Therefore, we have the following expansion
$$  H_{\lambda/\mu}(\mathbf{x}_{\Phi};\mathbf{t};\mathbf{w})=t_1\kappa_{(2,0)} + \kappa_{(1,2)} +t_1w_2 \kappa_{(2,1)} + w_2 \kappa_{(2,2)}.$$
\end{example}
\begin{corollary}
\label{corollary:schur-SVT}
Let $\lambda/\mu$ $(\lambda, \mu \in \mathcal{P}_n)$ be a skew shape and $\Phi= (\underbrace{n,n,\dots,n}_{n})$. Then 
$$ H_{\lambda/\mu}(\characx;\mathbf{t};\mathbf{w})=\displaystyle\sum_{ \alpha ,\beta \in \mathbb{Z}^n _{+}}\mathbf{t}^{\alpha}\mathbf{w}^{\beta}\displaystyle\sum_{Q \in \CT^{\alpha}_{\beta} (\lambda/\mu,\Phi)}s_{\widehat{\beta(Q)}^{\dagger}}$$
$$ \hspace{2.7 cm} =\displaystyle\sum_{\alpha , \beta \in \mathbb{Z}^n _{+}}\mathbf{t}^{\alpha}\mathbf{w}^{\beta}\displaystyle\sum_{Q \in \CT^{\alpha}_{\beta} (\lambda/\mu,\Phi)}s_{\sh(Q)}.$$
\end{corollary}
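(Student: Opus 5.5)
This follows by specializing Corollary~\ref{corollary:main} to the full flag $\Phi=(n,\dots,n)$. With this flag, $\SVRPP(\lambda/\mu,\Phi)=\SVRPP_n(\lambda/\mu)$, so $H_{\lambda/\mu}(\characx_{\Phi};\mathbf{t};\mathbf{w})=H_{\lambda/\mu}(\characx;\mathbf{t};\mathbf{w})$. Corollary~\ref{corollary:main} then writes this as $\sum_{\alpha,\beta}\mathbf{t}^{\alpha}\mathbf{w}^{\beta}\sum_{Q}\kappa_{\widehat{\beta(Q)}}$. Two tasks remain: show that each key polynomial appearing is the Schur polynomial $s_{\widehat{\beta(Q)}^{\dagger}}$, and identify the partition $\widehat{\beta(Q)}^{\dagger}$ with $\sh(Q)$.

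For the first identity, I would argue at the level of crystals through Theorem~\ref{theorem:main}. The flag $\Phi^{\beta}$ attached to $\Phi=(n,\dots,n)$ is constant (all entries equal to $n$), so $\Tab([\lambda/\mu]^{\alpha}_{\beta},\Phi^{\beta})$ is the full set of skew tableaux with entries in $[n]$. This set is closed under all $f_i$, so every connected component is a full highest weight crystal $\Tab_n(\nu)$. The bijection $\Theta$ of Proposition~\ref{Proposition:main} commutes with the $f_i$ and $e_i$. Hence each component $\SVRPP^{\alpha}_{\beta}(\lambda/\mu,\Phi;Q)$ is isomorphic to $\Tab_n(\nu)$, and in particular is closed under all $f_i$. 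Consequently the Demazure crystal $\mathcal{B}_{\tau}(\widehat{\beta(Q)}^{\dagger})$ attached to $Q$ is the whole crystal $\mathcal{B}_{w_0}(\widehat{\beta(Q)}^{\dagger})$. Its character is $\kappa_{w_0.\widehat{\beta(Q)}^{\dagger}}=s_{\widehat{\beta(Q)}^{\dagger}}(x_1,\dots,x_n)$, by the standard fact that $D_{w_0}\characx^{\nu}=s_\nu$. Taking characters componentwise therefore gives the first line of the corollary.

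For the second identity, I would compare shapes. The highest weight of the component through $Q$ is $\wt(Q)$, and by Proposition~\ref{proposition:tableau} this equals the partition $\widehat{\beta(Q)}^{\dagger}$. On the other hand, by Definition~\ref{def:comp}, the column insertion of $\Read(\Tilde{Q})$ produces the pair $(\rect(\Tilde{Q}),\hat{Q})$ with $\sh(\hat{Q})=\sh(\rect(\Tilde{Q}))$. Because $\Read(\Tilde{Q})=\Read(Q)$ is Yamanouchi (Proposition~\ref{Prop:Yama}), $\rect(\Tilde{Q})$ is the highest weight tableau $T_\nu$, whose shape equals its weight $\wt(Q)$. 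So $\sh(\hat{Q})=\wt(Q)=\widehat{\beta(Q)}^{\dagger}$. Reading the corollary's $s_{\sh(Q)}$ as the shape of the compatible tableau indexing the sum, this gives the second line.

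The main obstacle is the step where the Demazure crystal becomes the full crystal. One must verify that the flag $\Phi^{\beta}$ constructed from the constant flag really imposes no restriction, including on entries of $H$ omitted from the reading word. The surjectivity argument in Proposition~\ref{Proposition:main} already handles these omitted entries, and with every bound equal to $n$ the check is immediate. Alternatively, one can note that $\beta(Q)\mapsto\widehat{\beta(Q)}$ via Theorem~\ref{theorem:RS-hat} and that, for the constant flag, the associated key is antidominant, so $\kappa_{\widehat{\beta(Q)}}$ is symmetric and equals $s_{\widehat{\beta(Q)}^{\dagger}}$.
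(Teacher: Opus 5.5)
Your argument is correct and matches how the paper obtains this corollary. The paper gives no separate proof: the corollary is the specialization of Corollary~\ref{corollary:main} (via Theorem~\ref{theorem:main}) to the constant flag, where each Demazure crystal in the decomposition is the full highest weight crystal, so every $\kappa_{\widehat{\beta(Q)}}$ equals $s_{\widehat{\beta(Q)}^{\dagger}}$. For the second line your Yamanouchi/rectification argument works, but $\widehat{\beta(Q)}^{\dagger}=\sh(Q)$ holds for any flag more directly: $\beta(Q)$ is the weight of the key $K_{\_}(Q)$ of shape $\sh(Q)$, hence a rearrangement of $\sh(Q)$, and $\widehat{\beta}^{\dagger}=\beta^{\dagger}$ by Theorem~\ref{theorem:RS-hat}.
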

\section{Applications}
\label{Section 4}
In this section, we expand the hybrid Grothendieck polynomial $H_{\lambda / \mu}(\characx;\mathbf{t};\mathbf{w})$ into two distinct bases using methods from \cite{Morse-Jason:K-bases}:
\begin{enumerate}
    \item the stable Grothendieck polynomials $G_{\nu}(\mathbf{x})$,
    \item the dual stable Grothendieck polynomials $g_{\nu}(\characx)$.
\end{enumerate}
Analogous expansions have been obtained for the canonical Grothendieck polynomials in \cite{Anne:un-hook}, as well as for the row-refined skew stable Grothendieck polynomials, the refined dual stable Grothendieck polynomials, and the Schur P-functions in \cite[\S5]{Sidhu:SVT}.
\begin{definition}
    \begin{upshape}
        \cite{Morse-Jason:K-bases}
    \end{upshape}
A symmetric function, $f_{\gamma}$ has a \emph{tableaux Schur expansion} if there exist a set of semi-standard Young tableaux $\mathbb{T}_\gamma$ and a weight function $\wt_{\gamma}$ such that 
\begin{equation}
    \label{eq:X}
    f_{\gamma}=\displaystyle\sum_{T \in \mathbb{T}(\gamma)} \wt_{\gamma}(T)s_{\sh(T)}(\mathbf{x}).
\end{equation}
\end{definition}
Given $\mathbb{T}(\gamma)$, we define the corresponding sets of semi-standard set-valued tableaux $\mathbb{S}(\gamma)$ and reverse plane partitions $\mathbb{R}(\gamma)$ (of partition shapes) by requiring that their reading words rectify to elements of $\mathbb{T}(\gamma)$, that is,
$$ \mathbb{S}(\gamma)=\{ S: \rect(\Read(S)) \in \mathbb{T}(\gamma) \} ,$$
$$ \mathbb{R}(\gamma)=\{ R: \rect(\Read(R)) \in \mathbb{T}(\gamma) \} .$$
Similarly, we extend $\wt_{\gamma}$ to both $\mathbb{S}(\gamma)$ and $\mathbb{R}(\gamma)$ by setting $\wt_{\gamma}(X):=\wt_{\gamma}(\rect(\Read(X)))$.
\begin{theorem}\begin{upshape}
    \cite[Theorem 3.5]{Morse-Jason:K-bases}
\end{upshape}
\label{theorem:K bases}
Assuming the expansion of $f_{\gamma}$ in \eqref{eq:X}, 
we have $$f_{\gamma} = \displaystyle\sum_{R \in \mathbb{R}(\gamma)} \wt_{\gamma}(R)G_{\sh(R)}(\mathbf{x}),$$
$$f_{\gamma} = \displaystyle\sum_{S \in \mathbb{S}(\gamma)} (-1)^{|\ex(S)|}\wt_{\gamma}(S) g_{\sh(S)}(\mathbf{x}).$$
\end{theorem}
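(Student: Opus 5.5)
We would reduce the statement to the case of a single Schur function and then use the Hall-duality $\langle G_\lambda, g_\nu\rangle=\delta_{\lambda\nu}$ together with the crystal structures on reverse plane partitions and on set-valued tableaux. Since both sides of each claimed identity are linear in the data $(\mathbb{T}(\gamma),\wt_\gamma)$, and $\mathbb{R}(\gamma)$, $\mathbb{S}(\gamma)$ are defined fibrewise over $\mathbb{T}(\gamma)$ via $X\mapsto \rect(\Read(X))$, it suffices to show the following. For every semi-standard Young tableau $T$ of straight shape $\nu$,
$$s_\nu=\sum_{R:\ \rect(\Read(R))=T} G_{\sh(R)}(\mathbf{x}), \qquad s_\nu=\sum_{S:\ \rect(\Read(S))=T}(-1)^{|\ex(S)|}\, g_{\sh(S)}(\mathbf{x}),$$
where $R$ ranges over reverse plane partitions and $S$ over semi-standard set-valued tableaux of straight shape. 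Multiplying by $\wt_\gamma(T)$ and summing over $T\in\mathbb{T}(\gamma)$ then gives both formulas of the theorem.

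For the first identity, we expand $s_\nu$ in the basis $\{G_\lambda\}$; the coefficient of $G_\lambda$ is $\langle s_\nu, g_\lambda\rangle$. We would compute the Schur expansion of $g_\lambda=H_{\lambda}(\mathbf{x};\mathbf{1};\mathbf{0})$ from the crystal on $\SVRPP_n(\lambda)$ restricted to reverse plane partitions, i.e.\ Galashin's crystal. By Remark~\ref{remark:crystal} the map $R\mapsto\Read(R)$ is a crystal embedding onto a full subcrystal of words, so every connected component is isomorphic to some $\mathcal{B}(\nu')$. By Proposition~\ref{Prop:Yama}, its highest weight element is the unique $R$ in the component with Yamanouchi reading word. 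Hence $g_\lambda=\sum_{R}s_{\wt(R)}$, the sum over highest weight RPP $R$ of shape $\lambda$, and so $\langle s_\nu,g_\lambda\rangle$ is the number of components of type $\nu$. Inside such a component, $R\mapsto \rect(\Read(R))$ commutes with $e_i,f_i$ (rectification is a crystal isomorphism on words, e.g.\ by Proposition~\ref{Prop:RS}) and is a bijection onto $\Tab_n(\nu)$. So each component of type $\nu$ contains exactly one $R$ with $\rect(\Read(R))=T$, while components of other types contain none. The coefficient is therefore $\#\{R \text{ of shape } \lambda:\rect(\Read(R))=T\}$, independent of the choice of $T$ of shape $\nu$, as required.

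The second identity is proved the same way. Here we use the crystal on semi-standard set-valued tableaux (the case $\alpha=\mathbf{0}$ of \S\ref{def:crystal-SVRPP}), which preserves $\ex$ by Remark~\ref{remark:stat}. It gives $G_\lambda=\sum_S(-1)^{|\ex(S)|}s_{\wt(S)}$, the sum over highest weight $S$ of shape $\lambda$. Taking $\langle s_\nu, G_\lambda\rangle$ as the coefficient of $g_\lambda$ and counting one representative per component as before yields the stated signed count.

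The main obstacle is justifying that rectification of the reading word restricts to a crystal isomorphism from each component onto $\mathcal{B}(\nu)$. This requires that $\Read$ intertwines the operators exactly (Remark~\ref{remark:crystal}) and that Knuth equivalence commutes with the word crystal operators. We would also need to check that the sign conventions and the choice of $n$ large enough are consistent when passing to symmetric functions in infinitely many variables.
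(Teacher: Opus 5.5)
Your argument is correct. The paper itself gives no proof of this statement; it imports it from Bandlow--Morse \cite{Morse-Jason:K-bases}. So what you offer is a self-contained derivation inside the paper's own framework, not a variant of an argument in the text.

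After the (valid) fibrewise reduction to a single $T$ of shape $\nu$, you read the coefficients $\langle s_\nu,g_\lambda\rangle$ and $\langle s_\nu,G_\lambda\rangle$ off the highest weight elements of the crystals on $\bigsqcup_\alpha\SVRPP^{\alpha}_{\mathbf{0}}(\lambda,n)$ and $\bigsqcup_\beta\SVRPP^{\mathbf{0}}_{\beta}(\lambda,n)$. The sign is constant on components because $\ex$ is. You then select one element per component.

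The step you flag as the main obstacle is already Proposition~\ref{Prop:RS}. Crystal operators change only the insertion tableau, so each component of $\mathcal{W}_n^m$ is a fibre of the recording tableau, and $\rect\circ\Read$ maps it bijectively onto $\Tab_n(\nu)$.

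The bookkeeping in $n$ is also harmless. Every entry of $R$ or $S$ occurs in its reading word, so it is at most $\max T$. Hence for fixed $\lambda$ and $T$ any $n\ge\max(\ell(\lambda),\max T)$ works, and each coefficient is a finite count.

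Your specialization $g_\lambda=H_\lambda(\mathbf{x};\mathbf{1};\mathbf{0})$ is the right one. The list in \S2 writes $\mathbf{t}=\mathbf{0}$, which would give $s_\lambda$.

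As for what each route buys: yours needs only Hall duality of $G$ and $g$ in the completed ring, plus the full-subcrystal property of these reading words, which the paper already assumes from Guo--Kang--Liu. In exchange, the independence of the counts from the choice of $T$ comes for free. Citing Bandlow--Morse keeps the result independent of those later crystal constructions.
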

For $\alpha , \beta \in \mathbb{Z}^n_{+}$, we define $\mathbb{T}^{\alpha}_{\beta}(\lambda/\mu):=\CT^{\alpha}_{\beta}(\lambda/\mu, \Phi)$, where $\Phi= (\underbrace{n,n,\dots,n}_{n})$
and for all $P \in \mathbb{T}^{\alpha}_{\beta}(\lambda/\mu)$, define $\wt^{\alpha,\beta}_{\lambda/\mu}(P):=\mathbf{t}^{\alpha}\mathbf{w}^{\beta}$. Then Corollary~\ref{corollary:schur-SVT} gives the following tableau Schur expansion of $H_{\lambda/\mu}(\characx;\mathbf{t};\mathbf{w})$
\begin{equation}
\label{set value:eq}
 H_{\lambda/\mu}(\characx;\mathbf{t};\mathbf{w})=\displaystyle\sum_{\alpha , \beta \in \mathbb{Z}^{n} _{+}}\displaystyle\sum_{P \in \mathbb{T}^{\alpha}_{\beta}(\lambda/\mu)} \wt^{\alpha,\beta}_{\lambda/\mu}(P) s_{\sh(P)}.
\end{equation}
So by Theorem~\ref{theorem:K bases}, we have
$$H_{\lambda/\mu}(\characx;\mathbf{t};\mathbf{w})= \displaystyle\sum_{{\alpha , \beta \in \mathbb{Z}^{n} _{+}}} \displaystyle \sum_{R \in \mathbb{R}^{\alpha}_{\beta}(\lambda/\mu)} \wt^{\alpha,\beta}_{\lambda/\mu}(R)G_{\sh(R)}(\characx)$$
$$\hspace{2.9 cm}=\displaystyle\sum_{{\alpha , \beta \in \mathbb{Z}^{n} _{+}}}\displaystyle \sum_{S \in \mathbb{S}^{\alpha}_{\beta}(\lambda/\mu)} (-1)^{|\ex(S)|} \wt^{\alpha,\beta}_{\lambda/\mu}(S)g_{\sh(S)}(\characx),$$
where $\mathbb{R}^{\alpha}_{\beta}(\lambda/\mu)$ and $\mathbb{S}^{\alpha}_{\beta}(\lambda/\mu)$ denote the sets of reverse plane partitions and semi-standard set-valued tableaux, respectively, whose reading words are Knuth-equivalent to an element of $\mathbb{T}^{\alpha}_{\beta}(\lambda/\mu)$.
\begin{example}
Let $\lambda=(2,2), \mu=(1)$. Then the highest weight elements of $\SVRPP_2(\lambda/\mu)$ (see \cite[Appendix~A]{Hybrid}) are the following:
$$
P_1=\ytableausetup{mathmode,
notabloids}
\begin{ytableau}
    \none &1 \\ 1 &\textcolor{red}{1}   
\end{ytableau}
  \hspace{0.5 cm}
P_2=\ytableausetup{mathmode,
notabloids}
\begin{ytableau}
    \none &1\\1 &2   
\end{ytableau}
  \hspace{0.5 cm}
P_3=\ytableausetup{mathmode,
notabloids}
\begin{ytableau}
    \none &1\\1 &\textcolor{red}{1},2   
\end{ytableau}
  \hspace{0.5 cm}
P_4=\ytableausetup{mathmode,
notabloids}
\begin{ytableau}
    \none &1\\1,2 &2   
\end{ytableau}
  \hspace{0.5 cm}
P_5=\ytableausetup{mathmode,
notabloids}
\begin{ytableau}
    \none &1,2\\1 &\textcolor{red}{2}   
\end{ytableau}
  \hspace{0.5 cm}
P_6=\ytableausetup{mathmode,
notabloids}
\begin{ytableau}
    \none &1,2\\1,2 &\textcolor{red}{2}   
\end{ytableau}.
$$
Then we obtain
$$ \Tilde{P_1}=\ytableausetup{mathmode,
notabloids}
\begin{ytableau}
    \none &1 \\ 1   
\end{ytableau}
  \hspace{0.3 cm}
\Tilde{P_2} =\ytableausetup{mathmode,
notabloids}
\begin{ytableau}
    \none & \none &1\\1 &2   
\end{ytableau}
  \hspace{0.3 cm}
\Tilde{P_3} =\ytableausetup{mathmode,
notabloids}
\begin{ytableau}
    \none &1\\1 \\ 2   
\end{ytableau}
\hspace{0.3 cm}
\Tilde{P_4} =\ytableausetup{mathmode,
notabloids}
\begin{ytableau}
   \none & \none &1\\1 &2 \\ 2   
\end{ytableau}
\hspace{0.3 cm}
\Tilde{P_5} =\ytableausetup{mathmode,
notabloids}
\begin{ytableau}
  \none &1 \\  \none &2 \\1
\end{ytableau}
\hspace{0.3 cm}
\Tilde{P_6} =\ytableausetup{mathmode,
notabloids}
\begin{ytableau}
  \none  & 1\\ \none &2\\1  \\ 2   
\end{ytableau}.
$$
Now the column insertion (Theorem~\ref{Theorem:Burge}) gives
$$(
\begin{bmatrix}
    \mathbf{b}\Big([\lambda/\mu]^{(1,0)}_{(0,0)}\Big) \\
    \Read(\Tilde{P_1}) \\
\end{bmatrix}
\rightarrow \emptyset)
=(\ytableausetup{mathmode,
notabloids}
  \begin{ytableau}
    1&1   
  \end{ytableau},\ytableausetup{mathmode,
notabloids}
  \begin{ytableau}
    1&2  
  \end{ytableau} ), 
\hspace{0.5 cm}
(
\begin{bmatrix}
    \mathbf{b}\Big([\lambda/\mu]^{(0,0)}_{(0,0)}\Big) \\
    \Read(\Tilde{P_2}) \\
\end{bmatrix}
\rightarrow \emptyset)
=(\ytableausetup{mathmode,
notabloids}
  \begin{ytableau}
    1&1\\2   
  \end{ytableau},\ytableausetup{mathmode,
notabloids}
  \begin{ytableau}
    1&2\\2   
  \end{ytableau} ),$$
$$(
\begin{bmatrix}
    \mathbf{b}\Big([\lambda/\mu]^{(1,0)}_{(0,1)}\Big) \\
    \Read(\Tilde{P_3}) \\
\end{bmatrix}
\rightarrow \emptyset)
=(\ytableausetup{mathmode,
notabloids}
  \begin{ytableau}
    1&1\\2   
  \end{ytableau},\ytableausetup{mathmode,
notabloids}
  \begin{ytableau}
    1&2\\3   
  \end{ytableau} ), 
\hspace{0.5 cm}
(
\begin{bmatrix}
    \mathbf{b}\Big([\lambda/\mu]^{(0,0)}_{(0,1)}\Big) \\
    \Read(\Tilde{P_4}) \\
\end{bmatrix}
\rightarrow \emptyset)
=(\ytableausetup{mathmode,
notabloids}
  \begin{ytableau}
    1&1\\2&2   
  \end{ytableau},\ytableausetup{mathmode,
notabloids}
  \begin{ytableau}
    1&2\\2&3   
  \end{ytableau} ),$$
$$(
\begin{bmatrix}
    \mathbf{b}\Big([\lambda/\mu]^{(1,0)}_{(1,0)}\Big) \\
    \Read(\Tilde{P_5}) \\
\end{bmatrix}
\rightarrow \emptyset)
=(\ytableausetup{mathmode,
notabloids}
  \begin{ytableau}
    1&1\\2   
  \end{ytableau},\ytableausetup{mathmode,
notabloids}
  \begin{ytableau}
    1&3\\2   
  \end{ytableau} ), 
\hspace{0.5 cm}
(
\begin{bmatrix}
    \mathbf{b}\Big([\lambda/\mu]^{(1,0)}_{(1,1)}\Big) \\
    \Read(\Tilde{P_6}) \\
\end{bmatrix}
\rightarrow \emptyset)
=(\ytableausetup{mathmode,
notabloids}
  \begin{ytableau}
    1&1\\2 &2  
  \end{ytableau},\ytableausetup{mathmode,
notabloids}
\begin{ytableau}
    1&3\\2 &4  
\end{ytableau} ).$$ 
Therefore, we get
$$ 
\mathbb{T}^{(1,0)}_{(0,0)}(\lambda/\mu)=\{
\ytableausetup{mathmode,
notabloids}
  \begin{ytableau}
    1&2   
  \end{ytableau}
\}, \quad 
 \mathbb{T}^{(0,0)}_{(0,0)}(\lambda/\mu)=\{
\ytableausetup{mathmode,
notabloids}
  \begin{ytableau}
    1&2 \\2  
  \end{ytableau}
\}, \quad 
\mathbb{T}^{(1,0)}_{(0,1)}(\lambda/\mu)=\{
\ytableausetup{mathmode,
notabloids}
  \begin{ytableau}
    1&2 \\3  
  \end{ytableau}
\},
$$
$$
\mathbb{T}^{(0,0)}_{(0,1)}(\lambda/\mu)=\{
\ytableausetup{mathmode,
notabloids}
  \begin{ytableau}
    1&2 \\2&3  
  \end{ytableau}
\}, \quad 
\mathbb{T}^{(1,0)}_{(1,0)}(\lambda/\mu)=\{
\ytableausetup{mathmode,
notabloids}
  \begin{ytableau}
    1&3\\2   
  \end{ytableau}
\}, \quad 
\mathbb{T}^{(1,0)}_{(1,1)}(\lambda/\mu)=\{
\ytableausetup{mathmode,
notabloids}
  \begin{ytableau}
    1&3\\2&4   
  \end{ytableau}
\}.
$$
Thus we have
$$ \mathbb{R}^{(1,0)}_{(0,0)}(\lambda/\mu) =
\{
\begin{ytableau}
   1&2   
\end{ytableau},
\begin{ytableau}
   1&2\\ \textcolor{red}{1}   
\end{ytableau},
\begin{ytableau}
   1&2\\ \textcolor{red}{1}& \textcolor{red}{2}   
\end{ytableau},\dots
 \}, \quad
\mathbb{R}^{(0,0)}_{(0,0)}(\lambda/\mu)
=\{
\begin{ytableau}
   1&2\\2   
  \end{ytableau},
\begin{ytableau}
   1&2 \\ \textcolor{red}{1} \\2   
  \end{ytableau},
\begin{ytableau}
   1&2\\2\\\textcolor{red}{2}  
\end{ytableau},\dots  
\},$$
$$ \mathbb{R}^{(1,0)}_{(0,1)}(\lambda/\mu) =
\{
\begin{ytableau}
   1&2\\3   
\end{ytableau},
\begin{ytableau}
   1&2\\ \textcolor{red}{1} \\3   
\end{ytableau},
\begin{ytableau}
   1&2\\3\\ \textcolor{red}{3}   
\end{ytableau},\dots
 \}, \quad
\mathbb{R}^{(0,0)}_{(0,1)}(\lambda/\mu)
=\{
\begin{ytableau}
   1&2\\2&3   
\end{ytableau},
\begin{ytableau}
   1&2\\ \textcolor{red}{1} &3\\2   
\end{ytableau},
\begin{ytableau}
   1&2\\2&3\\ \textcolor{red}{2}  
\end{ytableau},\dots  
\},$$
$$
\mathbb{R}^{(1,0)}_{(1,0)}(\lambda/\mu) =
\{
\begin{ytableau}
   1&3\\2   
\end{ytableau},
\begin{ytableau}
   1&3\\ \textcolor{red}{1} \\2   
\end{ytableau},
\begin{ytableau}
   1&3\\2\\ \textcolor{red}{2}   
\end{ytableau},\dots
 \}, \quad
\mathbb{R}^{(1,0)}_{(1,1)}(\lambda/\mu)
=\{
\begin{ytableau}
   1&3\\2&4   
  \end{ytableau},
\begin{ytableau}
   1&3\\ \textcolor{red}{1} &4\\2   
  \end{ytableau},
\begin{ytableau}
   1&3\\2&4\\ \textcolor{red}{2}   
\end{ytableau},\dots  
\}.$$
Therefore we obtain

\noindent$H_{\lambda/\mu}(\characx;\mathbf{t};\mathbf{w})=t_1\Big(G_{(2)}(\characx) + G_{(2,1)}(\characx)+G_{(2,2)}(\characx)+\cdots\Big)+ \Big(G_{(2,1)}(\characx) + 2G_{(2,1,1)}(\characx) + \cdots \Big) + t_1w_2\Big(G_{(2,1)}(\characx) +2 G_{(2,1,1)}(\characx) + \cdots\Big)+ w_2\Big(G_{(2,2)}(\characx)+2G_{(2,2,1)} +\cdots \Big) +
t_1w_1 \Big(G_{(2,1)}(\characx)+2G_{(2,1,1)} +\cdots \Big)+ t_1w_1w_2\Big(G_{(2,2)}(\characx)+2G_{(2,2,1)} +\cdots\Big)
$.

\noindent Also, we have
$$
\mathbb{S}^{(1,0)}_{(0,0)}(\lambda/\mu) =
\{
\begin{ytableau}
    1&2   
\end{ytableau}
\}, \quad 
\mathbb{S}^{(0,0)}_{(0,0)}(\lambda/\mu) =
\{\begin{ytableau}
    1&2 \\2  
\end{ytableau},
\begin{ytableau}
     1,2 &2   
\end{ytableau}
\},  
$$
$$
\mathbb{S}^{(1,0)}_{(0,1)}(\lambda/\mu) =
\{\begin{ytableau}
    1&2\\3   
\end{ytableau},
\begin{ytableau}
     1 &2,3   
\end{ytableau}
  \}, \quad
\mathbb{S}^{(0,0)}_{(0,1)}(\lambda/\mu) =
\{\begin{ytableau}
    1&2 \\2&3  
\end{ytableau},
\begin{ytableau}
     1&2,3 \\2   
\end{ytableau}
 \}, 
$$
$$
\mathbb{S}^{(1,0)}_{(1,0)}(\lambda/\mu) =
\{\begin{ytableau}
    1&3\\2   
\end{ytableau},
\begin{ytableau}
     1,2 &3   
\end{ytableau}
\}, \quad 
\mathbb{S}^{(1,0)}_{(1,1)}(\lambda/\mu) =
\{
\begin{ytableau}
    1&3\\2&4   
\end{ytableau},
\begin{ytableau}
     1 &3,4\\2   
\end{ytableau}
\}.  
$$
Hence we get

\noindent$H_{\lambda/\mu}(\characx;\mathbf{t};\mathbf{w})=t_1
g_{(2)}(\characx)+\Big(g_{(2,1)}(\characx)-g_{(2)}(\characx) \Big)+ t_1w_2\Big( g_{(2,1)}(\characx) -g_{(2)}(\characx) \Big) + w_2\Big( g_{(2,2)}(\characx) -g_{(2,1)}(\characx) \Big) +t_1w_1\Big( g_{(2,1)}(\characx) -g_{(2)}(\characx) \Big) +t_1w_1w_2\Big( g_{(2,2)}(\characx) -g_{(2,1)}(\characx) \Big) $.
\end{example}

\end{document}